\documentclass[12pt,notitlepage]{amsart}
\usepackage{latexsym,amsfonts,amssymb,amsmath,amsthm, datetime2}
\usepackage{graphicx}
\usepackage{color}
\usepackage{mathrsfs}
\usepackage{amsmath, amsthm, amsfonts, amssymb}
\usepackage{wrapfig}
\usepackage{stackrel}
\usepackage{color}
\usepackage{float}
\usepackage{enumitem}
\usepackage{mathtools}
\usepackage{multicol}
\usepackage[normalem]{ulem}
\usepackage{xcolor}
\usepackage{hyperref}
\usepackage{cancel}

\pagestyle{headings}
\usepackage[margin=1in]{geometry}

\usepackage[backend=biber, doi=false,isbn=false,url=false, articlein=false, maxnames=4, maxalphanames=4, sorting=nty, style=ext-alphabetic]{biblatex}
\DeclareFieldFormat[article,misc,inbook,incollection]{title}{#1}
\addbibresource{refs.bib} 

\DeclareLabelalphaTemplate{
  \labelelement{
    \field[final]{shorthand}
    \field{label}
    \field[strwidth=3,strside=left,ifnames=1]{labelname}
    \field[strwidth=1,strside=left]{labelname}
  }
  \labelelement{
    \field[strwidth=2,strside=right]{origyear}
    \field[strwidth=2,strside=right]{year}
  }
}

\newcommand{\mz}{\ensuremath{\mathbb Z}}
\newcommand{\mr}{\ensuremath{\mathbb R}}


\newcommand{\mymod}{\ensuremath{\negthickspace \negmedspace \pmod}}
\newcommand{\shortmod}{\ensuremath{\negthickspace \negthickspace \negthickspace \pmod}}

\newcommand{\intR}{\int_{-\infty}^{\infty}}

\newcommand{\sumstar}{\sideset{}{^*}\sum}

\DeclarePairedDelimiter{\abs}{\lvert}{\rvert}

\newcommand{\cond}{\mathrm{cond}}

\theoremstyle{plain}		
	\newtheorem{mytheo}{Theorem}[section]
	
	\newtheorem{myprop}[mytheo]{Proposition}
	\newtheorem{mycoro}[mytheo]{Corollary}
     \newtheorem{mylemma}[mytheo]{Lemma}
	\newtheorem{mydefi}[mytheo]{Definition}
	
	\newtheorem{myremark}[mytheo]{Remark}

\theoremstyle{remark}

\numberwithin{equation}{section}
\begin{document}
\author{Agniva Dasgupta}
\email{agniva@tamu.edu}
\author{Wing Hong Leung}
\email{josephleung@tamu.edu}
 \author{Matthew P. Young} 
  \email{myoung@math.tamu.edu}
 \address{Department of Mathematics \\
 	  Texas A\&M University \\
 	  College Station \\
 	  TX 77843-3368}
  \thanks{This material is based upon work supported by the National Science Foundation under agreement No. DMS-2302210 (M.Y.).  Any opinions, findings and conclusions or recommendations expressed in this material are those of the authors and do not necessarily reflect the views of the National Science Foundation.}

\title{The second moment of the $GL_3$ standard $L$-function on the critical line
}

\begin{abstract}
 We obtain a strong bound on the second moment of the $GL_3$ standard $L$-function on the critical line.  The method builds on the recent work of Aggarwal, Leung, and Munshi which treated shorter intervals.  We deduce some corollaries including an improvement on the error term in the Rankin-Selberg problem, and on certain subconvexity bounds for $GL_3 \times GL_2$ and $GL_3$ $L$-functions.
 As a byproduct of the method of proof, we also obtain an estimate for an average of shifted convolution sums of $GL_3$ Fourier coefficients.
 \end{abstract}

\maketitle

\section{Introduction}
\subsection{Statement of results} \label{subsec:statement}
Let $f$ be a Hecke cusp form for $SL_3(\mz)$, and let $L(f,s)$ denote the $L$-function associated to it. The main purpose of this paper is to establish the following estimate for the second moment.
\begin{mytheo}
\label{thm:mainthm}
 Let $T \geq 1$.  Then for any $\varepsilon>0$,
 \begin{equation}
 \label{eq:mainthm}
  \int_{-T}^{T} |L(f, 1/2+it)|^2 dt \ll_{f,\varepsilon} T^{4/3+\varepsilon}.
 \end{equation}
\end{mytheo}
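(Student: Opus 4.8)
The plan is to reduce \eqref{eq:mainthm}, via an approximate functional equation, to a mean value of Dirichlet polynomials, to peel off the diagonal term, and then to extract cancellation from the remaining off-diagonal by recognising it as an average of shifted convolution sums of the Fourier coefficients of $f$, which I would attack with the circle method together with two applications of $GL_3$ Voronoi summation. To begin, applying the approximate functional equation to $L(f,\tfrac12+it)$ and to $\overline{L(f,\tfrac12+it)}=L(\widetilde f,\tfrac12-it)$ and inserting a smooth dyadic partition of unity, it suffices to prove
\[
  \mathcal M(N):=\int_{\mr}\omega\!\left(\tfrac tT\right)\Bigl|\,\sum_{n}\frac{\lambda(n)}{\sqrt n}\,n^{-it}\,V\!\left(\tfrac nN\right)\Bigr|^{2}dt\ \ll_{f,\varepsilon}\ T^{4/3+\varepsilon}
\]
for every $N\ll T^{3/2+\varepsilon}$, where $\omega$ is a fixed smooth majorant of $\mathbf 1_{[-T,T]}$ supported in $[-2T,2T]$, $V$ a fixed bump function, and $\lambda(n)$ the Hecke eigenvalues. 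The ``dual $\times$ dual'' cross term from the functional equation carries an extra oscillation coming from the archimedean ratio of gamma factors, which I would first resolve by stationary phase before feeding the result into the same machinery; the genuinely hard range throughout is the balanced one, $N\asymp T^{3/2}$.

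Opening the square and carrying out the $t$-integral gives
\[
  \mathcal M(N)=T\sum_{m,n}\frac{\lambda(m)\overline{\lambda(n)}}{\sqrt{mn}}\,V\!\left(\tfrac mN\right)V\!\left(\tfrac nN\right)\widehat\omega\!\left(\tfrac{T}{2\pi}\log\tfrac mn\right)+O(T^{-100}).
\]
The diagonal $m=n$ contributes $\ll T\sum_{n\asymp N}|\lambda(n)|^{2}/n\ll_{f,\varepsilon}T^{1+\varepsilon}$ by the Rankin--Selberg bound. For the off-diagonal I would set $m=n+r$ and use the rapid decay of $\widehat\omega$ to restrict to $1\le|r|\le R:=N^{1+\varepsilon}T^{-1}$, so that what remains is at most $\tfrac TN$ times an average of shifted convolution sums; hence it is enough to show
\[
  \sum_{1\le|r|\le R}\Bigl|\,\sum_{n}\lambda(n+r)\,\overline{\lambda(n)}\,W\!\left(\tfrac nN\right)\Bigr|\ \ll_{f,\varepsilon}\ \frac NT\,T^{4/3+\varepsilon}
\]
for a fixed bump function $W$. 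When $N\asymp T^{3/2}$, so $R\asymp T^{1/2}$, this requires a saving of about $T^{1/6}$ on average in $r$ over the trivial bound $\ll N^{1+\varepsilon}$ for a single shift --- precisely the average shifted-convolution estimate announced in the abstract.

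To detect the shift I would use the DFI delta method,
\[
  \mathbf 1_{\{m-n-r=0\}}=\frac1{Q}\sum_{q\le Q}\frac1q\ \sumstar_{a\bmod q}\e{a(m-n-r)}{q}\int_{\mr}g(q,\xi)\,\e{(m-n-r)\xi}{qQ}\,d\xi,
\]
with $Q\asymp N^{1/2}$ (a $GL_2$-type delta symbol built from the Petersson/Kuznetsov formula, or a conductor-lowering variant, would be alternatives). This separates the $m$- and $n$-variables, and the size of $Q$ is conductor-matched: applying $GL_3$ Voronoi summation to each of $\sum_m\lambda(m)\e{am}{q}(\cdots)$ and $\sum_n\lambda(n)\e{-an}{q}(\cdots)$ turns a sum of length $N$ and analytic conductor $\asymp q^{3}$ into a dual sum of length $\asymp q^{3}/N\le N^{1/2}$, weighted by Kloosterman-type exponential sums and Bessel-type integral transforms. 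After Voronoi on both sides, the sum over $a\bmod q$ becomes a correlation of two such Kloosterman sums twisted by $\e{-ar}{q}$, which I would evaluate (or bound via Weil/Deligne) so that it collapses to an essentially one-dimensional character sum in $q$; the $r$-sum then localises the surviving moduli. I would then apply Cauchy--Schwarz in the outer variables --- one of the two dual lengths, together with $q$ and what remains of $r$ --- to remove the $GL_3$ coefficients, open the resulting square, and run Poisson summation (or one further Voronoi step) on the inner variable; a large-sieve input controls the diagonal of this Cauchy--Schwarz, while repeated stationary-phase analysis of the delta-method and Voronoi integrals shows its off-diagonal is negligible. Optimising the dyadic decompositions of $q$ and of the dual variables, and the parameter $Q$, should then produce the bound required in the previous paragraph.

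The main obstacle is this last step: arranging the delta method and the two $GL_3$ Voronoi summations so that, after Cauchy--Schwarz, the ``dual diagonal'' is genuinely small while the ``dual off-diagonal'' retains enough oscillation to be handled, and all of this uniformly in the shift $r$ and the modulus $q$. The $GL_3$ Voronoi transform produces Kloosterman-type sums together with oscillatory Bessel integrals whose stationary points must be located and estimated precisely, and because the target $T^{4/3}$ sits well below the convexity exponent $T^{3/2}$ but above Lindel\"of $T^{1}$ there is essentially no slack --- the choice $Q\asymp N^{1/2}$ must match the Voronoi-dualised length $q^{3}/N$, and the dual sums must exhibit square-root cancellation on average for the exponents to close. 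One must also separate the moduli $q$ for which Voronoi genuinely shortens the sums from those for which it does not, treat degenerate moduli (such as $q$ small, or $q\mid r$) apart from the generic ones, and dispose of the small-$N$ and transitional ranges, where the diagonal term already suffices.
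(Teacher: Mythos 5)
Your skeleton — approximate functional equation, peel off the diagonal via Rankin--Selberg, write $m=n+r$, DFI delta method, two $GL_3$ Voronoi summations, then Cauchy--Schwarz and a large-sieve-type input — is the right architecture, and it matches the paper's high-level strategy (which builds on Aggarwal--Leung--Munshi). But there is a gap in the single most important place: your choice of the delta-method modulus. You take $Q\asymp N^{1/2}$, ``conductor-matched'' so that the Voronoi-dual length is $\asymp q^3/N\le N^{1/2}$. Plugging that balanced choice into the final optimization gives only $T^{3/2}$, i.e.\ nothing beyond the mean value theorem. The paper deliberately takes the arithmetic modulus \emph{smaller} than balanced, $C=N^{1/2}T^{-1/6}$ with $Q\ll C$, so $Q^2=o(N)$. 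This raises the archimedean conductor of the delta-symbol weight to $N/Q^2\asymp T^{1/3}$, which would normally be a loss, but that extra oscillation is essentially free in the short shift variable $r$ (of length $\asymp N/T\asymp T^{1/2}$), while the arithmetic conductors (hence the dual lengths after Voronoi) drop. This deliberate \emph{im}balance, and the subsequent optimization of $C$, is exactly where the gain to $T^{4/3}$ comes from; a conductor-matched $Q$ forfeits it entirely.

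The second gap is in the character-sum analysis, which you leave at ``evaluate or bound via Weil/Deligne, then Cauchy--Schwarz, Poisson, and a large sieve.'' Several intermediate devices are needed to make a large sieve applicable at all, and they shape the whole arrangement. First, the order: the paper applies Poisson in the shift $k$ \emph{before} Voronoi, so the additive character over $a\bmod q$ is absorbed into a new frequency $k'$ of length $\asymp QT/N$, producing a single Kloosterman sum (not a correlation of two) per Voronoi application. The resulting quantity is then cast as a bilinear-form norm and dualized; Cauchy--Schwarz is hidden in the duality principle. After Poisson in the dual length variable, reciprocity converts $e_{c_1}(-c_2\overline{nk'})e_{c_2}(-c_1\overline{nk'})$ into $e_{nk'}(c_2\overline{c_1}+c_1\overline{c_2})$, dramatically lowering the modulus. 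Then the key trick is a \emph{multiplicative} Fourier expansion $e_{nk'}(c_2\overline{c_1}+c_1\overline{c_2})=\sum_{\chi\bmod nk'}\widehat G(\chi)\chi(c_1)\overline\chi(c_2)$, with $\widehat G(\chi)$ a twisted Kloosterman sum estimated via Weil-type bounds (handling the prime powers where square-root cancellation fails), which is what separates $c_1$ from $c_2$ and produces genuine Dirichlet characters. Coupled with a Mellin change of basis from the archimedean phase $e(Xn^{1/3})$ to $n^{iy}$, this is what makes the hybrid large sieve applicable. Your sketch names Weil, Cauchy--Schwarz, Poisson, and a large sieve, but not reciprocity, not the multiplicative decomposition into characters, and not the Mellin change of basis; without those three steps, ``should then produce the bound'' does not close, and the resulting diagonal/off-diagonal bookkeeping will land back at $T^{3/2}$.
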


To help gauge the strength of this result, note that the analytic 
conductor of $L(f, 1/2+it)$ is $q_{\infty}(t) = (1+|t|)^3$. The
mean value theorem for Dirichlet polynomials would thus replace the exponent $4/3$ in \eqref{eq:mainthm} by $3/2$, thereby indicating the result in Theorem \ref{thm:mainthm} is nontrivial.  
In particular, from  \eqref{eq:mainthm} we deduce the subconvexity bound $L(f,1/2+it) \ll q_{\infty}(t)^{2/9 + \varepsilon}$. 

Formally taking $f$ to be an Eisenstein series in \eqref{eq:mainthm}, one would also obtain the bound $\int_{0}^{T} |\zeta(1/2+it)|^6 dt \ll T^{a+\varepsilon}$, with $a=4/3$.  However, 
in this case, one can deduce $a=5/4$ using H\"{o}lder's inequality to interpolate the twelfth moment bound
of Heath-Brown 
\cite{HB} and the (easy) fourth moment bound.
One should note however that the techniques for estimating the twelfth moment of zeta (as in \cite{HB} and \cite{IwaniecZeta}) do not generalize to $GL_3$ cusp forms.

Our method builds on and extends the important recent work of Aggarwal, Leung, and Munshi, who showed \cite[Thm $1.1$]{ALM} that for any $\varepsilon>0$
\begin{equation}\label{eq:ALM}
    \int_{T}^{T+T^{2/3}}  \Big|L(f, 1/2 + it ) \Big|^2 dt \ll_{f,\varepsilon} T^{5/4+\varepsilon}.  
\end{equation}  
In Section \ref{subsec:Outline} below we discuss in greater detail the interaction between our Theorem \ref{thm:mainthm} and this result; we also provide a sketch of the important steps that go into the proof of Theorem \ref{thm:mainthm}. 
Note that \eqref{eq:ALM} gives a stronger subconvexity bound for $L(f,s)$, namely
\begin{equation}\label{eq:ALMsubcon}
    L(f,1/2+it) \ll_{\varepsilon} q_{\infty}(t)^{5/24 + \varepsilon}.
\end{equation}
  The bound in \eqref{eq:ALMsubcon} is currently the strongest subconvexity result known for general $GL_3$ cusp forms, though see \eqref{eq:subconvexityselfdual} below for the self-dual case.

Theorem \ref{thm:mainthm} also improves on a recent result of Pal \cite[Theorem 1]{Pal}, which states
\begin{equation} \label{eq:Pal2mom}
    \int_{T}^{2T} |L(f, 1/2+it)|^2 dt \ll_{f,\varepsilon} T^{\frac32 - \frac{3}{88}+\varepsilon}.
\end{equation}

Our method proceeds by estimating weighted sums of the form
$\sum_{h,n} \lambda_f(n) \lambda_f(n+h)$.  
Such averages have been considered in \cite{BBMZ} with $\lambda_f(n)$ replaced by $d_3(n)$, which is of relevance for the sixth moment of the zeta function.  
The method of \cite{BBMZ} splits $d_3$ as $1*1*1$ which is a technique unavailable for $\lambda_f$.  We refer to \cite{BBMZ} for precise statements of their results, and only mention here that the quality of their estimate on the shifted divisor problem is able to imply $\int_0^{T} |\zeta(1/2+it)|^6 dt \ll T^{11/8+\varepsilon}$.  Note that $4/3 < 11/8$, so Theorem \ref{thm:mainthm} appears favorable even in the case of the zeta function.
See also \cite[Theorem 1.3(ii)]{MRT} for some improvements on \cite{BBMZ}.
Our result on the shifted convolution sum is as follows:
\begin{mytheo}\label{thm:shiftedSum}
    Let $1 \leq H \leq N^{1/2-\varepsilon}$. Let $W$ be an $X^\varepsilon$-inert function with fixed support on $(\mathbb{R}^+)^2$. Then \begin{align*}
        \sum_{n,k}\lambda(n)\lambda_f(n+k)W\left(\frac{n}{N},\frac{k}{H}\right)\ll_{f,\varepsilon} \frac{N^{4/3+\varepsilon}}{H^{1/3}}+\sqrt{H}N^{1+\varepsilon}.
    \end{align*}
\end{mytheo}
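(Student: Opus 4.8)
The plan is to detect the shift via a delta-method / circle method decomposition. We write $\lambda(n)$ and $\lambda_f(n+k)$, open the constraint $n = (n+k) - k$ using the Duke–Friedlander–Iwaniec delta method with modulus parameter $Q$, so that the sum becomes roughly $\frac{1}{Q}\sum_{q \leq Q}\frac{1}{q}\sumstar_{a \bmod q} e\left(\frac{ak}{q}\right)(\text{something})$ times integrals over the $GL_2$ Fourier coefficient $\lambda(n)$ and the $GL_3$ Fourier coefficient $\lambda_f(m)$ with $m = n+k$. Here I should choose $Q \asymp \sqrt{N/H}$ or similar, optimized at the end. The first step is to apply Voronoi summation to the $n$-sum, which is a $GL_2$ Voronoi with modulus $q$: this transforms $\sum_n \lambda(n) e(an/q)V(n/N)$ into a dual sum of length $\asymp q^2/N$ with a Hankel-type integral transform. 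The second step is to apply $GL_3$ Voronoi to the $m$-sum: $\sum_m \lambda_f(m) e(\bar a m/q)U(m/N)$ becomes a dual sum of length $\asymp q^3/N$, with the $GL_3$ integral transform (oscillating like $(mN/q^3)^{\text{something}}$ and of generic size $N/q^{3/2}$ in the relevant range).

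After both Voronoi steps, the character sum over $a \bmod q$ collapses to a Kloosterman-type or Ramanujan-type sum in the new dual variables $n_1$ (from $GL_2$) and $m_1$ (from $GL_3$) together with $k$; one expects square-root cancellation there generically, contributing a factor $\asymp \sqrt{q}$ (possibly with a $\gcd$-type savings when the Kloosterman sum degenerates). At this stage the remaining sum is over $q \leq Q$, over the short variable $k \asymp H$, and over the two dual variables, weighted by the product of the two oscillatory integral transforms. The third step is to bound this: after trivially summing $k$ (gaining a factor $H$) and estimating the dual-variable sums by Cauchy–Schwarz together with the Rankin–Selberg bound $\sum_{n \leq X}|\lambda(n)|^2 \ll X^{1+\varepsilon}$ and $\sum_{m \leq X}|\lambda_f(m)|^2 \ll X^{1+\varepsilon}$, and inserting the size of the integral transforms, one arrives at a bound of the shape $\frac{N^{4/3+\varepsilon}}{H^{1/3}} + \sqrt{H}N^{1+\varepsilon}$ after choosing $Q$ optimally; the first term is the "$GL_3$-dominated" regime and the second is the "diagonal"/trivial regime from small $q$ or from the zero-frequency term in one of the Voronoi sums. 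A stationary-phase analysis of the combined integral transform (a product of a $GL_2$ Bessel-type kernel and a $GL_3$ hyper-Bessel kernel, integrated against the $k$-variable bump function) will be needed to see that the $k$-sum and one dual variable are effectively constrained, which is where the exponent $4/3$ comes from.

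The main obstacle I expect is the combined oscillatory integral and the conductor bookkeeping in the second-moment / double-Voronoi setup: one must track the stationary phase in the $GL_3$ integral transform carefully, because the generic square-root-cancellation heuristic in the character sum is only valid when the dual $GL_3$ variable $m_1$ is not too small, and the "off-diagonal" terms where $m_1$ or $n_1$ is small (or where the Kloosterman sum fails to exhibit full cancellation, e.g. when $q$ has a large common factor with the dual variables) must be handled separately and shown to be dominated by the $\sqrt{H}N^{1+\varepsilon}$ term. A secondary technical point is that the hypothesis $H \leq N^{1/2-\varepsilon}$ is exactly what is needed to keep the $GL_2$-dual length $q^2/N$ and the shift $k \asymp H$ in a compatible range so that the analysis does not degenerate; verifying that this range is respected throughout, and that the inert/bump function hypotheses on $W$ propagate correctly through both Voronoi transforms (so all weight functions remain $N^{\varepsilon}$-inert), is the kind of routine-but-delicate check that underpins the whole argument. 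Since this theorem is described as a byproduct of the method proving Theorem \ref{thm:mainthm}, I would in practice extract it directly from the main argument's key estimate rather than re-deriving the delta-method setup from scratch.
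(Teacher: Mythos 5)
There is a genuine gap, and it begins with a misreading of the statement: the coefficient $\lambda(n)$ is $\lambda_f(1,n)$, the same $GL_3$ Hecke eigenvalue as $\lambda_f(n+k)$ (the abstract describes this as ``an average of shifted convolution sums of $GL_3$ Fourier coefficients,'' and the proof in Section \ref{sect:ShiftedSum} opens with $Z = \sum_{n,k}\lambda_f(n)\lambda_f(n+k)W(n/N,k/H)$). Your proposal treats $\lambda(n)$ as a $GL_2$ coefficient and applies $GL_2$ Voronoi to the $n$-sum, which solves a different (and easier) problem. Both sums require $GL_3$ Voronoi. A second, independent error is in the dual lengths: you quote $q^2/N$ and $q^3/N$, which would be correct for a bare modulus-$q$ Voronoi applied to a sum of length $N$, but the point of Lemma \ref{DeltaCor} is to take $C^2 = o(N)$, which injects an archimedean phase of size $\asymp N/C^2$ into the delta symbol; this inflates the effective conductor, and the correct $GL_3$ dual length is $N' \ll N^2/(d^3C^3)$, essentially the reciprocal scaling of what you wrote. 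With your lengths, the $GL_2$ dual sum would already be shorter than $1$ for $q \leq Q \asymp \sqrt{N/H}$, which should have been a warning sign.

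Beyond these, the proposal skips the steps that carry the weight of the paper's argument. You propose to ``trivially sum $k$, gaining a factor $H$''; the paper instead applies Poisson summation in $k$, which produces a short dual variable $k' \ll QN^\varepsilon/H$ and a Kloosterman sum in $k'$ — this is where the shortness of the $k$-range ($H \leq N^{1/2-\varepsilon}$) is actually exploited, and trivial summation would destroy that saving. More importantly, the argument then reduces to the bilinear norm $\mathcal{N}(N',Q,k',\Phi)$ of Proposition \ref{prop:secondmomentVSnorm}, which is bounded (Theorem \ref{thm:dualnormbound}) by duality, a second Poisson summation in the $n_2$-variable, additive–multiplicative reciprocity, a prime-by-prime analysis of twisted Kloosterman sums (Lemmas \ref{lemma:WeilTwisted}, \ref{lemma:Hbound}), and the hybrid large sieve (Lemma \ref{lem:largesieve}). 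Your sketch gestures at ``generic square-root cancellation in the character sum'' plus Cauchy–Schwarz and Rankin–Selberg, but there is no visible mechanism producing the exponent $4/3 - 1/3$ in $N^{4/3}/H^{1/3}$, nor any handling of the failure of square-root cancellation for twisted Kloosterman sums at higher prime powers, which the paper addresses explicitly. The closing remark that you would ``extract it directly from the main argument's key estimate'' is the right instinct, and that is indeed what the paper does — but the sketch given does not identify that key estimate (the norm bound of Theorem \ref{thm:dualnormbound}) or the chain Propositions \ref{prop:SafterVoronoi}, \ref{prop:JwrtJplus}, \ref{prop:JplusJtimesetc}, \ref{prop:secondmomentVSnorm} that funnels the shifted sum into it.
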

Note that this bound is nontrivial as long as $H>N^{1/4}$. Moreover, the restriction of $H^2\leq N^{1-\varepsilon}$ is only made for the ease of demonstration. In fact, the problem becomes significantly easier when $H^2\geq N$.
We present the proof of Theorem \ref{thm:shiftedSum} in Section \ref{sect:ShiftedSum}.

The long interval bound in \eqref{eq:mainthm} has some nice corollaries that we discuss now.
\begin{mycoro}[Rankin-Selberg Problem]
\label{coro:RankinSelberg}
Let $g$ be a fixed Hecke-Maass cusp form for $SL_2(\mz)$.  
Let $c_g = \frac{6}{\pi^2} L(\mathrm{sym}^2 g, 1)$.
Then for any $\varepsilon >0$ we have
\begin{equation}\label{eq:RScoro}
    \sum_{n \leq x} |\lambda_g(n)|^2 = c_g x + O_{g, \varepsilon}(x^{4/7+\varepsilon}).
\end{equation}
\end{mycoro}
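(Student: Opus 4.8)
The plan is to deduce this from Theorem~\ref{thm:mainthm} applied to the symmetric square lift $F := \mathrm{sym}^2 g$. By Gelbart--Jacquet, $F$ is a self-dual Hecke--Maass cusp form for $SL_3(\mz)$ (it is cuspidal because level-one Maass forms are non-dihedral), so Theorem~\ref{thm:mainthm} gives $\int_{-T}^{T}|L(F,\half+it)|^2\,dt \ll_{g,\varepsilon} T^{4/3+\varepsilon}$. The starting point is the factorization of the (degree four) Rankin--Selberg $L$-function: since $g$ has trivial nebentypus,
\[
 \zeta(2s)\sum_{n\ge 1}\frac{|\lambda_g(n)|^2}{n^s} \;=\; L(g\times g,s) \;=\; \zeta(s)\,L(F,s).
\]
Writing $c(n)$ for the Dirichlet coefficients of $\zeta(s)L(F,s)$, this shows $c(n)=\sum_{d^2m=n}|\lambda_g(m)|^2\ge 0$, that $\sum_{n\le x}|\lambda_g(n)|^2=\sum_{d\le\sqrt x}\mu(d)\,C(x/d^2)$ where $C(y):=\sum_{n\le y}c(n)$, and that $\zeta(s)L(F,s)$ is holomorphic for $\mathrm{Re}(s)\ge\half$ apart from a simple pole at $s=1$ with residue $L(F,1)$.

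The main work is to prove $C(y)=L(F,1)\,y+O(y^{4/7+\varepsilon})$. Since the $c(n)$ are nonnegative, $C$ is non-decreasing, so for any $1\le\Delta\le y$ we have the sandwich $\tfrac1\Delta\int_{y-\Delta}^{y}C(t)\,dt\le C(y)\le\tfrac1\Delta\int_{y}^{y+\Delta}C(t)\,dt$, and it suffices to evaluate $\tfrac1\Delta\int_{y}^{y+\Delta}C(t)\,dt$. Expanding $C(t)$ by Perron's formula on the line $\mathrm{Re}(s)=\kappa:=1+1/\log y$ and integrating in $t$ term by term, this average equals
\[
 \frac{1}{2\pi i}\int_{(\kappa)}\zeta(s)\,L(F,s)\,K(s)\,ds,\qquad K(s):=\frac{(y+\Delta)^{s+1}-y^{s+1}}{\Delta\, s(s+1)},
\]
where the smooth kernel $K$ behaves like $y^{s}/s$ for $1\ll|s|\ll y/\Delta$, decays like $y^{\mathrm{Re}(s)+1}/(\Delta|s|^2)$ beyond that, and has $K(1)=y+\Delta/2$. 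Shifting the contour to $\mathrm{Re}(s)=\half$ (crossing only the pole at $s=1$) yields $\tfrac1\Delta\int_{y}^{y+\Delta}C(t)\,dt=L(F,1)(y+\Delta/2)+\tfrac{1}{2\pi i}\int_{(1/2)}\zeta(s)L(F,s)K(s)\,ds$, and the analogous computation holds for the left average with $y+\Delta/2$ replaced by $y-\Delta/2$.

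It remains to bound the shifted integral, which is the heart of the matter. The decay of $K$ confines the effective range to $|t|\ll T:=y/\Delta$, and on $\mathrm{Re}(s)=\half$ the integral is $\ll y^{1/2}\int_{-T}^{T}|\zeta(\half+it)|\,|L(F,\half+it)|\,\tfrac{dt}{1+|t|}$. By Cauchy--Schwarz this is at most $y^{1/2}\bigl(\int_{-T}^{T}|\zeta(\half+it)|^2\tfrac{dt}{1+|t|}\bigr)^{1/2}\bigl(\int_{-T}^{T}|L(F,\half+it)|^2\tfrac{dt}{1+|t|}\bigr)^{1/2}$; the first factor is $\ll(\log T)^{O(1)}$ by the classical mean square of $\zeta$, while a dyadic decomposition together with Theorem~\ref{thm:mainthm} applied to $F$ makes the second factor $\ll T^{1/6+\varepsilon}$. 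Hence $\tfrac1\Delta\int_{y}^{y+\Delta}C(t)\,dt=L(F,1)\,y+O(\Delta)+O\bigl(y^{1/2}(y/\Delta)^{1/6+\varepsilon}\bigr)$, and the sandwich gives $C(y)=L(F,1)\,y+O(\Delta+y^{2/3+\varepsilon}\Delta^{-1/6})$; the choice $\Delta=y^{4/7}$ optimizes this to $O(y^{4/7+\varepsilon})$.

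Finally, inserting $C(y)=L(F,1)\,y+O(y^{4/7+\varepsilon})$ into $\sum_{n\le x}|\lambda_g(n)|^2=\sum_{d\le\sqrt x}\mu(d)\,C(x/d^2)$ produces the main term $L(F,1)\,x\sum_{d\le\sqrt x}\mu(d)d^{-2}=\tfrac{L(F,1)}{\zeta(2)}x+O(x^{1/2})=c_g\,x+O(x^{1/2})$ (using $c_g=\tfrac{6}{\pi^2}L(\mathrm{sym}^2 g,1)$ and $\zeta(2)=\pi^2/6$), while the error term contributes $\ll x^{4/7+\varepsilon}\sum_{d\ge1}d^{-8/7}\ll x^{4/7+\varepsilon}$; this gives \eqref{eq:RScoro}. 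The only essential ingredient beyond routine contour manipulation is the strength of Theorem~\ref{thm:mainthm}: running the same argument with the convexity-strength bound $\int_{-T}^{T}|L(F,\half+it)|^2\,dt\ll T^{3/2+\varepsilon}$ would recover Rankin's classical exponent $3/5$, and it is exactly the exponent $4/3$ that upgrades it to $4/7$.
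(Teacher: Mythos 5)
Your proof is correct and follows the same core strategy as the paper: a smoothed version of the sum is expressed as a contour integral against $\zeta(s)L(\mathrm{sym}^2 g,s)$, the contour is shifted to $\mathrm{Re}(s)=1/2$ picking up the main term at $s=1$, and the remaining critical-line integral is bounded by Cauchy--Schwarz combined with Theorem~\ref{thm:mainthm}, with the same balancing $\Delta\asymp y\asymp x^{4/7}$. The only differences are cosmetic choices of smoothing: you strip the $\zeta(2s)$ factor at the outset by M\"obius inversion and smooth via a monotone Ces\`aro-average sandwich (exploiting nonnegativity of the coefficients of $\zeta\cdot L(\mathrm{sym}^2 g)$), whereas the paper keeps the quotient $L(g\otimes g,s)/\zeta(2s)$ in the integrand and smooths with a pair of majorant/minorant bump functions $w_\pm$.
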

The Rankin-Selberg problem aims to reduce the exponent of $x$ in the error term present in \eqref{eq:RScoro}. The classical result, due independently to Rankin (\cite{Ra}) and Selberg (\cite{Sel}), established an error term of size $O_g(x^{\frac35+\varepsilon})$. 
In a recent breakthrough (\cite{Huang1}), Huang gave the first improvement over this long-standing exponent $3/5$, by obtaining the exponent $3/5 - \delta + o(1)$, with $\delta = \frac{1}{560}$. This has been improved successively by Pal (\cite[Cor. 1.3]{Pal}) with $\delta=\frac{6}{1085}$, and then by Huang again (\cite{Huang2}) with $\delta=\frac{3}{305}$. We note that \eqref{eq:RScoro} improves these results to $\delta = \frac{1}{35}$.
The application of Theorem \ref{thm:mainthm} to the Rankin-Selberg problem is fairly straightforward, and uses $f = \mathrm{sym}^2 g$.

\begin{mycoro}
\label{coro:LNQ}
 Let $f = \mathrm{sym}^2 g$, where $g$ is a fixed $SL_2(\mz)$ Hecke-Maass cusp form. For $\varepsilon >0$, $T$ large, and with $1 \leq \Delta \leq T$, we have 
 \begin{equation}
 \label{eq:Xiaoqing}
     \sum_{T \leq t_j \leq T + \Delta} L(1/2, f \otimes u_j)
     + \int_{T}^{T+\Delta} |L(1/2+it, f)|^2 dt \ll_{f, \varepsilon}  
     T^{1+\varepsilon}(\Delta + T^{1/7}),
 \end{equation}
 where the sum is over Hecke-Maass cusp forms $u_j$ (with spectral parameter $t_j$) on $SL_2(\mz)$.
In particular, we have the Lindel\"of-on-average bound for $\Delta \geq T^{1/7}$.  We also deduce the strong subconvexity bounds
\begin{equation}
\label{eq:subconvexityselfdual}
    L(1/2, f \otimes u_j) \ll_{f, \varepsilon} (1+|t_j|)^{8/7 + \varepsilon},
    \qquad
    L(1/2 + it, f) \ll_{f, \varepsilon} (1+|t|)^{4/7+\varepsilon}.
\end{equation}
\end{mycoro}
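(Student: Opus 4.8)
The plan is to run the spectral‑reciprocity machinery for the $GL_3 \times GL_2$ family (in the style of X.~Li and of Lin--Nunes--Qi), inserting Theorem~\ref{thm:mainthm} at the stage where a second moment of the $GL_3$ standard $L$-function appears. The mechanism is that, since $f=\mathrm{sym}^2 g$ is self-dual with real Hecke eigenvalues, $L(1/2,f\otimes E_t)=L(1/2+it,f)L(1/2-it,f)=|L(1/2+it,f)|^2$, so the two terms on the left of \eqref{eq:Xiaoqing} are precisely the cuspidal and continuous parts of the spectral decomposition of a single object. Concretely, fix a nonnegative even weight $h$ that majorizes the indicator of $[T,T+\Delta]$ and is supported on a slightly larger interval, and set
\[
\mathcal M(h)=\sum_j \frac{h(t_j)}{L(1,\mathrm{sym}^2 u_j)}\,L(1/2,f\otimes u_j)+\frac{1}{\pi}\int_{-\infty}^{\infty}\frac{h(t)}{|\zeta(1+2it)|^2}\,|L(1/2+it,f)|^2\,dt .
\]
Using $L(1,\mathrm{sym}^2 u_j)\ll_\varepsilon t_j^{\varepsilon}$, $|\zeta(1+2it)|^{-1}\ll_\varepsilon(1+|t|)^{\varepsilon}$, and the nonnegativity of the central values $L(1/2,f\otimes u_j)$ (which removes cancellation in the cuspidal sum), both quantities on the left of \eqref{eq:Xiaoqing} are $\ll T^{\varepsilon}\mathcal M(h)$, so it suffices to prove $\mathcal M(h)\ll_{f,\varepsilon}T^{1+\varepsilon}(\Delta+T^{1/7})$.

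To that end I would expand each $L(1/2,f\otimes u_j)$ by an approximate functional equation of length $\asymp t_j^{3}$, absorb the mild $t_j$-dependence of the archimedean weight (recall $\Delta\le T$), and apply the Kuznetsov formula in the form that unites the cuspidal and Eisenstein spectra. The diagonal term contributes $\ll T^{1+\varepsilon}\Delta$, matching the first term on the right of \eqref{eq:Xiaoqing}. The off-diagonal is, up to lower-order arithmetic corrections, of the shape
\[
\sum_{c}\frac1c\sum_{n}\frac{\lambda_f(n)}{\sqrt n}\,S(n,1;c)\,\Phi_h\!\left(\frac{4\pi\sqrt n}{c}\right)W\!\left(\frac{n}{T^{3}}\right),
\]
where $\Phi_h$ is the Bessel transform of $h$; a stationary-phase analysis gives $\Phi_h(y)\asymp T\Delta\,y^{-1/2}e(\pm y/(2\pi))$ for $y$ in the relevant range $T\ll y\ll T^{3/2}$ and negligible otherwise, so in particular $c\ll T^{1/2}$.

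The crux is to show that this off-diagonal is $\ll_{f,\varepsilon}T^{8/7+\varepsilon}$. I would open the Kloosterman sum, apply $GL_3$ Voronoi summation to the $n$-sum --- the phase $e(\pm 2\sqrt n/c)$ coming from $\Phi_h$ makes the dual sum genuinely nontrivial --- and evaluate the resulting oscillatory integrals by stationary phase; after reorganizing the dual sum together with its attached exponential factors and carrying out the $c$-average, the expression matches, up to an explicit normalizing factor, a second moment of $L(1/2+it,f)$ over a window whose length is a fixed power of $T$ (equivalently, an average of the shifted convolution sums $\sum_{n,k}\lambda_f(n)\lambda_f(n+k)W(n/N,k/H)$ lying in the ranges of Theorem~\ref{thm:shiftedSum}). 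The structural point is that the reciprocity trades the \emph{short} interval $[T,T+\Delta]$ at frequency $\asymp T$ for a \emph{long} second-moment integral of the same $L$-function, where Theorem~\ref{thm:mainthm} (or Theorem~\ref{thm:shiftedSum}) supplies the savings that produce the exponent $T^{1+1/7}$; this is exactly why \eqref{eq:Xiaoqing} is stronger, for small $\Delta$, than the pointwise consequence of Theorem~\ref{thm:mainthm}. Finally, \eqref{eq:subconvexityselfdual} follows by specializing \eqref{eq:Xiaoqing} to $\Delta=1$: nonnegativity isolates $L(1/2,f\otimes u_j)\ll_\varepsilon t_j^{8/7+\varepsilon}$ from the first term, while $\int_{T}^{T+1}|L(1/2+it,f)|^2\,dt\ll_\varepsilon T^{8/7+\varepsilon}$, together with the slow variation of $L(f,\cdot)$ on unit intervals, yields $L(1/2+it,f)\ll_\varepsilon(1+|t|)^{4/7+\varepsilon}$. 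The main obstacle is this reduction: carrying the Voronoi and stationary-phase transformations faithfully so that the off-diagonal collapses onto a normalized second moment covered by Theorem~\ref{thm:mainthm}, keeping all oscillatory factors, summation ranges, and the conductor bookkeeping in the $c$-average under control.
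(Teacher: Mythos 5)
The paper proves this corollary with a one-line citation: Lin, Nunes, and Qi \cite{LNQ} already reduce the moment in \eqref{eq:Xiaoqing} to a second moment of $L(f,1/2+it)$ over a long interval (see also Pal, \S3.2--3.3), and the paper simply inserts the bound $T^{4/3+\varepsilon}$ from Theorem~\ref{thm:mainthm} into that bookkeeping. Your proposal follows the same underlying route --- it is precisely the LNQ spectral-reciprocity argument --- but rather than citing their reduction you attempt to sketch it from scratch (approximate functional equation, Kuznetsov, $GL_3$ Voronoi, stationary phase). Your outline of the pipeline is structurally sound, and your final step (taking $\Delta=1$ together with nonnegativity of $L(1/2, f\otimes u_j)$ to extract \eqref{eq:subconvexityselfdual}) is the standard argument and correct. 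However, you stop short of actually carrying out the key transformation: you yourself flag ``carrying the Voronoi and stationary-phase transformations faithfully so that the off-diagonal collapses onto a normalized second moment'' as ``the main obstacle,'' which is exactly the content of LNQ's theorem. The Bessel-transform asymptotics you write down, the $\Delta$-dependence (or lack thereof) of the claimed off-diagonal bound $T^{8/7+\varepsilon}$, and the conductor bookkeeping in the $c$-aggregate are all asserted rather than derived, and it is precisely these computations that produce the exponent $1/7$. In substance your argument is an elaborated paraphrase of the paper's citation rather than a self-contained proof: both rely on LNQ's reduction, and neither redoes it.
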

This corollary follows by simply applying Theorem \ref{thm:mainthm} into the work of Lin, Nunes, and Qi \cite{LNQ}.  The moment problem in \eqref{eq:Xiaoqing} was first introduced by Xiaoqing Li in \cite{Li}, and the method of \cite{LNQ} eventually reduces to an estimate for the second moment estimated in \eqref{eq:mainthm}.  The best bound available in \cite{LNQ} was the exponent $3/2$ from the mean value theorem for Dirichlet polynomials. 
Applying our new bound as in Theorem \ref{thm:mainthm} then leads to Corollary \ref{coro:LNQ}. Pal \cite[Cor. 1.2]{Pal} also made an improvement along similar lines, using the bound in \eqref{eq:Pal2mom}.
Curiously, the required bound as in \cite[eq. (6)]{LNQ} has the same structure as in the Rankin-Selberg problem; for more details, see \cite[\S 3.2, \S3.3]{Pal}.  See also \cite[\S 3]{HK} for an alternative proof of the subconvexity results in \cite{LNQ}.

Another type of application of Theorem \ref{thm:mainthm} is to study sums of Dirichlet series coefficients multiplied by an oscillatory factor.  As a sample result in this direction, we state the following.
\begin{mycoro}
\label{coro:nonlinear}
    Let $g$ be a fixed $SL_2(\mz)$ Hecke-Maass cusp form.  
    Suppose that $w$ is a fixed smooth function supported on $[1,2]$.
    Suppose $P \gg N^{\varepsilon}$ and $\beta \in \mr$, $\beta \neq 0, 1$. Then
    \begin{equation}
        \sum_{n=1}^{\infty} \lambda_g(n)^2 e(P (n/N)^{\beta}) w(n/N) \ll_{\beta, g} N^{1/2+\varepsilon} P^{2/3+\varepsilon},
    \end{equation}
    which is nontrivial for $P \ll N^{3/4-\delta}$.
\end{mycoro}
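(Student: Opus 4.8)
\emph{Proof proposal.} The plan is to open the oscillatory weight by Mellin inversion, recognize the resulting Dirichlet series as $\zeta(s)L(f,s)/\zeta(2s)$ with $f=\mathrm{sym}^2 g$, and then estimate a short vertical-line integral using Theorem~\ref{thm:mainthm} together with the classical second moment of $\zeta$. Write $V(x)=e(Px^\beta)w(x)$ and let $\widetilde V(s)=\int_0^\infty V(x)x^{s-1}\,dx$ be its Mellin transform. By Rankin--Selberg theory (and Gelbart--Jacquet, which makes $L(f,s)=L(\mathrm{sym}^2 g,s)$ entire) one has $\sum_{n\ge 1}\lambda_g(n)^2 n^{-s}=\zeta(s)L(f,s)/\zeta(2s)$ for $\mathrm{Re}(s)>1$, whence
\begin{equation*}
\sum_{n\ge 1}\lambda_g(n)^2 e(P(n/N)^\beta)w(n/N)=\frac{1}{2\pi i}\int_{(\sigma)}\widetilde V(s)\,N^s\,\frac{\zeta(s)L(f,s)}{\zeta(2s)}\,ds,\qquad \sigma>1.
\end{equation*}

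Next I would analyze $\widetilde V(s)$ by stationary phase. On the support of $w$ one has $x\asymp 1$, and the phase of $x\mapsto V(x)x^{it}$ is $2\pi Px^\beta+t\log x$, with derivative $2\pi P\beta x^{\beta-1}+t/x$; since $\beta\neq 0,1$ this has a single nondegenerate stationary point whenever $t/P$ lies in a fixed interval depending on $\beta$, at which the second derivative is $\asymp P$. Repeated integration by parts then shows $\widetilde V(\sigma+it)\ll_{A,\sigma}(1+|t|)^{-A}$ unless $|t|\asymp_\beta P$, and on that dyadic window the stationary phase expansion gives $\widetilde V(\sigma+it)\ll_\sigma P^{-1/2}$, uniformly for $\sigma$ in a compact set; the factor $P^{-1/2}$ is the square-root cancellation that will make the bound nontrivial. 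Shifting the contour to $\mathrm{Re}(s)=1/2$, the only pole crossed is that of $\zeta(s)$ at $s=1$, contributing $\widetilde V(1)\,N\,L(f,1)/\zeta(2)\ll_A NP^{-A}$ by the nonstationary-phase bound for $\widetilde V(1)$ (here $\beta\neq 0$ suffices), which is negligible because $P\gg N^\varepsilon$.

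On the line $\mathrm{Re}(s)=1/2$ I would use $1/|\zeta(1+2it)|\ll\log(2+|t|)$ together with the localization of $\widetilde V$ to get
\begin{equation*}
\sum_{n\ge 1}\lambda_g(n)^2 e(P(n/N)^\beta)w(n/N)\ll N^{1/2}P^{-1/2}(\log P)\int_{|t|\asymp P}|\zeta(\tfrac12+it)|\,|L(f,\tfrac12+it)|\,dt+O_A(NP^{-A}).
\end{equation*}
Cauchy--Schwarz bounds the $t$-integral by $\big(\int_{|t|\ll P}|\zeta(\tfrac12+it)|^2\,dt\big)^{1/2}\big(\int_{|t|\ll P}|L(f,\tfrac12+it)|^2\,dt\big)^{1/2}$; the first factor is $\ll P^{1/2+\varepsilon}$ by the classical mean value theorem for $\zeta$, and the second is $\ll P^{2/3+\varepsilon}$ by Theorem~\ref{thm:mainthm}. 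Multiplying, $N^{1/2}P^{-1/2}\cdot P^{1/2+\varepsilon}\cdot P^{2/3+\varepsilon}=N^{1/2+\varepsilon}P^{2/3+\varepsilon}$, as claimed; and since $\sum_{n}\lambda_g(n)^2 w(n/N)\asymp N$ by Rankin--Selberg, this beats the trivial bound exactly when $N^{1/2}P^{2/3}\ll N^{1-\delta}$, i.e.\ $P\ll N^{3/4-\delta}$.

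Modulo Theorem~\ref{thm:mainthm}---the only deep input---the main obstacle is purely the harmonic analysis of the weight: one must extract the full $P^{-1/2}$ from $\widetilde V$ while simultaneously controlling its decay away from the window $|t|\asymp P$, since without this saving the argument merely reproduces the trivial bound $N$. Some care is also needed with the uniformity of the stationary-phase estimates in $\sigma$ (to legitimize term-by-term integration and the residue computation) and with the $\beta$-dependence of the window, which is where the hypotheses $\beta\neq 0,1$ and $P\gg N^\varepsilon$ enter.
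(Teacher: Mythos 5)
Your proposal is correct and follows essentially the same route as the paper: Mellin inversion with $V(x)=e(Px^\beta)w(x)$, stationary phase localizing $\widetilde V$ to $|t|\asymp P$ with the $P^{-1/2}$ saving, the factorization $\sum\lambda_g(n)^2 n^{-s}=\zeta(s)L(\mathrm{sym}^2 g,s)/\zeta(2s)$, a contour shift to the half-line with negligible residue, and Cauchy--Schwarz combining the classical second moment of $\zeta$ with Theorem~\ref{thm:mainthm}. You have merely spelled out the archimedean details (the behavior of $\widetilde V$, the location of the stationary point, the uniformity in $\sigma$) that the paper's sketch suppresses.
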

The proof uses that $L(g \otimes g, s)$ factors as $L(\mathrm{sym}^2 g, s) \zeta(s)$.
The same method can be used to derive results for the coefficients of the product of $GL_3$ and $GL_d$ $L$-functions for $d=1,2,3$.
There are a large number of results on nonlinear twists of Fourier coefficients, 
and this is not the main topic of this paper,
so we merely refer to \cite{KMS, KL} for a more comprehensive list of references.

The proofs that the above corollaries follow from Theorem \ref{thm:mainthm} are either standard or explicitly pointed out by previous works. For completeness, we give brief sketches of the proofs in Section \ref{sect:CorProofSketch}.

\subsection{Notations}  We use standard conventions present in analytic number theory. We use $\varepsilon$ to denote an arbitrarily small positive constant. For brevity of notation, we allow $\varepsilon$ to change depending on the context. 
The expression $F \ll G$ implies there exists some constant $k$ for which $\abs{F} \leq k\cdot G$ for all the relevant $F$ and $G$. We use $F \ll_{\varepsilon} G$ to emphasize that the implied constant $k$ depends on $\varepsilon$ (it may also depend on other parameters). For error terms, we often use the big $O$ notation, so $f(x) = O(g(x))$ implies that $f(x) \ll g(x)$ for sufficiently large $x$. 
As usual, $e(x) = e^{2\pi i x}$, and $e_p(x) = e(\frac{x}{p}) = e^{2\pi i \frac{x}{p}}$. Also, $S(m,n,c) \coloneqq \sideset{}{^*}\sum_{x \mymod{c}} e_c\left ({mx + n\overline{x}}\right )$ denotes the Kloosterman sum.

\subsection{Outline of the Proof}
\label{subsec:Outline}
As mentioned earlier, our method builds on that of \cite{ALM}, who studied the shorter interval moment problem of the form
\begin{equation}
    \int_T^{T+\Delta} |L(f, 1/2+it)|^2 dt.
\end{equation}
Our initial analysis follows theirs: we apply an approximate functional equation, square out the resulting sum, and perform the $t$-integration.  This leads to a sum of the form
\begin{equation}
\label{eq:shiftedsum}
 \frac{T}{N} \sum_{k \asymp  \frac{N}{T}}
    \sum_{n \asymp N}
    \lambda_f(n) \overline{\lambda_f(n+k)},
\end{equation}
with $N \asymp T^{3/2}$. 
Next we apply a variant of the DFI delta symbol (see \cite[Lemma 3.1]{LeungShiftedSum}) where we choose arithmetic conductors of size $c \leq Q$ with $Q^2 = o(N)$.  This reduces the arithmetic conductor at the price of increasing the archimedean conductors present in the delta symbol.  
Here the archimedean conductor in the delta symbol has size $\frac{N}{Q^2}$.
\footnote{Due to the length of the $k$-sum being significantly shorter than the $n$-sum, a certain amount of increase in analytic oscillation does not affect the $k$-sum. Thus, decreasing $Q$ within a certain range decreases the dual length of $k$ at the cost of increasing the dual length of the other variables. This allows a re-balance of the mass of the variables which we optimize to get the final bound.}
We then apply the $GL_3$ Voronoi summation formula (in each variable) as well as Poisson summation in the $k$ variable.  So far these moves more or less coincide with \cite{ALM}, and we arrive at an expression of the form
\begin{equation}
\label{eq:sketchadditive}
  \frac{1}{N} 
    \int_{v \asymp 1}
   \sum_{c \asymp Q}
   \sum_{k \asymp K}
   \Big|\sum_{n \asymp N'} \lambda_f(n) S(\overline{k}, n;c) e \Big(3 n^{1/3} \frac{T^{1/2}}{Q} v \Big) \Big|^2  dv,
\end{equation}
with $K = \frac{QT}{N}$,
and $N' = \frac{N^2}{Q^3}$. 
See Proposition \ref{prop:SafterVoronoi} below for the rigorous statement.

It is also convenient to change the archimedean oscillatory factor into multiplicative characters, using Mellin inversion.  This essentially translates \eqref{eq:sketchadditive} into
\begin{equation*}
    \frac{1}{NY}
    \int_{y \asymp Y}
   \sum_{c \asymp Q}
   \sum_{k \asymp K}
   \Big|\sum_{n \asymp N'} \lambda_f(n) S(\overline{k}, n;c) n^{iy} \Big|^2  dy,
\end{equation*}
where $Y = N/Q^2$, which has been the size of the archimedean phase since the introduction of the delta symbol.  The purpose of this change of basis is to facilitate an eventual usage of the large sieve inequality.  At this point, we give up using any properties of the coefficients $\lambda_f(n)$ and reduce the problem to a norm bound on the following bilinear form:
\begin{equation}
\label{eq:Norm}
    \mathcal{N}(N', Q, k, Y)
    = \max_{\| \alpha \| = 1} \int_{y \asymp Y} \sum_{c \asymp Q} \Big| \sum_{n \asymp N'} \alpha_{n} S(\overline{k}, n;c) n^{iy} \Big|^2 dy.
\end{equation}
In terms of this norm, our bound on the second moment is then essentially
\begin{equation}
\label{eq:maxnormSketch}
    T^{-3/2} Y^{-1} K N' \max_{k \asymp K} \mathcal{N}(N', Q, k, Y).
\end{equation}
See Proposition \ref{prop:secondmomentVSnorm} for the rigorous statement.

To help judge the progress so far, we note that if we hypothesize the most optimistic bound
$\mathcal{N}(N', Q, k, Y) \ll Q (N' + QY)$, then we would recover a bound $T^{\varepsilon}(T + Q^{-2} T^{5/2})$ for the second moment.  With $Q = \sqrt{N} = T^{3/4}$, this would imply the optimal bound on the second moment in Theorem \ref{thm:mainthm}.
Needless to say, it appears difficult to establish such a strong bound on the norm.
Also, it is easy to deduce a bound on this norm using the hybrid large sieve inequality.
However, this approach is only strong enough to recover the trivial bound on our moment problem; see Section 
\ref{section:largesieveinterlude} for more details.

As in \cite{ALM},
we will appeal to the duality principle.
We have the following dual definition for the norm in \eqref{eq:Norm},
\begin{equation}\label{eq:dualnorm}
    \mathcal{N}(N', Q, k, Y)
    = \max_{\| \beta \| = 1}  \sum_{n \asymp N'} 
    \Big|
    \int_{y \asymp Y} \sum_{c \asymp Q} \beta(c,y) S(\overline{k}, n;c) n^{iy} dy \Big|^2.
\end{equation}
Next we open the square in \eqref{eq:dualnorm} and apply Poisson summation in $n$. The conductor here is of size at most $Y Q^2$, so the dual sum has length $\frac{Y Q^2}{N'} = \frac{N}{N'} =\frac{Q^3}{N}$, which is relatively small. The contribution of the zero frequency gives rise to a diagonal term of size $N' Q$.  Among the non-zero frequencies, for the purpose of this sketch, we consider the opposite extreme scenario, where  $(c_1, c_2) = 1$. In this case the relevant finite Fourier transform evaluates as
\begin{equation*}
    e_{c_1}(-c_2 \overline{nk}) e_{c_2}(-c_1 \overline{nk}).
\end{equation*}
Using the standard reciprocity, this converts to 
\begin{equation}
\label{eq:exponentialSketch}
    e_{nk}(c_2 \overline{c_1} + c_1 \overline{c_2}),
\end{equation}
which, by virtue of the sizes of $n,k,c_1$ and $c_2$, substantially reduces the modulus
(note $\frac{nk}{c_1 c_2} \approx \frac{Q^2 T}{N^2} \ll \frac{T}{N} \approx T^{-1/2}$).
The corresponding archimedean integral may be
analysed via stationary phase methods, leading to (see 
\eqref{eq:IofUapproximation}) an expression with a phase roughly of the form
\begin{equation}
\label{eq:phaseIntro}
    \Big(\frac{c_1 c_2 (y_1 -y_2)}{n}\Big)^{i(y_1 - y_2)}.
\end{equation}
The variables $c_1$ and $c_2$ are separated in \eqref{eq:phaseIntro}, but not in \eqref{eq:exponentialSketch}.  However, note that \eqref{eq:exponentialSketch} is a function of $c_1/c_2$ only.  Hence we have the (multiplicative) Fourier decomposition 
\begin{equation}
\label{eq:padicsep}
    e_{nk}(c_2 \overline{c_1} + c_1 \overline{c_2})
    =
    \sum_{\chi \shortmod{nk}} \widehat{G}(\chi) \chi(c_1) \overline{\chi}(c_2),
\end{equation}
where
\begin{equation} 
\label{eq:fouriersketch}
    \widehat{G}(\chi) = \frac{1}{\varphi(nk)} \sum_{t \shortmod{nk}} e_{nk}(t + \overline{t}) \overline{\chi}(t) = \frac{S_{\chi}(1,1;nk)}{\varphi(nk)}.
\end{equation}
Here $S_{\chi}(m,n;c)$ is the twisted Kloosterman sum.  Applying these formulas to \eqref{eq:dualnorm}, we obtain the following sort of contribution towards $\mathcal{N}$: 
\begin{multline*}
    \max_{\| \beta \| = 1}
    \Big|
    \int_{y_1, y_2} 
    \sum_{n}
    \sum_{\chi \shortmod{nk}} \widehat{G}(\chi) 
    \\
    \times
    \sum_{c_1, c_2} \beta(c_1, y_1) \overline{\beta}(c_2, y_2) 
 \Big(\frac{c_1 c_2(y_1 - y_2)}{n}\Big)^{i(y_1 - y_2)} \chi(c_1) \overline{\chi}(c_2)
 d y_1 dy_2 \Big|.
\end{multline*}
By a simple Cauchy-Schwarz argument, this reduces to estimating an expression of the type
\begin{equation*}
    \sum_{n} \sum_{\chi \shortmod{nk}} |\widehat{G}(\chi)| 
    \int_{y, v} \Big| \sum_{c} \beta(c,y) c^{iv} \chi(c) \Big|^2 dy dv.
\end{equation*}
Unfortunately the clean bound $|\widehat{G}(\chi)| \ll (nk)^{-1/2+\varepsilon}$ is not always valid, though the truth is not too far off.  If $nk$ is prime, for example, then this Weil-type bound indeed holds.  For higher prime powers $p^r$ ($r\geq 2$), 
the sharpest bound is more complicated to state (see \cite[Lemma 7.1.1]{Kroesche}), but 
one always saves at least a factor of $\sqrt{p}$.
The sparsity of higher prime powers makes up for the lack of individual savings, and we obtain the same bound as if the Weil-type bound were always true.

The final step is an application of the large sieve inequality which 
leads to (roughly-- see \eqref{eq:secondmomentAfterNormBound} for the rigorous statement)
\begin{equation*}
    \int_{T}^{2T} |L(f, 1/2+it)|^2 dt 
    \ll T^{\varepsilon} \Big(\frac{NT}{Q^2} + \frac{Q T^{3/2}}{\sqrt{N}} \Big),
\end{equation*}
which is optimized at $Q= N^{1/2} T^{-1/6}$, thereby proving Theorem \ref{thm:mainthm}.

The authors also investigated another arrangement which keeps the sum over $k$ in the definition of the norm (rather than taking the max over $k \asymp K$ as in \eqref{eq:maxnormSketch}).  Compared to the above sketch, this setup makes the diagonal term smaller and the off-diagonals larger.  Curiously, it leads to a bound that is optimized with a different value of $Q$, but culminates with the same bound as in Theorem \ref{thm:mainthm}.  This alternative approach is more difficult from an arithmetic perspective, since it leads to more complicated character sum evaluations, roughly of the form
\begin{equation*}
    e_{c_1}(-c_2 \overline{n k_1}) e_{c_2}(-c_1 \overline{nk_2})
     \approx e_{nk_1}(c_2 \overline{c_1}) e_{nk_2}(c_1 \overline{c_2}).
\end{equation*}
The difficulty here is that we have two different moduli $nk_1$ and $nk_2$.
Another technical (but not essential) issue is that to use the varying-modulus large sieve one needs primitive characters, 
while in \eqref{eq:padicsep} the sum is over all characters.
For this reason, we chose the simpler approach presented in this paper.

Another approach to the problem is to start with the arrangement of the short interval moment $\int_T^{T+\Delta} |L(f,1/2+it)|^2 dt$ of \cite{ALM} and sum over a discrete set of values of $T$ (spaced by $\Delta$) to cover the dyadic interval $[T, 2T]$.  Executing this discrete sum with Poisson summation, and with some simplification, one would arrive at an expression essentially equivalent to \eqref{eq:sketchadditive} (and hence the subsequent steps would be the same).  

Finally, we mention that \cite{ALM} implicitly bounds a variant of the norm $\mathcal{N}(N', Q, k, Y)$ by an iterative Cauchy-Schwarz argument
instead of the large sieve inequality presented here. 
A back-of-the-envelope type calculation suggests that their method can be applied to deduce the same bound as Theorem \ref{thm:dualnormbound}. 
The large sieve approach here is simpler, especially because
the machinery for estimating character sums only uses Weil's bound instead of the multivariable sums in \cite{ALM} which rely on Deligne's bound.  

\subsection{Acknowledgements}
We thank Chung-Hang Kwan for helpful comments.

\section{Preliminaries and background}
In this section we collect some results from the literature that will be used later.
\subsection{Archimedean Analysis}
First we recall the definition of a family of inert functions.  Suppose $\mathcal{F}$ is an index set and $X: \mathcal{F} \rightarrow \mr_{\geq 1}$ is a function. We denote $X(i)$ as $X_i$, for $i \in \mathcal{F}$.
\begin{mydefi}
 A family of smooth functions $\{ w_i : i \in \mathcal{F} \}$ supported on a Cartesian product of dyadic intervals in $\mr_{>0}^d$ is called $X$-inert if for each $j = (j_1, \dots, j_d) \in \mz_{\geq 0}^d$ we have
 \begin{equation*}
  \sup_{i \in \mathcal{F}}
  \sup_{(x_1, \dots, x_d) \in \mr_{>0}^d}
  \frac{x_1^{j_1} \dots x_d^{j_d} |w_i^{(j_1, \dots, j_d)}(x_1, \dots, x_d)|}{ X_i^{j_1 + \dots + j_d}} < \infty.
 \end{equation*}
\end{mydefi}
For brevity, but as an abuse of terminology, we may refer to a single function being inert when we should properly say that it is part of an inert family.
In practice we will often have functions depending on a number of variables and it is unwieldy to list them all.  
Our convention will be to write $w(x_1, \dots, x_k; \cdot)$ where $\cdot$ represents some suppressed list of variables for which $w$ is inert; we will use this notation even for $k=0$.

We now state some properties of inert families. Interested readers can find the proofs of these lemmas in \cite[Sec. 2 and 3]{KPY} and \cite{BKY}.

\begin{mylemma}[Integration by parts]
\label{lemma:integrationbyparts}
Suppose that $\{ w_i(t) \}$ is a family of $X$-inert functions supported on $[Z, 2Z]$, with $w^{(j)}(t) \ll (Z/X)^{-j}$, for each $j=0,1,2, \dots$.  Suppose that $\phi$ is smooth and satisfies $\phi^{(j)}(t) \ll \frac{Y}{Z^j}$ for $j \geq 1$, for some $Y/X \geq R \geq 1$ and all $t$ in the support of $w$.  If $|\phi'(t)| \gg \frac{Y}{Z}$ for all $t \in [Z,2Z]$ then for arbitrarily large $A > 0$ we have
\begin{equation*}
 \intR w(t) e^{i \phi(t)} dt \ll_A Z R^{-A}.
\end{equation*}
\end{mylemma}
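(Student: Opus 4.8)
The plan is to reduce the estimate to the standard non-stationary phase bound by a change of variables, and then integrate by parts $A$ times, with each step gaining a factor $\ll X/Y \leq R^{-1}$.

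First I would rescale to a unit interval. Writing $t = Zu$, the integral becomes $Z \intR \widetilde{w}(u)\, e^{i\widetilde{\phi}(u)}\, du$, where $\widetilde{w}(u) = w(Zu)$ and $\widetilde{\phi}(u) = \phi(Zu)$. The hypotheses translate to: $\widetilde{w}$ is supported on $[1,2]$ with $\widetilde{w}^{(j)}(u) \ll_j X^j$ (merely the $X$-inertness of $w$ restated, since $\frac{d^j}{du^j} w(Zu) = Z^j w^{(j)}(Zu) \ll (Z/X)^{-j} Z^j = X^j$); and $\widetilde{\phi}$ satisfies $|\widetilde{\phi}'(u)| \gg Y$ and $\widetilde{\phi}^{(j)}(u) \ll_j Y$ for every $j \geq 1$. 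So it suffices to show $\intR \widetilde{w}\, e^{i\widetilde{\phi}}\, du \ll_A R^{-A}$.

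Next comes the iteration. Since $\frac{d}{du} e^{i\widetilde{\phi}} = i\widetilde{\phi}'\, e^{i\widetilde{\phi}}$ and $\widetilde{w}$ is compactly supported, one integration by parts gives $\intR \widetilde{w}\, e^{i\widetilde{\phi}}\, du = -\intR \frac{d}{du}\!\big( \widetilde{w}/(i\widetilde{\phi}') \big)\, e^{i\widetilde{\phi}}\, du$, and iterating $A$ times produces $\intR \widetilde{w}\, e^{i\widetilde{\phi}}\, du = (-1)^A \intR \widetilde{w}_A\, e^{i\widetilde{\phi}}\, du$, where $\widetilde{w}_0 = \widetilde{w}$ and $\widetilde{w}_{m+1} = \frac{d}{du}\!\big( \widetilde{w}_m/(i\widetilde{\phi}') \big)$. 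The crux is an induction showing that each $\widetilde{w}_m$ is supported on $[1,2]$ and obeys $\widetilde{w}_m^{(j)}(u) \ll_{m,j} X^{m+j}\, Y^{-m}$ for all $j \geq 0$. For the inductive step I would first record that $(1/\widetilde{\phi}')^{(k)} \ll_k Y^{-1}$ for all $k \geq 0$: by the Fa\`a di Bruno / quotient formula each term in $(1/\widetilde{\phi}')^{(k)}$ is a product of some number $r$ of factors $\widetilde{\phi}^{(\ell)}$ (with $\ell \geq 2$) divided by $(\widetilde{\phi}')^{r+1}$, hence is $\ll Y^r/Y^{r+1} = Y^{-1}$. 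Combining this with the inductive bound on $\widetilde{w}_m$ via the Leibniz rule gives $\big( \widetilde{w}_m/(i\widetilde{\phi}') \big)^{(j)} \ll X^{m+j} Y^{-m-1}$, and one further derivative yields the bound for $\widetilde{w}_{m+1}$. Taking $j = 0$ and $m = A$, and using $X/Y \leq 1/R$, gives $\|\widetilde{w}_A\|_\infty \ll_A R^{-A}$; since $\widetilde{w}_A$ is supported on $[1,2]$, we get $\intR \widetilde{w}_A\, e^{i\widetilde{\phi}}\, du \ll_A R^{-A}$, and restoring the factor $Z$ from the rescaling finishes the proof.

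The main obstacle, such as it is for this routine lemma, is purely the bookkeeping in the induction: one must verify that applying $\frac{d}{du}\!\big( \,\cdot\, /(i\widetilde{\phi}')\big)$ simultaneously shrinks the sup-norm by a factor $\ll X/Y$ and regenerates exactly the same shape of derivative bounds (so that no derivative is lost and the gain genuinely compounds), and that all implied constants depend only on $m$, $j$ and the implied constants in the hypotheses, hence are uniform over the inert family $\{w_i\}$. One should also note that for a fixed $A$ only the derivative bounds on $\phi$ up to order $A+1$ are used, so finitely many of the hypotheses suffice.
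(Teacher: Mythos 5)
Your proof is correct and is the standard iterated integration-by-parts argument (rescale, then repeatedly apply $\widetilde{w}\mapsto (\widetilde{w}/i\widetilde{\phi}')'$, tracking the bound $\widetilde{w}_m^{(j)}\ll X^{m+j}Y^{-m}$ by induction). The paper does not reproduce a proof of this lemma but cites \cite{KPY} and \cite{BKY}, and your argument is essentially the same one found there.
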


\begin{myprop}[Stationary phase]
\label{prop:statphase}
 Suppose $\{ w_i(t_1, \dots, t_d) \}$ is an $X$-inert family supported on $t_1 \asymp Z$ and $t_k \asymp X_k$ for $k=2,\dots, d$.  Suppose that on the support of $w_i$, that $\phi = \phi_i$ satisfies
 \begin{equation*}
  \frac{\partial^{a_1 + \dots + a_d}}{\partial t_1^{a_1} \dots \partial t_d^{a_d}}
  \phi(t_1, \dots, t_d) \ll \frac{Y}{Z^{a_1}} \frac{1}{X_2^{a_2} \dots X_d^{a_d}},
 \end{equation*}
for all $(a_1, \dots, a_d) \in \mathbb{Z}_{\geq 0}$.  Suppose $\phi''(t_1, t_2, \dots, t_d) \gg \frac{Y}{Z^2}$ (here and below $\phi'$ and $\phi''$ refer to the derivative with respect to $t_1$) on the support of $w$.  Moreover, suppose there exists $t_0 \in \mr$ such that $\phi'(t_0) = 0$, and that $Y/X^2 \geq R \geq 1$.  Then
\begin{equation*}
 \intR e^{i \phi(t_1, \dots, t_d)} w(t_1, \dots, t_d) dt_1
 = \frac{Z}{\sqrt{Y}} e^{i \phi(t_0, t_2, \dots, t_d)} W(t_2, \dots, t_d) + O(Z R^{-A}),
\end{equation*}
where $W = W_i$ is some new family of $X$-inert functions.
\end{myprop}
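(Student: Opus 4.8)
The plan is to reduce to a single one-dimensional oscillatory integral at a normalized scale and then run the classical localize--change-variables--Fresnel argument, while carefully tracking the dependence on the suppressed parameters and on the family index $i$. First I would substitute $t_1 = Z u$, so that the integral becomes $Z \int_{-\infty}^{\infty} e^{i\psi(u)} \widetilde w(u)\,du$, where $\psi(u) = \psi(u; t_2,\dots,t_d) := \phi(Z u, t_2, \dots, t_d)$ and $\widetilde w(u) := w(Z u, t_2,\dots,t_d)$. In these variables $\widetilde w$ is $X$-inert in $u \asymp 1$ (and in the remaining variables), $\psi$ satisfies $\partial_u^{a_1}\partial_{t_2}^{a_2}\cdots\partial_{t_d}^{a_d}\psi \ll Y/(X_2^{a_2}\cdots X_d^{a_d})$, the stationary point is $u_0 = t_0/Z \asymp 1$ with $|\psi''(u_0)| \asymp Y$, and it suffices to show the integral equals $Y^{-1/2} e^{i\psi(u_0)} W + O_A(R^{-A})$ for a new $X$-inert family $W$; undoing the substitution then recovers the claimed formula (with the overall factor $Z$ and the stated error $Z R^{-A}$). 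Since the support is a single connected dyadic box and $|\psi''| \asymp Y$ there, $\psi''$ has constant sign on the support, hence $\psi'$ is strictly monotone, $u_0$ is the unique stationary point, and $|\psi'(u)| \asymp Y |u - u_0|$ by the mean value theorem.

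Next I would split $\widetilde w = \widetilde w \cdot V\big((u-u_0)/\delta\big) + \widetilde w \cdot \big(1 - V((u-u_0)/\delta)\big)$ with $V$ a fixed bump equal to $1$ near $0$ and $\delta := (R/Y)^{1/2}$; the key point is that $\delta \le 1/X$ precisely because $R \le Y/X^2$, so both pieces are inert in $u$ at scale $\delta^{-1}$. On the far piece, decomposing dyadically into $|u - u_0| \asymp \Delta$ for $\delta \le \Delta \ll 1$, the amplitude is inert at scale $\Delta$ while $|\psi'| \gg Y\Delta$, so repeated integration by parts (that is, Lemma~\ref{lemma:integrationbyparts} after rescaling each piece to unit length) gains a factor $Y\Delta^2$ per step; summing the dyadic pieces, which are dominated by $\Delta = \delta$, bounds the far contribution by $O_A\big(\delta (Y\delta^2)^{-A}\big) = O_A(R^{-A})$. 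For the near piece I would pass to Morse normal form: writing $\psi(u) - \psi(u_0) = (u-u_0)^2 q(u)$ with $q(u) = \int_0^1 (1-\tau)\psi''(u_0 + \tau(u-u_0))\,d\tau$, which is smooth with $q \asymp Y$ on the localized region, the map $s = s(u) := (u-u_0)\sqrt{|q(u)|}$ is a smooth diffeomorphism onto an $s$-interval of length $\asymp \sqrt R$ about the origin, with $|s'(u_0)| = \sqrt{|q(u_0)|} \asymp \sqrt Y$ and $\psi(u) - \psi(u_0) = \pm s^2$ (sign of $\psi''$). The near part becomes $e^{i\psi(u_0)} \int e^{\pm i s^2}\, \widetilde w(u(s))\, u'(s)\, ds$, where $u'(s) \asymp Y^{-1/2}$, the factor $\sqrt Y\, u'(s)$ is inert at scale $\sqrt Y$ in $s$, and $\widetilde w(u(s))$ is inert at scale $\sqrt Y/X \ge \sqrt R$ in $s$ (again by $R \le Y/X^2$). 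Rescaling $s = \sqrt R\, \sigma$ thus turns the amplitude into $Y^{-1/2} G(\sigma)$ with $G$ lying in a fixed bounded family of $C_c^\infty$ functions supported on $|\sigma| \lesssim 1$ and $G(0) = \sqrt{2/\rho}\cdot \widetilde w(u_0)$, where $\rho := \psi''(u_0)/Y \asymp 1$. The elementary Fresnel asymptotic $\int e^{\pm i R \sigma^2} G(\sigma)\,d\sigma = \sqrt{\pi/R}\, e^{\pm i \pi/4} G(0) + O_A(R^{-A})$, uniform over such $G$ (proved by isolating the value $G(0)$ and integrating by parts off the origin), then produces the main term; collecting everything and undoing $t_1 = Zu$ gives $\frac{Z}{\sqrt Y} e^{i\phi(t_0, t_2, \dots, t_d)}\, W + O_A(Z R^{-A})$ with $W = \sqrt{2\pi/\rho}\, e^{\pm i\pi/4}\, w(t_0, t_2, \dots, t_d)$, matching the classical shape $\sqrt{2\pi/|\phi''(t_0)|}\, e^{i\phi(t_0)}\, w(t_0)$ up to the unimodular phase.

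It remains to verify that $W$ is $X$-inert uniformly over the family. By the implicit function theorem applied to $\phi'(t_0, t_2, \dots, t_d) = 0$ one has $\partial_{t_k} t_0 = -\,\partial_{t_1}\partial_{t_k}\phi \big/ \partial_{t_1}^2 \phi$ evaluated at $t_0$, which is $\ll (Y/(Z X_k))/(Y/Z^2) = Z/X_k$; iterating with the chain rule and the derivative hypotheses on $\phi$ shows $t_0 = t_0(t_2,\dots,t_d)$ varies at scale $Z$ as $t_k$ varies at scale $X_k$, so $w(t_0(\cdot), t_2, \dots, t_d)$ is $X$-inert. Likewise $\rho(t_2,\dots,t_d) = Z^2 \phi''(t_0(\cdot),\cdot)/Y \asymp 1$ is $X$-inert (its $t_k$-derivative is $\ll (Y/(Z^2 X_k))/(Y/Z^2) = 1/X_k$, using $\partial_{t_1}^3\phi \ll Y/Z^3$ together with the bound on $\partial_{t_1}^2\partial_{t_k}\phi$), hence $\rho^{-1/2}$ is $X$-inert; since products and compositions of $X$-inert functions are $X$-inert, so is $W$. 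I expect the main obstacle to be exactly this bookkeeping: one must check that the Morse change of variables $s = s(u)$ — together with its inverse, the Jacobian $u'(s)$, and the composition $\widetilde w(u(s))$ — has all of its $u$-derivatives \emph{and} all of its derivatives in the suppressed parameters bounded at the expected scales uniformly over the family, so that after the $\sqrt R$ rescaling one genuinely lands in a fixed bounded family of test functions; the hypothesis $Y \ge R X^2$ is precisely what guarantees this, and once it is in place the remaining error terms (the non-stationary tail and the Fresnel remainder) are each $O_A(Z R^{-A})$ after relabeling $A$. This is essentially the content of the treatments in \cite{KPY} and \cite{BKY}.
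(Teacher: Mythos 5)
The paper does not actually prove Proposition~\ref{prop:statphase}; it outsources it to \cite{KPY} and \cite{BKY}, which obtain it via the full finite asymptotic expansion $\sum_n p_n(t_0)$ and then observe that the truncated sum forms an inert family. Your route---normalize $t_1=Zu$, split near/far at $\delta\asymp\sqrt{R/Y}$, kill the far piece by integration by parts, bring the near piece to Morse normal form and rescale to a Fresnel integral---is a legitimate alternative, and the implicit-function-theorem argument for the inertness of $t_0(\cdot)$, $w(t_0(\cdot),\cdot)$ and $\rho(\cdot)$ is sound. However, the Fresnel step has a genuine gap. The asymptotic you invoke, $\int e^{\pm iR\sigma^2}G(\sigma)\,d\sigma=\sqrt{\pi/R}\,e^{\pm i\pi/4}G(0)+O_A(R^{-A})$, is false for a general bounded $C_c^\infty$ family: keeping only the $G(0)$ term the error is of size $\asymp R^{-3/2}$ (e.g.\ for $G$ even with $G''(0)\ne 0$). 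Your sketched justification---isolating $G(0)$ and integrating by parts away from the origin---removes only one order of vanishing at $\sigma=0$, after which the stationary point re-enters and the iteration stalls at $O(R^{-3/2})$. To reach $O_A(R^{-A})$ you must retain the truncated expansion $\sqrt{\pi/R}\,e^{\pm i\pi/4}\sum_{j\le J}c_j R^{-j}G^{(2j)}(0)$ with $J\asymp A$, and then verify that each of these terms, traced back through the Morse change of variables to derivatives of $w$ and $\phi$ at $(t_0,t_2,\dots,t_d)$, assembles into an inert piece of $W$ (the hypothesis $R\ge1$ makes the $R^{-j}$ factors harmless). This is exactly the bookkeeping that \cite{BKY} carries out and \cite{KPY} repackages as inertness; your identification of $W$ with the leading term $\sqrt{2\pi/\rho}\,e^{\pm i\pi/4}w(t_0,\cdot)$ alone is incomplete.

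A secondary, fixable point: the Morse map $s=(u-u_0)\sqrt{|q(u)|}$ is a diffeomorphism only if $|(u-u_0)q'(u)|<2|q(u)|$ holds on the near piece, which forces $\delta$ to lie below an absolute constant $c_0$ determined by the implied constants in the derivative hypotheses on $\phi$; the choice $\delta=\sqrt{R/Y}$ only guarantees $\delta\le 1/X\le1$. Take $\delta=\min(c_0,\sqrt{R/Y})$ instead---the far piece still gains $\gg R$ per integration by parts (at scale $\Delta\asymp1$ the gain is $\gg Y/X\ge RX\ge R$), so the error stays $O_A(R^{-A})$. Neither issue undermines the strategy, but as written the argument does not close.
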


\subsection{Voronoi Summation Formula}
The $GL_3$ Voronoi summation formula was first obtained by Miller and Schmid  (\cite{MS}). We restate \cite[Theorem 1.18]{MS} as a lemma here, changing notation to match ours.

\begin{mylemma} \label{lem:gl3voronoi}
 Let $f$ be a Hecke cusp form for $SL_3(\mz)$, whose $(m,n)^{\text{th}}$ Fourier coefficient is denoted by $\lambda_f(m,n)$. Let $\psi$ be a smooth compactly-supported function on $\mathbb{R}_{>0}$. Assume $(a,c)=1$.  Then
 \begin{equation*}
\sum_n \lambda_f(1,n) e_c(an) \psi(n)
= 
\sum_{\eta \in \{\pm 1 \}}
c \sum_{n_1 | c} \sum_{n_2 > 0} \frac{\lambda_f(n_2, n_1)}{n_1 n_2} S(\eta \overline{a}, n_2 ; c/n_1) \Psi_{\eta}\Big(\frac{n_1^2 n_2}{c^3}\Big),
\end{equation*} 
where the function $\Psi_{\eta}$ is defined the same way as $F$ in \cite[Theorem 1.18]{MS}.
\end{mylemma}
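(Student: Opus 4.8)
The plan is to \emph{derive} this from \cite[Theorem 1.18]{MS} by a change of variables and notation rather than to reprove the $GL_3$ Voronoi formula from scratch. (A self-contained route is also available and worth recording: apply Mellin inversion to $\psi$, so that the left-hand side becomes a contour integral of the additively twisted Dirichlet series $\sum_n \lambda_f(1,n) e_c(an) n^{-s}$; this series has meromorphic continuation and a functional equation relating the twist by $a/c$ to a dual series twisted by $\overline{a}/c$ with Kloosterman-type arithmetic factors and a ratio of archimedean $\Gamma$-factors attached; shifting the contour past the relevant poles and recognizing the $\Gamma$-ratio as a Hankel-type integral transform of $\psi$ produces exactly the right-hand side. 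Since \cite{MS} carry this out in full generality, it is cleanest to quote them.)

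Concretely, the steps are: (i) recall Miller--Schmid's statement, in which the coefficients are indexed as in the Fourier--Whittaker expansion of $f$ on $GL_3$, the dual sum runs over $n_1 \mid c$ and $n_2 \geq 1$, the arithmetic weight is a Kloosterman sum to modulus $c/n_1$, the argument of the transform is (up to their normalization) $n_1^2 n_2 / c^3$, and there is an overall factor of $c$ together with $n_1 n_2$ in the denominator; (ii) identify $\lambda_f(1,n)$ with the $n$-th Dirichlet coefficient of $L(f,s)$, normalized so that $\lambda_f(1,1) = 1$, and keep track of the transpose symmetry, since the dual coefficient is written here as $\lambda_f(n_2, n_1)$ rather than $\lambda_f(n_1, n_2)$; (iii) define $\Psi_\eta$ to be exactly the function called $F$ in \cite[Theorem 1.18]{MS} attached to the sign $\eta \in \{\pm 1\}$, so that no further analysis of the transform is needed at this stage (the paper only uses $\Psi_\eta$ later through its known oscillatory behavior).

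The only genuine content is the bookkeeping in steps (i)--(ii), and this is also the only place an error could creep in: different references (Goldfeld--Li, Miller--Schmid, Blomer, Li) normalize the $GL_3$ Voronoi formula with slightly different placements of powers of $c$, of $n_1$ and $n_2$, and of $2\pi$, and with $a$ versus $\overline{a}$ inside the Kloosterman sum. One must fix conventions once and for all --- here, matching the Hecke-normalized Fourier coefficients and the standard archimedean $L$-factor of $f$ --- and then check that the displayed formula is the faithful transcription of \cite{MS} under those conventions, paying particular attention to the exponents in $n_1^2 n_2 / c^3$ and to the modulus $c/n_1$. This is routine but should be done carefully.
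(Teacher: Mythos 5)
Your proposal is correct and matches the paper's approach exactly: the paper does not prove the Voronoi formula at all, but simply restates \cite[Theorem 1.18]{MS} with adjusted notation, defining $\Psi_\eta$ to be the function $F$ appearing there. The bookkeeping you describe in steps (i)--(iii) is precisely the content of the paper's one-line justification, and your remark about normalization conventions is the right thing to be careful about but introduces no new idea.
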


Combining the work of Xiaoqing Li \cite{Li} and Blomer \cite{blomer2012subconvexity}, we have the following useful approximate integral representation for $\Psi_{\eta}$.
\begin{mylemma}\label{lem:VoronoiAsymptoticExpansion}
    Let $\varepsilon>0$ and $R\geq1$. Suppose $\psi$ has compact support on $[X,2X]$. For each $A > 0$ there exist $R$-inert functions $w = w_{A,X, \eta,\pm}$ such that 
    \begin{align*}
        \Psi_{\eta}(x) = \sum_{\pm} \frac{x^{2/3}}{X^{1/3}} \int_0^{\infty} \psi(y) e(\pm 3 x^{1/3} y^{1/3}) 
        w(x,y) dy
        + O(\|\psi\|_1R^{-A}).
    \end{align*}
\end{mylemma}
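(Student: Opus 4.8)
The plan is to deduce the lemma from the Mellin--Barnes representation of $\Psi_\eta$ due to Miller--Schmid, combined with the stationary-phase analysis of the $GL_3$ Bessel kernel carried out by Li and Blomer. First I would record the starting point: by \cite[Theorem 1.18]{MS}, in the normalisation of Lemma~\ref{lem:gl3voronoi} (and up to an inessential constant), there is an explicit meromorphic function $\gamma_\eta(s)$ --- a ratio of six Gamma factors built from the Langlands parameters of $f$ and the sign $\eta$ --- with
\begin{equation*}
  \Psi_\eta(x)=x\cdot\frac{1}{2\pi i}\int_{(\sigma)}(\pi^3x)^{-s}\,\gamma_\eta(s)\,\widetilde\psi(1-s)\,ds,\qquad \widetilde\psi(w)=\int_0^\infty\psi(y)\,y^{w-1}\,dy,
\end{equation*}
for suitable $\sigma$. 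Inserting the definition of $\widetilde\psi$ and interchanging the integrations --- legitimate by the rapid decay of $\widetilde\psi$ on vertical lines together with the polynomial size of $\gamma_\eta$ --- reduces matters to the $GL_3$ Bessel kernel:
\begin{equation*}
  \Psi_\eta(x)=x\int_0^\infty\psi(y)\,\mathcal K_\eta(\pi^3xy)\,dy,\qquad \mathcal K_\eta(z)=\frac{1}{2\pi i}\int_{(\sigma)}z^{-s}\,\gamma_\eta(s)\,ds.
\end{equation*}

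Next I would invoke the behaviour of $\mathcal K_\eta$. By Stirling's formula $\gamma_\eta(\sigma+it)$ has polynomial modulus and a phase with $t$-derivative $\pm3\log|t|+O(1)$, so the phase of $z^{-s}\gamma_\eta(s)$ has, for each sign of $t$, a single non-degenerate stationary point at $|t_0|\asymp z^{1/3}$; the resulting stationary-phase expansion --- precisely the computation of Li \cite{Li}, upgraded by Blomer \cite{blomer2012subconvexity} to an inert statement with a power-saving error --- reads, for $z\ge1$,
\begin{equation*}
  \mathcal K_\eta(z)=z^{-1/3}\sum_{\pm}c_{\eta,\pm}\,e\big(\pm3(z/\pi^3)^{1/3}\big)\,V_\pm(z)+O_A\big(z^{-A}\big),
\end{equation*}
where $c_{\eta,\pm}$ depends only on $f$, the error comes from truncating the asymptotic series in $z^{-1/3}$ after enough terms, and each $V_\pm$ is $1$-inert --- immediate once one notes that, after the factor $e(\pm3(z/\pi^3)^{1/3})$ is removed, $\mathcal K_\eta$ oscillates exactly on scale $z^{1/3}$, so $z\partial_z$ preserves inertness. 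For $z$ of bounded or polynomial-in-$R$ size one instead keeps $\mathcal K_\eta$ intact and notes that $z\partial_z$ applied to $z^{1/3}e(\mp3(z/\pi^3)^{1/3})\mathcal K_\eta(z)$ gains only $O(1+z^{1/3})$.

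Finally I would substitute this back. With $z=\pi^3xy$ and $y\asymp X$, one has $z^{-1/3}=\pi^{-1}x^{-1/3}X^{-1/3}(y/X)^{-1/3}$ and $(z/\pi^3)^{1/3}=x^{1/3}y^{1/3}$, so the main term contributes exactly
\begin{equation*}
  \sum_\pm\frac{x^{2/3}}{X^{1/3}}\int_0^\infty\psi(y)\,e\big(\pm3x^{1/3}y^{1/3}\big)\,w(x,y)\,dy,\qquad w(x,y)=\pi^{-1}c_{\eta,\pm}\,(y/X)^{-1/3}\,V_\pm(\pi^3xy),
\end{equation*}
which has the claimed shape; since $x\partial_x$ and $y\partial_y$ act on $V_\pm(\pi^3xy)$ only through $z\partial_z$, $w$ is $R$-inert (in fact $1$-inert on $xy\gg R^3$). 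To cover all $x$ one inserts a smooth partition of $[X,2X]$ according to whether $xy\gtrless R^{3+\varepsilon}$: on the large piece the display above applies and the $O_A$-error, integrated against $\psi$, is $\ll_A\|\psi\|_1(xX)^{-A}\ll\|\psi\|_1R^{-A}$; on the small piece one puts all of $x\,\mathcal K_\eta(\pi^3xy)$ into the $+$-term using the second description of $\mathcal K_\eta$ above, obtaining an $R$-inert $w$ with no error; and the genuinely degenerate range, where $xX$ is smaller than a fixed power of $R$, is disposed of by a contour shift giving $\Psi_\eta(x)\ll_B\|\psi\|_1(xX)^B$, which is $\ll\|\psi\|_1R^{-A}$ in the regime $R=X^{o(1)}$ in which the lemma is used. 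Adjusting $R$ by a power of $X^\varepsilon$ then yields the statement.

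The hard part will be the Bessel-kernel expansion in the second paragraph: extracting it in genuine inert form, with a true power-saving remainder rather than a formal asymptotic series in $z^{-1/3}$, and keeping track of the lower-order terms (all of the same shape, but of decreasing size) and of the transition/small-argument behaviour. This is exactly Blomer's sharpening of Li's stationary-phase analysis; the surrounding steps --- the Mellin inversion, the interchange of integrals, and the re-assembly into an inert function --- are routine.
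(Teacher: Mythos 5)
Your overall route is the same as the paper's --- Li's stationary-phase expansion of the Bessel kernel for large argument, Blomer's inert control for small argument, and a smooth partition to glue them --- but you present it as a from-scratch derivation via the Mellin--Barnes representation, and in doing so you introduce a genuine gap in the small-argument regime.

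The paper's proof is short precisely because it needs \emph{only two} cases: for $xX\ge\sqrt R$ it quotes Li's expansion with an $O(\|\psi\|_1 R^{-A})$ tail, and for $xX\le R$ it quotes Blomer's Lemma~7, which already gives $\Psi_\eta(x)=\frac{x^{2/3}}{X^{1/3}}\int\psi(y)F_{X,\eta}(x,y)\,dy$ for an $R$-inert $F_{X,\eta}$ with \emph{no} error term; one then multiplies and divides by $e(\pm3x^{1/3}y^{1/3})$, which is harmless since the phase is $\ll R^{1/3}$. There is no leftover degenerate range.

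Your write-up splits into \emph{three} pieces, and the third one breaks down. After handling $xy\gg R^{3+\varepsilon}$ by stationary phase and $xy\ll R^{3+\varepsilon}$ by keeping $\mathcal K_\eta$ intact, you invoke a further ``genuinely degenerate range'' where $xX$ is smaller than a fixed power of $R$ and claim a contour shift yields $\Psi_\eta(x)\ll_B\|\psi\|_1(xX)^B$. This bound is not available: shifting the $s$-contour left one hits the poles of $\gamma_\eta(s)$ coming from the Langlands parameters of $f$, which (even under Ramanujan) sit near $\mathrm{Re}(s)=0$, so one cannot gain an arbitrary power of $xX$. Moreover, even granting such a bound, $(xX)^B\ll R^{-A}$ fails whenever $xX\ge1$, which certainly occurs in the range you are trying to cover; your escape hatch ``in the regime $R=X^{o(1)}$'' imports an extra hypothesis the lemma does not have and the paper does not need. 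The fix is simply to drop this third case and let the $xy\ll R^{3+\varepsilon}$ (equivalently $xX\lesssim R^{3+\varepsilon}$) piece absorb it, but to do that rigorously you need the derivative bounds on $\mathcal K_\eta(z)$ for small and moderate $z$ --- and those bounds are not ``immediate,'' they are exactly the content of Blomer's Lemma~7, which your ``second description of $\mathcal K_\eta$'' asserts without proof. In short: cite Blomer's small-argument result directly (as the paper does) instead of a contour shift, and the argument closes; as written, the degenerate-range step is both unjustified and, as stated, false.
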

\begin{proof}
    For $xX\geq \sqrt{R}$, we apply the asymptotic expansion for $\Psi_{\eta}$ derived by Xiaoqing Li in \cite[(2.5), Lemma 2.1]{Li}. This yields 
    \begin{equation} 
    \label{eq:xXlarge}
        \Psi_{\eta}(x) = \sum_{\pm} \frac{x^{2/3}}{X^{1/3}} \int_0^{\infty} \psi(y) e(\pm 3 x^{1/3} y^{1/3}) 
        w(x,y) dy
        + O(\|\psi\|_1R^{-A}),
    \end{equation}
    where $A>0$ is arbitrarily large and 
    $w$ is $R$-inert.
    
    On the other hand, for $xX\leq R$, \cite[Lemma 7]{blomer2012subconvexity} with a change of variable implies that 
    \begin{align*}
        \Psi_{\eta}(x) = \frac{x^{2/3}}{X^{1/3}} \int_0^{\infty} \psi(y) F_{X,\eta}(x, y) dy,
    \end{align*}
    for some $R$-inert function $F_{X,\eta}$. 
   We may artificially multiply by $e(\pm 3 x^{1/3} y^{1/3}) e(\mp 3 x^{1/3} y^{1/3})$, and since $xX \leq R$, the latter factor may be absorbed into the inert weight function.

The only slight logical issue remaining is that the inert function breaks into cases depending on whether $xX \geq \sqrt{R}$ or $x X \leq R$, which may appear to cause a discontinuity with respect to $x$.  However, it is easy to apply a smooth partition of unity to give a unified formula for $w$ which is $R$-inert with respect to $x$.
\end{proof}

\subsection{The Delta Method}
We state here the version of the delta symbol we use in this paper. This is a restatement of \cite[Lemma 3.1]{LeungShiftedSum}.

\begin{mylemma}\label{DeltaCor}
Let $n \in \mathbb{Z}$ be such that $|n| \ll N$, and let $C > N^{\epsilon}$. Let $U \in C_{c}^{\infty}(\mathbb{R})$ and $W \in C_{c}^{\infty}([-2, -1] \cup [1, 2])$ be nonnegative even functions such that $U(x) = 1$ for $x \in [-2, 2]$. Then  
\begin{equation*}
\delta(n = 0) = \frac{1}{C} \sum_{c = 1}^{\infty}\sum_{d = 1}^{\infty} \frac{1}{cd} 
S(0,n;c) F\left(\frac{cd}{C}, \frac{n}{cdC} \right),
\end{equation*}
where
\begin{equation}
\label{eq:Fdef}
F(x, y) \coloneqq C\left(\sum_{c = 1}^{\infty} W \left(\frac{c}{C} \right)\right)^{-1}\left(W(x) U(x) U(y)-W(y) U(x) U(y)\right)
\end{equation}
is a smooth function supported on $|x| + |y| \ll 1$ and satisfying $F^{(i,j)}(x,y) \ll_{i,j} 1$.
\end{mylemma}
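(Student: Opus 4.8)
The plan is to prove the displayed identity directly, as an exact identity valid for every $n\in\mathbb Z$, by extracting the arithmetic from the Ramanujan sums $S(0,n;c)$ and using the antisymmetry of $F$. First I would record the shape of $F$: since $U$ and $W$ are even, \eqref{eq:Fdef} reads $F(x,y)=\kappa\,(W(x)-W(y))\,U(x)U(y)$ with $\kappa=C\bigl(\sum_{c\ge1}W(c/C)\bigr)^{-1}$, so that $F$ is even in each variable and satisfies $F(x,y)=-F(y,x)$. Because $W$ is nonnegative and not identically zero and $C$ is large, $\sum_{c\ge1}W(c/C)\asymp C$, hence $\kappa\asymp1$; together with the compact support of $U$ and $W$, this shows at once that $F$ is supported where $x$ and $y$ both lie in $\operatorname{supp}U$ (so on $|x|+|y|\ll1$) and that $F^{(i,j)}\ll_{i,j}1$. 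It then remains to prove the displayed identity, whose right-hand side is a finite sum since $F$ has compact support.

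The key point is that in
\[
\frac1C\sum_{c,d\ge1}\frac1{cd}\,S(0,n;c)\,F\!\left(\tfrac{cd}{C},\tfrac n{cdC}\right)
\]
the factor $F$ depends on $c$ and $d$ only through the product $m=cd$. Grouping the double sum according to the value of $m$ and writing $d=m/c$, so that $\tfrac1{cd}=\tfrac1m$, the summand becomes $\tfrac1m\,S(0,n;c)\,F\!\left(\tfrac mC,\tfrac n{mC}\right)$, and the sum over divisors $c\mid m$ falls entirely on $S(0,n;c)$; one is left with $\tfrac1C\sum_{m\ge1}\tfrac1m F\!\left(\tfrac mC,\tfrac n{mC}\right)\sum_{c\mid m}S(0,n;c)$. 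I would then invoke the elementary evaluation
\[
\sum_{c\mid m}S(0,n;c)=\sum_{c\mid m}\ \sideset{}{^*}\sum_{a\bmod c}e_c(na)=\sum_{b\bmod m}e_m(nb)=m\,\delta(m\mid n),
\]
in which the middle equality is the bijection between residue classes $b\bmod m$ and reduced fractions $a/c$ with $c\mid m$ (write $b/m$ in lowest terms), and the last is orthogonality of additive characters modulo $m$. Hence the whole expression collapses to $\tfrac1C\sum_{m\mid n}F\!\left(\tfrac mC,\tfrac n{mC}\right)$.

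To finish I would split into cases. If $n\ne0$, the involution $m\mapsto|n|/m$ of the (finite) set of positive divisors of $|n|$ sends $\bigl(\tfrac mC,\tfrac n{mC}\bigr)$ to $\bigl(\tfrac{|n|}{mC},\pm\tfrac mC\bigr)$, and by evenness in each variable followed by antisymmetry one gets $F\!\left(\tfrac{|n|}{mC},\pm\tfrac mC\right)=-F\!\left(\tfrac mC,\tfrac n{mC}\right)$; thus the terms cancel in pairs, a fixed point (present only if $|n|$ is a perfect square) contributing $0$ since $W(x)-W(x)=0$. Hence the sum vanishes, matching $\delta(n=0)=0$. If $n=0$, then $F\!\left(\tfrac mC,0\right)=\kappa\,W\!\left(\tfrac mC\right)U\!\left(\tfrac mC\right)=\kappa\,W\!\left(\tfrac mC\right)$, because $W(0)=0$ and $U\equiv1$ on $\operatorname{supp}W\subseteq[-2,2]$, and summing over $m\ge1$ and recalling the definition of $\kappa$ yields $\tfrac1C\cdot\kappa\sum_{m\ge1}W(m/C)=1=\delta(0=0)$. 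I do not expect a genuine obstacle here: the whole argument is a short manipulation, the only substantive inputs being the Ramanujan-sum identity above — where all the arithmetic sits — and the easy bound $\kappa\asymp1$ needed for the asserted size of $F$ and its derivatives; the smooth cutoffs $U$ are inert passengers that either cancel against their involution image (when $n\ne0$) or equal $1$ on the support of $W$ (when $n=0$).
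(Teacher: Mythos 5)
Your proof is correct and self-contained. The paper itself does not prove Lemma \ref{DeltaCor}: it states it as a restatement of \cite[Lemma 3.1]{LeungShiftedSum} and refers the reader there, so there is no in-paper argument to compare against. Your argument is exactly the right elementary route: collapse the $c,d$-sum to a sum over $m=cd$, invoke the classical identity
\begin{equation*}
\sum_{c\mid m} S(0,n;c)=\sum_{b \bmod m} e_m(nb)= m\,\delta(m\mid n),
\end{equation*}
and then exploit the antisymmetry $F(x,y)=-F(y,x)$ together with evenness in each variable. The two endgames are handled cleanly: for $n\neq 0$ the divisor involution $m\mapsto |n|/m$ pairs up terms with opposite signs (the possible fixed point vanishes since $F(x,x)=0$), and for $n=0$ the normalization constant $\kappa=C\bigl(\sum_{c\geq 1}W(c/C)\bigr)^{-1}$ is designed precisely so that $\frac{1}{C}\sum_{m\geq 1}\kappa W(m/C)=1$, using $W(0)=0$ and $U\equiv 1$ on $\operatorname{supp}W$. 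The auxiliary claims (compact support of $F$, the bound $F^{(i,j)}\ll_{i,j}1$, and $\kappa\asymp 1$ from $\sum_c W(c/C)\asymp C$) are all correctly justified, and the finiteness of the double sum follows from the compact support of $U$, so no convergence issues arise in the regrouping. One small implicit assumption worth making explicit is that $W$ is not identically zero, which you do note; this is needed so that $\kappa$ is well-defined. Overall this is a complete and correct proof of a result the paper leaves as an external citation.
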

We should note that Lemma \ref{DeltaCor} is the Duke–Friedlander–Iwaniec
delta method (cf. \cite{DFI}) with a simpler weight function that restricts the variable $n$ to $|n| \ll cdC$.

It will also be convenient to write $F(x,y) = F_1(x) U_1(y) + F_2(x) U_2(y)$ according to \eqref{eq:Fdef}.

\subsection{Average bounds on Fourier coefficients}
A standard result from Rankin-Selberg theory is that (cf. \cite[Theorem 7.4.9]{Goldfeld})

\begin{equation}
\label{eq:convexityGL3}
\sum_{a^2 b \leq x} |\lambda_f(a,b)|^2 \ll x.
\end{equation}
We record a variant of this which will be useful later.
\begin{mylemma}
\label{lemma:RankinSelbergBound}
We have
\begin{equation*}
\sum_{a^2 b \asymp X} \frac{|\lambda_f(a,b)|^2}{a^3 b^2} \ll X^{-1+\varepsilon}.
\end{equation*}
\end{mylemma}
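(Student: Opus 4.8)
The plan is to derive this from the unweighted Rankin-Selberg bound \eqref{eq:convexityGL3} by a dyadic decomposition and partial summation argument. First I would restrict attention to the dyadic range $a^2 b \asymp X$, and further decompose this range dyadically in the individual variables: write $a \asymp A$ and $b \asymp B$ where $A, B$ range over powers of $2$ subject to the constraint $A^2 B \asymp X$. On such a dyadic block, the weight $\frac{1}{a^3 b^2}$ is essentially constant of size $\asymp \frac{1}{A^3 B^2}$, so the contribution of the block is
\begin{equation*}
\ll \frac{1}{A^3 B^2} \sum_{\substack{a \asymp A \\ b \asymp B}} |\lambda_f(a,b)|^2.
\end{equation*}

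Next I would bound the inner sum. Since $a^2 b \asymp A^2 B \asymp X$ on this block, \eqref{eq:convexityGL3} gives $\sum_{a \asymp A, b \asymp B} |\lambda_f(a,b)|^2 \leq \sum_{a^2 b \ll X} |\lambda_f(a,b)|^2 \ll X$. Therefore the dyadic block contributes $\ll \frac{X}{A^3 B^2}$. Using $B \asymp X/A^2$, this is $\asymp \frac{X}{A^3 (X/A^2)^2} = \frac{X \cdot A^4}{A^3 X^2} = \frac{A}{X}$. Now I sum over the admissible dyadic values of $A$: these satisfy $1 \ll A \ll \sqrt{X}$ (so that $B \gg 1$), and the sum $\sum_{A \ll \sqrt{X}} \frac{A}{X}$ is dominated by its largest term, giving $\ll \frac{\sqrt{X}}{X} = X^{-1/2}$. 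The number of dyadic pairs $(A,B)$ is $\ll \log^2 X \ll X^{\varepsilon}$, which is absorbed into the $X^{\varepsilon}$; in fact this crude argument even yields the stronger bound $X^{-1/2+\varepsilon}$, so the claimed $X^{-1+\varepsilon}$ follows a fortiori.

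I do not expect any real obstacle here; the only mild subtlety is bookkeeping the dyadic ranges correctly and confirming that the weight $a^{-3}b^{-2}$ together with the constraint $a^2 b \asymp X$ makes the geometric sum over the finer dyadic blocks converge at the endpoint rather than diverge. If one wanted the sharp exponent matching the weight, one could instead invoke \eqref{eq:convexityGL3} in the sharper form $\sum_{a^2 b \leq x}|\lambda_f(a,b)|^2 \ll x$ applied at scale $x \asymp A^2 B$ for each block and be slightly more careful, but for the stated bound the crude estimate above is more than sufficient. An alternative, essentially equivalent, route is to write the sum as a Stieltjes integral $\int t^{-?} \, d\big(\sum_{a^2b \le t}|\lambda_f(a,b)|^2\big)$ against the appropriate power of $a^2 b$ and integrate by parts using \eqref{eq:convexityGL3}, but the dyadic version is cleaner to present.
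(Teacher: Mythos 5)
Your argument contains a sign-of-exponent error that is fatal. You correctly compute that your dyadic-block bound yields $\ll X^{-1/2+\varepsilon}$, but then conclude this is ``even stronger'' than the target $X^{-1+\varepsilon}$ and that the claim ``follows a fortiori.'' This is backwards: for large $X$ we have $X^{-1/2} \gg X^{-1}$, so $X^{-1/2+\varepsilon}$ is a \emph{weaker} upper bound, and it does not imply $X^{-1+\varepsilon}$. So your proof establishes only $X^{-1/2+\varepsilon}$ and falls short of the lemma by a factor of $\sqrt{X}$.

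The loss is structural, not just a bookkeeping issue. On the hyperbola $a^2 b\asymp X$ one has $\tfrac{1}{a^3 b^2}\asymp\tfrac{a}{X^2}$, so the left-hand side is $\asymp X^{-2}\sum_{a^2 b\asymp X} a\,|\lambda_f(a,b)|^2$. To reach $X^{-1+\varepsilon}$ you need $\sum_{a^2 b\asymp X} a\,|\lambda_f(a,b)|^2\ll X^{1+\varepsilon}$, i.e.\ the extra factor of $a$ must come essentially for free; this requires knowing that the $\lambda_f(a,b)$-mass on the hyperbola is concentrated where $a$ is small, which does not follow from \eqref{eq:convexityGL3} alone. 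The paper gets around this by first applying the Hecke relation \eqref{eq:HeckeRelation}, which (together with Cauchy--Schwarz) separates the two indices: it reduces the problem to one-variable sums $\sum_{a}|\lambda_f(a,1)|^2/a$ and $\sum_b |\lambda_f(1,b)|^2/b^2$, and then uses that both $\sum_{a\leq A}|\lambda_f(a,1)|^2$ and $\sum_{b\leq B}|\lambda_f(1,b)|^2$ grow \emph{linearly} in the length (the former via the symmetry $\lambda_f(a,1)=\overline{\lambda_f(1,a)}$), not quadratically as the naive specialization $b=1$ of \eqref{eq:convexityGL3} would suggest. That extra saving of a power in the $a$-variable is exactly the $\sqrt{X}$ your direct dyadic argument cannot produce. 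So the Hecke-relation step is not an optional refinement; it is the key input, and without it (or some equivalent separation-of-variables mechanism) the dyadic/partial-summation route does not reach the stated exponent.
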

\begin{proof}
Recall the Hecke relation (see \cite[Thm. 6.4.11]{Goldfeld} and use M\"obius inversion)
\begin{equation}
\label{eq:HeckeRelation}
\lambda_f(a,b) = \sum_{d|(a,b)} \mu(d) \lambda_f(a/d, 1) \lambda_f(1, b/d).
\end{equation}
Using \eqref{eq:HeckeRelation}, Cauchy's inequality, and \eqref{eq:convexityGL3}, we deduce
\begin{align*}
\sum_{a^2 b \asymp X} \frac{|\lambda_f(a,b)|^2}{a^3 b^2} &\ll
X^{\varepsilon}
\sum_{a^2 b \asymp X} \frac{1}{a^3 b^2} \sum_{d | (a,b)} |\lambda_f(a/d, 1)|^2 \cdot |\lambda_f(1, b/d)|^2
\\
&\ll 
X^{\varepsilon} \sum_{d^3 \ll X} \frac{1}{d^5} \sum_{a^2 \ll X/d^3} \frac{|\lambda_f(a,1)|^2}{a^3} \sum_{b \asymp \frac{X}{a^2 d^3}} \frac{|\lambda_f(1,b)|^2}{b^2}
\\
&\ll X^{-1+\varepsilon} \sum_{d^3 \ll X} \frac{1}{d^2} \sum_{a^2 \ll X/d^3} \frac{|\lambda_f(a,1)|^2}{a}
\ll X^{-1 +\varepsilon}. \qedhere
\end{align*}
\end{proof}

\subsection{Large Sieve Inequality}
We recall a hybrid form of the large sieve inequality:
\begin{mylemma} \cite[Theorem 2]{Gallagher}
\label{lem:largesieve}
Let $Y \geq 1$, and let $d$ be a positive integer.  Then for arbitrary complex numbers $a_n$ we have
\begin{equation*}
\sum_{\psi \shortmod{d}}
\thinspace
\int_{-Y}^{Y}
\Big|\sum_{n \leq N} a_n \psi(n) n^{iy} \Big|^2 dy
\ll (  d  Y + N) \sum_{n \leq N} |a_n|^2.
\end{equation*}
\end{mylemma}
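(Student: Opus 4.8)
The plan is to prove the estimate along the classical route of Fej\'er smoothing combined with orthogonality of Dirichlet characters, which is in essence Gallagher's argument. First I would replace the integral over $[-Y,Y]$ by an integral over all of $\mathbb{R}$ against a fixed nonnegative even weight $\Phi$ --- a rescaled Fej\'er kernel, e.g.\ $\Phi(y)=\big(\tfrac{\sin(y/(2Y))}{y/(2Y)}\big)^2$ --- chosen so that $\Phi(y)\gg 1$ for $|y|\le Y$, so that $\int_{\mathbb{R}}\Phi\ll Y$, and, crucially, so that $\widehat{\Phi}$ is supported in an interval of length $\ll 1/Y$ about the origin. This gives
\[
\sum_{\psi\shortmod{d}}\int_{-Y}^{Y}\Big|\sum_{n\le N}a_n\psi(n)n^{iy}\Big|^2dy \ll \sum_{\psi\shortmod{d}}\int_{\mathbb{R}}\Phi(y)\Big|\sum_{n\le N}a_n\psi(n)n^{iy}\Big|^2dy.
\]

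Next I would open the square, interchange the (finite) sums with the integral, and evaluate. The $y$-integral of $(m/n)^{iy}\Phi(y)$ equals $\widehat{\Phi}$ evaluated at $\tfrac{1}{2\pi}\log(n/m)$, which is $\ll Y$ in absolute value and, by the support condition on $\widehat{\Phi}$, vanishes unless $|\log(m/n)|\ll 1/Y$; since $m,n\le N$ and $Y\ge1$, this forces $|m-n|\ll N/Y$. Carrying out the sum over all $\psi\bmod d$ and invoking orthogonality contributes a factor $\varphi(d)$ and restricts to pairs with $(mn,d)=1$ and $m\equiv n\pmod d$.

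It then remains to bound the resulting diagonal-type double sum crudely. Using $|a_m\overline{a_n}|\le\tfrac12(|a_m|^2+|a_n|^2)$, the whole quantity is $\ll\varphi(d)\,Y\cdot\big(\max_{n\le N}\#\{m\le N: m\equiv n\!\!\pmod d,\ |m-n|\ll N/Y\}\big)\sum_{n\le N}|a_n|^2$, and this last count is $\ll 1+\tfrac{N}{dY}$ (the diagonal $m=n$ being automatically included). Hence the expression is $\ll\varphi(d)\,Y\big(1+\tfrac{N}{dY}\big)\sum_{n\le N}|a_n|^2\ll(dY+N)\sum_{n\le N}|a_n|^2$, using $\varphi(d)\le d$.

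The argument is routine; the one point deserving care --- and the heart of the matter --- is the existence of a single weight $\Phi$ that simultaneously dominates $\mathbf{1}_{[-Y,Y]}$ and has Fourier support of length $\ll 1/Y$: this is the uncertainty-principle trade-off realized by the Fej\'er kernel, and it is precisely what couples the archimedean localization $|m-n|\ll N/Y$ to the arithmetic congruence $m\equiv n\pmod d$ so as to keep the off-diagonal contribution small. An alternative, closer to Gallagher's original presentation, is to apply his Sobolev-type lemma to pass from the $y$-integral to a discrete sum over $\ll Y$ points and then invoke the large sieve inequality for Dirichlet characters of the single modulus $d$.
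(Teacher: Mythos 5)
Your argument is correct, and it closes a gap the paper does not attempt to fill: the paper simply cites Gallagher's Theorem~2 for Lemma~\ref{lem:largesieve} and gives no proof. Your Fej\'er-kernel route is the standard clean way to get the hybrid large sieve for a single modulus. The weight $\Phi(y)=\bigl(\tfrac{\sin(y/2Y)}{y/2Y}\bigr)^2$ indeed has the three properties you need: $\Phi(y)\gg1$ for $|y|\le Y$ (since $|\sin t/t|\ge 2\sin(1/2)$ for $|t|\le 1/2$), $\int_{\mathbb R}\Phi\ll Y$, and $\widehat\Phi$ is a scaled triangle function supported in $|\xi|\ll 1/Y$ with $\|\widehat\Phi\|_\infty\ll Y$. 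Opening the square, the $y$-integral produces $\widehat\Phi\bigl(\tfrac1{2\pi}\log(n/m)\bigr)$, forcing $|\log(m/n)|\ll 1/Y$ and hence $|m-n|\ll \min(m,n)/Y\le N/Y$; summing over all $\psi\bmod d$ contributes $\varphi(d)\,\mathbf 1[m\equiv n\bmod d,\ (mn,d)=1]$; and the near-diagonal count of $m\equiv n\bmod d$ with $|m-n|\ll N/Y$ is $\ll 1+N/(dY)$, giving $\varphi(d)Y\bigl(1+\tfrac{N}{dY}\bigr)\ll dY+N$. This is arguably more elementary and self-contained than Gallagher's original presentation, which passes through his Sobolev-type lemma to discretize the $y$-integral and then invokes the multiplicative large sieve; your version needs only orthogonality of characters to a single modulus together with the Fourier-support trick. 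The only point worth flagging, which you correctly anticipate, is that the deduction $|m-n|\ll N/Y$ from $|\log(m/n)|\ll 1/Y$ uses $Y\ge 1$, so that $e^{O(1/Y)}-1\ll 1/Y$; under the hypothesis $Y\ge 1$ this is fine.
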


\subsection{Twisted Kloosterman sum bounds}
For a Dirichlet character $\chi \pmod{c}$, let
\begin{equation*}
    S_{\chi}(m,n;c) = \sumstar_{t \shortmod{c}} \overline{\chi}(t) e_c(mt + n \overline{t}).
\end{equation*}
It is unfortunate that the twisted Kloosterman sum does not always exhibit square-root cancellation.  The following collection of bounds is satisfactory for our later purposes.
\begin{mylemma}
\label{lemma:WeilTwisted}
Suppose $p$ is prime and $(mn, p) = 1$.
\begin{enumerate}
    \item We have $|S_{\chi}(m,n;p)| \leq 2 p^{1/2}$.
    \item Suppose $p$ is odd,  $j \geq 2$, and $\cond(\chi) \leq p^{j-1}$.  Then
     $|S_{\chi}(m,n;p^j)| \leq 2 p^{j/2}$.
\item Suppose $j \geq 2$.  Then $|S_{\chi}(m,n;p^j)| \leq 2 p^{j-\frac12}$.
\item Suppose $p=2$, $j \geq 2$, and  $\cond(\chi) \leq 2^{j-1}$.  Then 
$|S_{\chi}(m,n;2^j)| \leq 4 \cdot 2^{j/2}$.
\end{enumerate}
\end{mylemma}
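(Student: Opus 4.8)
The plan is to treat part (1) classically and parts (2)--(4) by the standard $p$-adic stationary-phase (Hensel-lifting) analysis of complete exponential sums modulo $p^j$, carrying the twist along carefully. For part (1), since $(mn,p)=1$ the rational function $mx+n\overline{x}$ is non-constant mod $p$ (it has poles at $0$ and $\infty$), so $S_\chi(m,n;p)=\sum_{x\in\mf_p^\times}\overline\chi(x)e_p(mx+n\overline x)$ is bounded by $2\sqrt p$ by Weil's estimate for Kloosterman sums twisted by a multiplicative character; I would simply cite this. For part (3) I would run one step of the reduction. Writing $t=t_1(1+p^{j-1}s)$ with $t_1$ over units mod $p^{j-1}$ and $s$ mod $p$, one has $\overline\chi(t)=\overline\chi(t_1)e_p(-\psi s)$ for a suitable $\psi=\psi(\chi)$ (a character of the order-$p$ group $1+p^{j-1}\mz/p^j\mz$ is automatically additive), while $mt+n\overline t\equiv (mt_1+n\overline{t_1})+p^{j-1}s(mt_1-n\overline{t_1})\pmod{p^j}$ since $p^{2j-2}\equiv 0\pmod{p^j}$ for $j\ge2$. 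The $s$-sum then forces the quadratic congruence $mt_1^2-\psi t_1-n\equiv0\pmod p$, which pins $t_1$ to at most two residue classes mod $p$; hence at most $2p^{j-2}$ values of $t_1$ survive, each of modulus $1$, and with the factor $p$ from the $s$-sum this gives $|S_\chi(m,n;p^j)|\le2p^{j-1}\le2p^{j-1/2}$. The argument is identical for $p=2$.

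For part (2) the plan is to iterate this once more, and this is where the hypothesis $\cond(\chi)\le p^{j-1}$ is used. It allows us to realize $\chi$ as a character modulo $p^{j-1}$ composed with reduction, say $\chi(t)=\chi_0(t\bmod p^{j-1})$, so that after the additive split $t=t_1+p^{j-1}s$ the twist $\chi_0(t_1)$ is independent of $s$ and the $s$-sum forces the clean, automatically nondegenerate congruence $mt_1^2\equiv n\pmod p$. If $n\overline m$ is a nonresidue the whole sum vanishes; otherwise $t_1\equiv\pm\rho\pmod p$ with $\rho^2\equiv n\overline m$, and a factor $p$ emerges. On each class I would substitute $t_1=\rho_1+pu$ with $u$ mod $p^{j-2}$ and expand: since $m-n\overline{\rho_1}^2\equiv0\pmod p$ (the stationary-point relation), after extracting $e_{p^j}(m\rho_1+n\overline{\rho_1})$ the remaining $u$-sum has a phase of level $p^{j-2}$ whose leading quadratic coefficient is $n\overline{\rho_1}^3$, a unit. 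The key point is that the twist $\overline{\chi_0}(1+\overline{\rho_1}pu)$ only perturbs the coefficients of $u,u^2,\dots$ at the levels $p^{j-2},p^{j-3},\dots$ respectively, so the quadratic coefficient modulo $p$ remains $n\overline{\rho_1}^3\not\equiv0$. A Gauss-sum estimate (i.e.\ iterated stationary phase) then bounds the $u$-sum by $p^{(j-2)/2}$, and multiplying by the two residue classes and the factor $p$ yields $|S_\chi(m,n;p^j)|\le2p^{j/2}$.

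Part (4) is part (2) in the case $p=2$; I would carry it out the same way, inserting the usual $2$-adic modifications — $(\mz/2^j)^\times$ is not cyclic, one expands to slightly higher $2$-adic precision, and quadratic Gauss sums must be read off modulo $8$ rather than modulo $2$ — all of which cost at most a bounded multiplicative factor, which the constant $4$ in the statement absorbs; alternatively one can quote \cite[Lemma 7.1.1]{Kroesche}. The one step requiring genuine care is the bookkeeping in (2) and (4): checking that the multiplicative twist does not spoil the nondegeneracy of the stationary point. This is exactly what the hypothesis $\cond(\chi)\le p^{j-1}$ buys — it forces the relevant additive ``frequency'' of $\chi$ to be divisible by $p$, so that the discriminant of the stationary-point equation reduces to the unit $4mn$; dropping the hypothesis, as in (3), permits a degenerate stationary point, which is why only the weaker bound $2p^{j-1/2}$ is available there.
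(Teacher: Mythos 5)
Your proposal is correct, but it takes a genuinely different route from the paper. The paper's proof of this lemma is almost entirely a matter of citation: part (1) is attributed to Weil, parts (2)--(4) are referred to \cite[Prop.\ 9.7, 9.8]{KnightlyLi}, the two boundary cases these propositions do not cover (case (3) for $p=2$, and case (4) when $\cond(\chi)=2^{j-1}$) are handled by noting that the former is weaker than the trivial bound $\varphi(2^j)\le 2^j$ and by invoking \cite[(7.1.20)]{Kroesche} for the latter. You instead give a self-contained $p$-adic stationary-phase (Hensel lifting) derivation, and your computations check out: the one-step reduction in (3) via the parametrization $t=t_1(1+p^{j-1}s)$ is correct, the quadratic congruence $mt_1^2-\psi t_1-n\equiv 0\pmod p$ indeed pins down at most two classes, and (noteworthy) you actually obtain the sharper bound $2p^{j-1}\le 2p^{j-1/2}$ there. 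The two-step reduction in (2) is also sound: the hypothesis $\cond(\chi)\le p^{j-1}$ removes $\psi$ from the $s$-sum congruence, and after localizing at $t_1\equiv\pm\rho\pmod p$ and writing $t_1=\rho_1+pu$, the Taylor expansion of $mt_1+n\overline{t_1}$ has $u^2$-coefficient $n\overline{\rho_1}^3$ at level $p^{j-2}$, while the $p$-adic logarithm expansion of $\chi_0(1+\overline{\rho_1}pu)$ perturbs that coefficient only by a multiple of $p$ (and the cubic and higher terms carry extra powers of $p$), so the Hessian stays a unit for $p$ odd; iterating stationary phase then gives $p^{(j-2)/2}$. Your observation about exactly what the conductor hypothesis buys (nondegeneracy of the critical point, discriminant $4mn$) is a nice piece of insight that the paper's citation-based proof does not surface. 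The trade-off is that a fully rigorous write-up of your approach requires nontrivial bookkeeping — the iterated stationary-phase lemma itself, the $p$-adic logarithm structure of characters, and especially the $p=2$ adjustments you allude to (non-cyclic unit group, $\phi''$ containing a factor of $2$, Gauss sums read modulo $8$) — whereas the paper outsources all of that to \cite{KnightlyLi} and \cite{Kroesche}. Both are valid; yours is the more instructive, theirs the shorter.
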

Case (1) was proved by Weil.  Almost all the remaining cases follow from \cite[Prop. 9.7, 9.8]{KnightlyLi}.  The only cases left uncovered are case (3) with $p=2$ and case (4) with $\cond(\chi) = 2^{j-1}$ (Knightly and Li handle $\cond(\chi) \leq 2^{j-2}$).
In this latter case, we use \cite[(7.1.20)]{Kroesche}, which gives the bound
$|S_{\chi}(m,n;2^j)| \leq 2 \cdot 2^{j/2 + \delta}$, where 
$\delta = \frac12 \nu_2(2^{2j} + 4mn)$.  Here $\nu_2(\cdot)$ denotes the usual $2$-adic valuation, and since $(mn,2) = 1$ by assumption we have $\delta=1$.
Finally, case (3) with $p=2$ is worse than the trivial bound of $2^j$.

\subsection{An elementary estimate}
We state the following for later use.
\begin{mylemma}
\label{lemma:radicalsum}
    Let $\mathrm{rad}(n) = \prod_{p|n} p$.  Then for any $q \geq 1$ we have
    \begin{equation*}
        \sum_{n \leq X} \mathrm{rad}(nq)^{-1/2} \ll \mathrm{rad}(q)^{-1/2} X^{1/2} (qX)^{\varepsilon}.
    \end{equation*}
\end{mylemma}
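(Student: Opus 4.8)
The plan is to reduce the sum to a purely multiplicative estimate and handle it by a standard Rankin-type argument. First I would write $\mathrm{rad}(nq) = \mathrm{rad}(n)\,\mathrm{rad}(q)/\mathrm{rad}((n,q^\infty))$, where $(n,q^\infty)$ denotes the largest divisor of $n$ built only from primes dividing $q$; equivalently, factor $n = ab$ with $a \mid q^\infty$ (i.e.\ $\mathrm{rad}(a) \mid \mathrm{rad}(q)$) and $(b, q) = 1$, so that $\mathrm{rad}(nq) = \mathrm{rad}(q)\,\mathrm{rad}(b)$. Pulling out the factor $\mathrm{rad}(q)^{-1/2}$, this turns the left-hand side into
\begin{equation*}
    \mathrm{rad}(q)^{-1/2} \sum_{\substack{ab \leq X \\ a \mid q^\infty,\ (b,q)=1}} \mathrm{rad}(b)^{-1/2}
    \leq \mathrm{rad}(q)^{-1/2} \Big( \sum_{\substack{a \leq X \\ a \mid q^\infty}} 1 \Big) \Big( \sum_{b \leq X} \mathrm{rad}(b)^{-1/2} \Big).
\end{equation*}
The first inner sum is $\ll_\varepsilon X^\varepsilon$ (indeed $\ll d(q^{\lceil \log X / \log 2 \rceil})$ type bound, or more simply the number of $q^\infty$-smooth numbers up to $X$ is $(qX)^\varepsilon$), which will produce the $(qX)^\varepsilon$ factor in the claimed bound. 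So everything reduces to the $q$-free estimate $\sum_{b \leq X} \mathrm{rad}(b)^{-1/2} \ll X^{1/2+\varepsilon}$.

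For the remaining sum, the cleanest route is to compare the Dirichlet series $D(s) = \sum_{b \geq 1} \mathrm{rad}(b)^{-1/2} b^{-s}$ with $\zeta(s)$. By multiplicativity, $D(s) = \prod_p \big(1 + p^{-1/2}(p^{-s} + p^{-2s} + \cdots)\big) = \prod_p \big(1 + \frac{p^{-1/2 - s}}{1 - p^{-s}}\big)$. One checks that $D(s)\zeta(s)^{-1} = \prod_p (1 - p^{-s})\big(1 + \frac{p^{-1/2-s}}{1-p^{-s}}\big) = \prod_p \big(1 - p^{-s} + p^{-1/2-s}\big)$ converges absolutely for $\mathrm{Re}(s) > 1/2$, so $D(s)$ is holomorphic in $\mathrm{Re}(s) > 1$ with a simple pole at $s = 1$ (inherited from $\zeta$) and continues past $\mathrm{Re}(s) = 1/2 + \varepsilon$. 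Alternatively, and avoiding contour integration entirely, I would use Rankin's trick directly: for any $\sigma > 0$,
\begin{equation*}
    \sum_{b \leq X} \mathrm{rad}(b)^{-1/2} \leq X^{\sigma} \sum_{b \geq 1} \frac{\mathrm{rad}(b)^{-1/2}}{b^{\sigma}} = X^\sigma \prod_p \Big( 1 + \frac{p^{-1/2 - \sigma}}{1 - p^{-\sigma}} \Big),
\end{equation*}
and take $\sigma = 1/2 + \varepsilon$; the Euler product then converges (since the local factor is $1 + O(p^{-1-\varepsilon})$), giving a bound $\ll_\varepsilon X^{1/2+\varepsilon}$.

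Combining the two pieces yields $\sum_{n \leq X} \mathrm{rad}(nq)^{-1/2} \ll_\varepsilon \mathrm{rad}(q)^{-1/2} X^{1/2} (qX)^\varepsilon$, as claimed. I do not anticipate a genuine obstacle here: the only mild care needed is in making the coprimality decomposition $n = ab$ precise and in bounding the count of $q^\infty$-smooth numbers below $X$ by $(qX)^\varepsilon$ — this follows because such a number is determined by the exponent vector of its prime factors (all from among the $\ll \log q$ primes dividing $q$), each exponent being $\ll \log X$, so the count is $\ll (\log X)^{\omega(q)} \ll (qX)^\varepsilon$. Everything else is the standard Rankin-trick manipulation of a convergent Euler product.
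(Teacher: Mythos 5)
Your proposal is correct and uses the same key engine as the paper (Rankin's trick on an Euler product), but organizes it differently: you first split $n = ab$ with $a \mid q^\infty$ and $(b,q)=1$, pull out $\mathrm{rad}(q)^{-1/2}$, and then treat the $q$-smooth part and the $q$-coprime part as two separate sums. The paper is more compact: it applies Rankin's trick directly to $\sum_{n} n^{-1/2-\varepsilon} \mathrm{rad}(nq)^{-1/2}$, factors as an Euler product over all primes, and reads off $\mathrm{rad}(q)^{-1/2}$ together with a $q^\varepsilon$-size tail from the $p \mid q$ local factors, while the $p \nmid q$ factors give a convergent product. Your two-step version buys a cleaner conceptual separation (the $q$-coprime sum is just the $q=1$ case of the lemma), at the cost of having to separately verify the smooth-number count.

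One small caution: the chain $\#\{a \leq X : a \mid q^\infty\} \ll (\log X)^{\omega(q)} \ll (qX)^\varepsilon$ is not valid as stated. The first inequality is fine, but the second can fail; for instance with $q$ the product of the first $m$ primes and $\log X \asymp \sqrt{m}$, one has $(\log X)^{\omega(q)} = \exp\!\bigl(\tfrac{m}{2}\log m\bigr)$ while $(qX)^{\varepsilon} = \exp\!\bigl(O(\varepsilon m\log m)\bigr)$, so the inequality requires $\varepsilon$ bounded away from $0$. The smooth-number count itself is of course $\ll_\varepsilon (qX)^{\varepsilon}$ (as you also assert), but the cleanest justification is another Rankin step: $\sum_{a \leq X,\, a\mid q^\infty} 1 \leq X^{\varepsilon}\prod_{p\mid q}(1-p^{-\varepsilon})^{-1} \ll_\varepsilon X^{\varepsilon} C_\varepsilon^{\omega(q)} \ll_\varepsilon (qX)^{\varepsilon}$, using $\omega(q) \ll \log q/\log\log q$. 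With that substitution the proof is complete; everything else checks out.
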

\begin{proof}
Using Rankin's trick, the sum on the left hand side above is
\begin{equation*}
    \leq  \sum_{n=1}^{\infty} \frac{X^{1/2+\varepsilon}}{n^{1/2+\varepsilon}  \mathrm{rad}(nq)^{1/2}}
    = X^{1/2+\varepsilon} \prod_{p \nmid q} (1+p^{-1-\varepsilon} + \dots)
    \prod_{p|q} p^{-1/2} (1+p^{-1/2} + \dots).
\end{equation*}
The resulting product is easily seen to satisfy the claimed bound. 
\end{proof}

\section{Initial setup}
\subsection{Approximate functional equation and dissection}
By an approximate functional equation, dyadic partition of unity, and use of Cauchy's inequality, we have
\begin{equation*}
 \int_{T}^{2T} |L(f, 1/2+it)|^2 dt
 \ll 
 T^{\varepsilon} \max_{1 \ll N \ll T^{3/2+\varepsilon}}
 \int_{T}^{2T} \Big| \sum_{n \asymp N} \frac{\lambda_f(n)}{n^{1/2+it}} V(n) \Big|^2 dt. 
\end{equation*}
Here 
$\lambda_f(n):=\lambda_f(1,n)$ denotes the $(1,n)^{\text{th}}$ Fourier coefficient of $f$, and $V= V_{f,N}$ is a smooth function satisfying $V^{(j)}(x) \ll_j N^{-j}$ for $x \asymp N$.

Note that by the mean value theorem for Dirichlet polynomials, we have
\begin{equation}
\label{eq:MVTDPbound}
 \int_{T}^{2T} \Big| \sum_{n \asymp N} \frac{\lambda_f(n)}{n^{1/2+it}} V(n) \Big|^2 dt
 \ll (T+N) N^{\varepsilon},
\end{equation}
which is satisfactory for Theorem \ref{thm:mainthm} for $N \ll T^{4/3}$.  Hence we may assume
\begin{equation}
\label{eq:Nsize}
 T^{4/3} \ll N \ll T^{3/2+\varepsilon}.
\end{equation}
Let 
\begin{equation*}
 I(T) = I(T,N) = N^{-1} 
 \intR \omega \Big(\frac{t}{T} \Big) \Big|\sum_n \lambda_f(n) n^{-it} w_N(n) \Big|^2 dt,
\end{equation*}
where $\omega$ is a fixed nonnegative smooth bump function on $\mathbb{R}^+$ and $w_N$ is a smooth bump function supported on $[N, 2N]$, satisfying $w_N^{(j)}(x) \ll N^{-j}$.
The previous discussion shows
\begin{equation}
\label{eq:ITjsum}
 \int_T^{2T} |L(f,1/2+it)|^2 dt 
 \ll T^{4/3+\varepsilon}
 + \max_{T^{4/3} \ll N \ll T^{3/2+\varepsilon}} I(T,N),
\end{equation}
for appropriate choices of $\omega$ and $w_N$. To prove Theorem \ref{thm:mainthm}, it suffices to show
\begin{equation*}
    I(T) \ll T^{4/3 + \varepsilon},
\end{equation*}
for all $N$ satisfying \eqref{eq:Nsize}.

\subsection{Applying the delta symbol}\label{subsect:ApplyingDelta}

Squaring out and integrating, we have
\begin{equation*}
 I(T) =  \frac{T}{N} 
 \sum_{m,n} \lambda_f(m) \overline{\lambda_f}(n) \widehat{\omega}\Big(\frac{T}{2\pi} \log\frac{m}{n}\Big) w_N(m) w_N(n).
\end{equation*}
Next we write $m=n+k$, getting
\begin{equation*}
I(T) = \frac{T}{N} 
 \sum_{m,n,k} \lambda_f(m) \overline{\lambda_f}(n) \widehat{\omega}\Big(\frac{T}{2\pi} \log\frac{n+k}{n}\Big) w_N(m) w_N(n) \delta(n+k-m).
\end{equation*}
By a Taylor expansion, we have
\begin{equation*}
 \widehat{\omega}\Big(\frac{T}{2\pi} \log\frac{n+k}{n}\Big) 
 = \widehat{\omega}\Big(\frac{T k}{2\pi n}\Big) + 
 \frac{1}{T} 
 \Big(\frac{-T^2 k^2}{4\pi n^2}\Big) \widehat{\omega}'\Big(\frac{T k}{2\pi n}\Big) + \dots.
\end{equation*}
This shows
$I(T) = I_0(T) + O(N T^{-1+\varepsilon})$, where
\begin{equation*}
I_{0}(T) :=
\frac{T}{N} 
 \sum_{m,n,k} \lambda_f(m) \overline{\lambda_f}(n) \widehat{\omega}\Big(\frac{T k}{2\pi n}\Big) w_N(m) w_N(n) \delta(n+k-m).
\end{equation*}
Note that $ N/T \ll T^{1/2+\varepsilon}$, so this error term is more than sufficient for the purposes of Theorem \ref{thm:mainthm}.
Next we apply Lemma \ref{DeltaCor} for $\delta(m-n-k=0)$.
We will choose $C$ optimally at the end as $C = N^{1/2} T^{-1/6}$, but for now we only assume
\begin{equation}
\label{eq:RsizeDef}
N T^{-1+\varepsilon} \leq C \leq N^{1/2}T^{-\varepsilon}.
\end{equation}

In summary, we have shown the following.
\begin{myprop}
\label{prop:I00formula}
 We have
 \begin{equation}
 \label{eq:I00def}
  I_{0}(T) = 
  \sum_{i=1}^{2}
  \frac{T}{NC}
 \sum_{cd \leq C} \frac{1}{cd} F_i\left(\frac{cd}{C} \right)
 \thinspace \thinspace
 \sumstar_{h \shortmod{c}} \mathcal{S},
 \end{equation}
with
\begin{multline}
\label{eq:Sdef}
 \mathcal{S} = \mathcal{S}_i = 
 \sum_{m,n,k} \lambda_f(m) e_c(-hm) \overline{\lambda_f}(n) e_c(hn) \widehat{\omega}\Big(\frac{T k}{2 \pi n}\Big) 
 \\
 \times e_c(hk)
 w_N(m) w_N(n)
  U_i \left(
 \frac{n+k-m}{cdC}\right).
 \end{multline}
\end{myprop}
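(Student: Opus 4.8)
The plan is to substitute the delta method of Lemma~\ref{DeltaCor} into the formula for $I_0(T)$ obtained just above the proposition and rearrange; the content is purely organizational, the goal being to put the sum into the shape \eqref{eq:Sdef} that is amenable to Voronoi and Poisson summation.

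First I would apply Lemma~\ref{DeltaCor} to $\delta(n+k-m=0)$, with its free parameter taken to be (a fixed multiple of) $N$. This is legitimate: the factors $w_N(m)w_N(n)$ force $m,n\asymp N$, so $\delta(n+k-m=0)$ is supported on $|k|\ll N$ and hence its argument is $\ll N$; and $C>N^{\varepsilon}$ holds because \eqref{eq:RsizeDef} and \eqref{eq:Nsize} give $C\geq NT^{-1+\varepsilon}\geq T^{1/3+\varepsilon}$, which exceeds $N^{\varepsilon}$ since $N\ll T^{3/2+\varepsilon}$. This replaces $\delta(n+k-m=0)$ by
\begin{equation*}
\frac{1}{C}\sum_{c,d}\frac{1}{cd}\,S(0,n+k-m;c)\,F\!\left(\frac{cd}{C},\frac{n+k-m}{cdC}\right).
\end{equation*}
Next I would write the Ramanujan sum as $S(0,n+k-m;c)=\sumstar_{h\shortmod{c}}e_c\big(h(n+k-m)\big)$, factor the exponential as $e_c(-hm)\,e_c(hn)\,e_c(hk)$, and use the splitting $F(x,y)=F_1(x)U_1(y)+F_2(x)U_2(y)$ recorded just after \eqref{eq:Fdef} to decouple the $cd$-dependence from the $(n+k-m)$-dependence. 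Plugging all of this into $I_0(T)$ and interchanging the order of summation --- harmless because, for fixed $c,d$, the factors $w_N(m)$, $w_N(n)$, $U_i((n+k-m)/(cdC))$ confine $m,n,k$ to a finite range, while $F_i(cd/C)$ confines $c,d$ to $cd\ll C$ --- one collects the inner triple sum over $m,n,k$, which is precisely $\mathcal{S}_i$ as written in \eqref{eq:Sdef}. This gives \eqref{eq:I00def}; here the notation $cd\leq C$ is to be read as $cd\ll C$, the implied constant coming from the support of $F_i$.

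There is no genuine obstacle in this step. If anything plays the role of ``the hard part,'' it is only the bookkeeping: verifying the hypotheses of Lemma~\ref{DeltaCor} as indicated above, and noticing that it is exactly the decomposition $F=F_1U_1+F_2U_2$ that separates the arithmetic variable $n+k-m$ from the length variable $cd$. This last separation is what will make the subsequent applications of the $GL_3$ Voronoi formula and of Poisson summation possible.
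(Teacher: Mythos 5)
Your proof is correct and matches the paper's approach, which is simply to substitute the delta expansion of Lemma~\ref{DeltaCor} into the formula for $I_0(T)$, open the Ramanujan sum into additive characters, split $F=F_1U_1+F_2U_2$, and reorganize; the paper treats this as a summary step without writing out the details you have supplied. Your observations about the trivial sign flip ($\delta(n+k-m=0)$ versus $\delta(m-n-k=0)$, harmless since $F$ and the Ramanujan sum are even), the verification that $C>N^\varepsilon$, and the reading of $cd\leq C$ as $cd\ll C$ from the support of $F_i$ are all accurate.
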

In what follows we replace $U_i$ by $U$, since all the bounds will be independent of $i$.

\section{Summation formulas}\label{sect:DualSummations}
\subsection{Result statement}
To $\mathcal{S}$
we will apply Poisson summation in $k$ and $GL_3$ Voronoi summation in each of $m$, $n$.
The end result of this step is summarized in the following result, whose proof takes up the rest of this section.
To state this result, we introduce some notation.  
We let $T' = \frac{T}{2 \pi}$, and
for integer $k' \geq 1$ we let
\begin{equation}
\label{eq:nudef}
 \nu = \frac{c T'}{k'}.
\end{equation}

\begin{myprop}
\label{prop:SafterVoronoi}
We have
\begin{equation}
\label{eq:I00TDeltaBound}
I_{0}(T)
\ll T^{\varepsilon}
\max_{\substack{Q \ll C \\ \eta \in \{ \pm 1 \} }}
\sum_{d \ll C/Q}
\max_{\substack{N' \ll \frac{N^2}{d^3 C^3} T^{\varepsilon}}}
N\left(\frac{N'^2}{NQ^3}\right)^{2/3}  
\int_{t \asymp 1}   \sum_{c \asymp Q} 
 \sum_{k' \asymp \frac{QT}{N} } 
|\mathcal{A} |^2
 + O(T^{-100}),
\end{equation}
where 
\begin{equation}
\label{eq:A1def}
\mathcal{A} = 
\mathop{\sum_{n_1 | c} \sum_{n_2}}_{n_1^2 n_2 \asymp N'} 
\frac{\lambda_f(n_1, n_2)}{n_1 n_2}
S(\eta   \overline{k'}, n_2 ;c/n_1) 
e_c\Big(3   (n_1^2 n_2 \nu t)^{1/3}  \Big)
w(\cdot)
,
\end{equation}
for $w$ some $T^\varepsilon$-inert function.
\end{myprop}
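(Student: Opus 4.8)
The plan is to process the three sums in $\mathcal{S}$ (as given in \eqref{eq:Sdef}) one variable at a time, applying Poisson summation to the $k$-sum and $GL_3$ Voronoi (Lemma \ref{lem:gl3voronoi}) to each of the $m$- and $n$-sums, and then to organize the resulting archimedean integrals via stationary phase. First I would dispose of the $k$-variable: since $k \asymp N/T$ runs over a short range weighted by $\widehat{\omega}(Tk/(2\pi n))e_c(hk)$, Poisson summation modulo $c$ introduces a dual variable $k'$ of length $cT/N \asymp QT/N$ (after one sets $c \asymp Q$), together with a complete exponential sum in $k$ mod $c$ that collapses the additive character, contributing the term $e_c(\cdot)$ structure and an archimedean integral. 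The phase $\widehat\omega(Tk/(2\pi n))$ is of size $T/n \cdot$ (range), so after Poisson the dual variable is pinned near $k' \asymp cT'/n \asymp QT/N$, consistent with the definition $\nu = cT'/k'$ in \eqref{eq:nudef}.

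Next I would apply $GL_3$ Voronoi summation to the $m$-sum $\sum_m \lambda_f(m) e_c(-hm)(\dots)$ via Lemma \ref{lem:gl3voronoi}: with modulus $c$ and the additive frequency $-h$, this produces a sum over $n_1 \mid c$, a dual variable $m_2$ with $n_1^2 m_2 \asymp N^2/c^3 \ll N^2/(d^3C^3) \cdot$(stuff)$=:N'$, Kloosterman sums $S(\pm\overline{h}, m_2; c/n_1)$, and the transform $\Psi_\eta$, for which Lemma \ref{lem:VoronoiAsymptoticExpansion} gives an integral representation with oscillatory factor $e(\pm 3 m^{1/3} y^{1/3})$; this is the source of the phase $e_c(3(n_1^2 n_2 \nu t)^{1/3})$ in \eqref{eq:A1def} once the $c$ is pulled out and the $\nu$-scaling is inserted. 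Applying Voronoi symmetrically to the $n$-sum produces a conjugate copy, which is why \eqref{eq:I00TDeltaBound} features $|\mathcal{A}|^2$. The diagonal sum $\sideset{}{^*}\sum_{h \bmod c}$ over the remaining Kloosterman-type phases (coming from the two Voronoi applications plus the $e_c(hk)$ left from Poisson) then evaluates, via opening the Kloosterman sums and executing the character sum over $h$, to a genuine Kloosterman sum $S(\eta\overline{k'}, n_2; c/n_1)$ — matching \eqref{eq:A1def}. The various inert weight functions $w_N$, $F_i(cd/C)$, and the delta-method weight $U(\cdot)$ all combine into a single $T^\varepsilon$-inert function $w(\cdot)$ after the stationary-phase analysis of Proposition \ref{prop:statphase}, since each application of Voronoi/Poisson/stationary phase preserves inertness up to a negligible tail $O(T^{-100})$ (using $R = T^\varepsilon$ there).

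The bookkeeping of sizes is the routine-but-essential part: one needs to verify that after Voronoi the dual length is $N' \ll N^2/(d^3 C^3) \cdot T^\varepsilon$, that the prefactor collapses to $N(N'^2/(NQ^3))^{2/3}$ (which is just $c \cdot x^{2/3}/X^{1/3}$ from Lemma \ref{lem:VoronoiAsymptoticExpansion} with $x \asymp N'/c^3$, $X \asymp N$, times the $1/(n_1 n_2)$ normalization reorganized and the outer $T/(NC)$, $1/(cd)$ factors tracked), and that the $t$-integral of size $\asymp 1$ absorbs the archimedean variable surviving stationary phase. The main obstacle I anticipate is \emph{not} any single estimate but the careful organization of the triple summation formula so that the cross-terms between the three additive characters $e_c(-hm)$, $e_c(hn)$, $e_c(hk)$ — after two Voronoi transforms turn the $m,n$ characters into Kloosterman sums — reduce cleanly, under $\sideset{}{^*}\sum_{h\bmod c}$, to the single Kloosterman sum $S(\eta\overline{k'}, n_2; c/n_1)$; this requires tracking the $n_1 \mid c$ divisor structure on both the $m$- and $n$-sides and checking that the two divisors can be taken equal (or that unequal divisors contribute negligibly or are absorbed), which is where reciprocity and the multiplicativity of Kloosterman sums must be deployed with care. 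A secondary technical point is justifying the passage to genuinely compactly-supported, inert weights at each stage (truncating tails at $O(T^{-100})$), for which Lemma \ref{lemma:integrationbyparts} is the tool.
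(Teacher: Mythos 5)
Your high-level sequence of moves---Poisson in $k$, then $GL_3$ Voronoi in each of $m$ and $n$, then stationary-phase organization of the archimedean integrals with careful tracking of inert weights and truncations---does match the paper's proof (Lemmas \ref{lem:gl3voronoi}, \ref{lem:VoronoiAsymptoticExpansion}, and the integral analysis in Lemma \ref{lemma:IntegralAnalysis}). But two of the mechanisms you describe at the arithmetic level are wrong, and they are not cosmetic.

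First, the disposal of the $h$-sum. You say the final Kloosterman sum $S(\eta \overline{k'}, n_2; c/n_1)$ emerges by ``opening the Kloosterman sums and executing the character sum over $h$.'' That is not what happens and would be a significant detour. Poisson summation in $k$ produces a dual sum over $k' \equiv h \pmod{c}$. Since $h$ runs over reduced residues mod $c$, each $k'$ with $(k',c)=1$ determines a \emph{unique} $h \equiv k' \pmod{c}$, and hence $\overline{h} \equiv \overline{k'} \pmod{c/n_1}$ inside the Kloosterman sums coming from Voronoi. The $\sumstar_{h}$ therefore simply collapses, replacing $\overline{h}$ by $\overline{k'}$ with a coprimality condition $(c,k')=1$ appended; no opening of Kloosterman sums, no character-sum cancellation, and no reciprocity is needed at this stage. (Reciprocity enters much later, in the norm bound of Section \ref{sect:boundnorm}.) If you tried to execute your plan literally---summing $h$ across the two Kloosterman sums plus the Poisson phase---you would be fighting a battle that the congruence already wins for free.

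Second, the origin of $|\mathcal{A}|^2$. You attribute it to the $n$-sum being a ``conjugate copy'' of the $m$-sum, and you anticipate having to argue that the two Voronoi divisors ($n_1 \mid c$ on one side, $m_1 \mid c$ on the other) can be taken equal or that unequal divisors are negligible. Neither is what the paper does, and the ``take the divisors equal'' step would in fact be a genuine gap if you had to justify it: the two Voronoi outputs carry independent dyadic scales $M'$ and $N'$ and independent sign choices $\eta_1,\dots,\eta_4$, so they are not literal conjugates. The paper sidesteps all of this with a one-line AM-GM inequality, $|\sum_{n_1,n_2}| \cdot |\sum_{m_1,m_2}| \ll |\sum_{n_1,n_2}|^2 + |\sum_{m_1,m_2}|^2$, after which symmetry lets you keep only one of the two squares, giving $|\mathcal{A}|^2$. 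This is simpler, and also more robust, than trying to identify the two sides. You should also note that the constraint $N' \ll N^2/(d^3 C^3) T^\varepsilon$ is not produced by Voronoi per se but by integration by parts in the $y$ (and $x$) variable of the $\mathcal{I}$-integral (see the derivation around \eqref{eq:n1n2m1m2bound} and \eqref{eq:M'boundimplication}); your attribution of this truncation to Voronoi alone understates a real piece of the argument.
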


\subsection{Poisson}
For fixing the sizes of some parameters, it is helpful to apply Poisson summation first in isolation.
\begin{mylemma}
Poisson summation applied to the inner sum over $k$ gives
 \begin{equation}
 \label{eq:PoissonSolo}
\mathcal{S} = \sum_{k' \equiv h \shortmod{c}} 
 \sum_{m,n} \lambda_f(m) e_c(-hm) \overline{\lambda_f}(n) e_c(hn) 
  w_N(m) w_N(n)
   \widehat{H}\Big(-\frac{k'}{c}\Big),
 \end{equation}
 where
 $ H(z) = \widehat{\omega}(T' z/n) U(\frac{n-m+z}{cdC})$.
In \eqref{eq:PoissonSolo}, the sum may be restricted to
$k'>0$ and 
\begin{equation}
\label{eq:k'crestriction}
    \frac{k'}{c} \asymp \frac{T}{N},
\end{equation}
with a small error term of size $\ll T^{-2024}$.
\end{mylemma}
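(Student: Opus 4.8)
The plan is to apply the Poisson summation formula to the sum over $k$ with $k \equiv h \pmod c$ (which is forced by the presence of $e_c(hk)$), and then determine the effective length of the dual variable $k'$ by an integration-by-parts argument. First I would isolate the $k$-dependence in $\mathcal{S}$: the factors involving $k$ are $\widehat{\omega}(Tk/(2\pi n))$, $e_c(hk)$, and $U((n+k-m)/(cdC))$, so writing $k = h + c\ell$ and summing over $\ell \in \mz$, the character $e_c(hk)$ becomes the constant $e_c(h^2)$. Actually, it is cleaner to keep the congruence condition and apply Poisson directly: for a function $H(z)$ and the sum $\sum_{k \equiv h (c)} H(k)$, Poisson gives $\frac{1}{c}\sum_{k' \in \mz} e_c(h k') \widehat{H}(k'/c)$. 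Here $H(z) = \widehat{\omega}(T'z/n)\, U\big((n-m+z)/(cdC)\big)$ with $T' = T/(2\pi)$, and $e_c(h k')$ combines with the existing $e_c(hk)$-free terms; tracking the sign conventions gives the $\widehat{H}(-k'/c)$ and the congruence $k' \equiv h \pmod c$ as written. This is the content of \eqref{eq:PoissonSolo}.

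The substantive step is establishing the restriction \eqref{eq:k'crestriction}, namely $k'/c \asymp T/N$, with negligible error otherwise. For this I would examine $\widehat{H}(-k'/c) = \intR H(z)\, e(k' z/c)\, dz$. The function $H(z)$ is supported where $\widehat\omega(T'z/n)$ is essentially nonzero — since $\widehat\omega$ is Schwartz and $n \asymp N$, this means $z \ll N^{1+\varepsilon}/T$ up to negligible tails — and within this range the $U$-factor (supported on arguments $\ll 1$, i.e. $|n-m+z| \ll cdC$) is harmless since $cdC \le C^2 \le N$ whereas we only need smoothness. The phase of the integrand is $\phi(z) = 2\pi k' z/c$ plus the (mild) phase of $\widehat\omega(T'z/n)$. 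Differentiating: each derivative of the $\widehat\omega$-factor in $z$ costs $T/N$ (from the chain rule $T'/n \asymp T/N$), so the total phase derivative is of size $\asymp k'/c$ unless $k'/c \asymp T/N$, in which case there may be stationary-phase behavior or at least no cancellation. When $k'/c \gg T^\varepsilon \cdot T/N$ the linear term dominates and repeated integration by parts (Lemma \ref{lemma:integrationbyparts}, with $Z \asymp N^{1+\varepsilon}/T$, $Y \asymp k' N/(cT) \gg T^\varepsilon$) makes $\widehat H(-k'/c)$ decay faster than any power of $T$; similarly when $k'/c \ll T^{-\varepsilon} \cdot T/N$. Summing the resulting negligible contributions over the (polynomially many) remaining values of $k', c, m, n$ and over the outer sums in Proposition \ref{prop:I00formula}, using the trivial bound $|\lambda_f(n)| \ll n^\varepsilon$ on average via \eqref{eq:convexityGL3}, produces the claimed total error $\ll T^{-2024}$.

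The main obstacle is bookkeeping rather than conceptual: one must verify that the Schwartz decay of $\widehat\omega$ genuinely confines $z$ to the range $z \ll N^{1+\varepsilon}/T$ so that the relevant derivative bounds for Lemma \ref{lemma:integrationbyparts} hold with the correct parameters $Z, Y, R$, and one must check that the $U$-factor does not interfere with the smoothness hypotheses (it does not, since $cdC \gg N T^{-1+\varepsilon}$ by \eqref{eq:RsizeDef}, so $U$ varies on a scale no shorter than that of $\widehat\omega$). The positivity of $k'$ follows because $k = m - n \asymp N/T > 0$ on the support (recall $\widehat\omega(Tk/(2\pi n))$ forces $k \asymp N/T$), so the dual variable inherits the correct sign after matching frequencies. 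Once these sizes are pinned down, the lemma follows.
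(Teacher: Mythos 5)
Your Poisson summation step and the upper bound $k'/c \ll T^{1+\varepsilon}/N$ via integration by parts are fine, but the argument for the lower bound $k'/c \gg T^{1-\varepsilon}/N$ and for the positivity $k'>0$ has a genuine gap.

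The issue is that $\widehat\omega$ is the Fourier transform of a compactly supported bump and is therefore a Schwartz function, \emph{not} a product of a linear phase and an inert weight. Your claim that the total phase derivative in $z$ is ``$\asymp k'/c$ plus a contribution $\asymp T/N$ from $\widehat\omega$'' cannot be extracted rigorously: there is no well-defined oscillatory phase to peel off from $\widehat\omega(T'z/n)$, and Lemma~\ref{lemma:integrationbyparts} requires a genuine phase with a nonvanishing derivative. When $k'/c$ is small (say $k'=0$), plain integration by parts gives you nothing; indeed $\widehat{H}(0)=\int H(z)\,dz$ is small only because $\omega$ (and all its derivatives) vanish at $0$, which is a fact about the support of $\omega$, not about oscillation. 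Likewise, your assertion that ``$k \asymp N/T > 0$ on the support'' before Poisson is incorrect: the weight $\widehat\omega(T'k/n)$ is Schwartz, so the original $k$ ranges over all integers, positive and negative; positivity of $k'$ is a nontrivial output of the analysis, not an input.

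The paper's proof closes exactly this gap with a different mechanism: it Taylor-expands the $U$-factor about $z = 0$ (valid because the effective range $z \ll N^{1+\varepsilon}/T$ is $o(cdC)$ by \eqref{eq:RsizeDef}), and then recognizes the remaining integral $\int \widehat\omega(x)\,x^j\,e(nxy/T')\,dx$ as a multiple of $\omega^{(j)}(ny/T')$ via Fourier inversion. Since $\omega$ and all its derivatives are compactly supported on a fixed interval inside $\mathbb{R}^+$, this simultaneously forces $ny/T' \asymp 1$ (so $k'/c \asymp T/N$ from both sides) and, from the sign ($y = -k'/c$ must lie where $\omega$ lives), forces $k'>0$. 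Without some use of the compact support of $\omega$ itself (as opposed to the rapid decay of $\widehat\omega$), the lower bound and the sign restriction on $k'$ do not follow; an alternative rigorous route would be to interchange the $z$- and $t$-integrals, writing $\widehat\omega(T'z/n) = \int \omega(t)\,e(T'zt/n)\,dt$, and then observe that the inner $z$-integral localizes $T't/n + y$ near $0$ with $t$ in the positive support of $\omega$.
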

 Note that \eqref{eq:k'crestriction} implies that
 $c \asymp \frac{k' N}{T}$, so using \eqref{eq:Nsize} we may assume
\begin{equation}
\label{eq:clowerbound}
c \gg \frac{N}{T} \gg T^{1/3}.
\end{equation}

\begin{proof}
Isolating the sum over $k$ in \eqref{eq:Sdef}, and applying Poisson summation, we have
\begin{equation*}
 \sum_k \widehat{\omega}\Big(\frac{T' k}{n}\Big) e_c(hk) U\left(\frac{n+k-m}{cdC}\right)
 = \sum_{k' \equiv h \shortmod{c}} \widehat{H}\Big(-\frac{k'}{c}\Big).
\end{equation*}
Now note that by Taylor expansion, 
\begin{align}
\label{Taylor}
    \widehat{H}(-y)=&\intR \widehat{\omega}(T'z/n)U\left(\frac{n-m+z}{cdC}\right)e(yz)dz\nonumber\\
    =&\frac{n}{T'}\sum_{j=0}^J \frac{n^j}{j!(cdCT')^j}U^{(j)}\left(\frac{n-m}{cdC}\right)\intR \widehat{\omega}(x)x^je\left(\frac{nxy}{T'}\right)dx+O\left(\frac{N}{T}\left(\frac{N}{CT}\right)^{J+1}\right)\\
    =&\frac{n}{T'}\sum_{j=0}^J \frac{n^j}{j!(icdCT)^j}U^{(j)}\left(\frac{n-m}{cdC}\right)\omega^{(j)}\left(\frac{ny}{T'}\right)+O\left(\frac{N}{T}\left(\frac{N}{CT}\right)^{J+1}\right)\nonumber,
\end{align}
for any $J\geq0$. This gives an asymptotic expansion by the assumption \eqref{eq:RsizeDef}, so the sum may be truncated with a small error term. Hence \eqref{eq:k'crestriction} follows from the support of $\omega$.
\end{proof}

\subsection{Voronoi}
Next we examine the effect of the $GL_3$ Voronoi formula applied to both the $m$ and $n$ sums in $\mathcal{S}$. Using Lemmas \ref{lem:gl3voronoi}
 and \ref{lem:VoronoiAsymptoticExpansion}, we obtain
\begin{multline}
\label{eq:SandIfirstappearance}
 \mathcal{S} = c^2
  \sum_{\eta_1,\eta_2, \eta_3, \eta_4 \in \{ \pm 1 \}}\sum_{k' \equiv h \shortmod{c}} 
 \sum_{\substack{n_1 | c \\ m_1 | c}}
 \sum_{n_2, m_2} 
 \frac{\lambda_f(m_2, m_1) \lambda_f(n_1, n_2)}{m_1 m_2 n_1 n_2}
\frac{\Big(\frac{n_1^2 n_2}{c^3} \Big)^{2/3} \Big(\frac{m_1^2 m_2}{c^3} \Big)^{2/3}}{N^{2/3}}
 \\
\times S(\eta_3 \overline{h}, m_2;c/m_1) S(\eta_4 \overline{h}, n_2 ;c/n_1) 
\cdot
\mathcal{I} + O\left(T^{-2024}\right),
 \end{multline}
for any $A>0$,
where
\begin{multline} 
\label{eq:mathcalI}
 \mathcal{I} 
 = 
 \intR \intR \intR 
w_N(x) w_N(y)\widehat{\omega}\Big(\frac{T'z}{x}\Big)
w(x,y, \cdot)
\\
\times U\left(\frac{x-y+z}{cdC}\right) e\left(\eta_1 \frac{3 (y m_1^2 m_2)^{1/3}}{c}+\eta_2\frac{3 (x n_1^2 n_2)^{1/3}}{c}+\frac{k' z}{c}\right) 
  dx dy dz.
\end{multline}
Note that in \eqref{eq:mathcalI} we can restrict the integral to $\frac{z}{x} \ll T^{-1+\varepsilon}$.
It is convenient here to apply a dyadic partition of unity to $c$, $m_1^2 m_2$, and $n_1^2 n_2$.  Say that a piece of the partition implies the conditions
\begin{equation*}
 c \asymp Q,
 \qquad
 m_1^2 m_2 \asymp M', 
 \qquad
 n_1^2 n_2 \asymp N'.
\end{equation*}
Then we can simplify our expression for $\mathcal{S}$ to take the form
\begin{multline}
\label{eq:SandIsecondappearance}
 \mathcal{S} = 
 \sum_{\eta_1,\eta_2, \eta_3, \eta_4 \in \{ \pm 1 \}}
 \sum_{\substack{N', M', Q \\ \text{dyadic}}}
 \Big(\frac{N' M'}{N Q^3} \Big)^{2/3} 
  \sum_{k' \equiv h \shortmod{c}} 
 \sum_{\substack{n_1 | c \\ m_1 | c}}
 \sum_{\substack{n_1^2 n_2 \asymp N' \\ m_1^2 m_2 \asymp M'}} 
 \frac{\lambda_f(m_2, m_1) \lambda_f(n_1, n_2)}{m_1 m_2 n_1 n_2}
 \\
\times S(\eta_3 \overline{h}, m_2;c/m_1) S(\eta_4 \overline{h}, n_2 ;c/n_1) 
\cdot
\mathcal{I} + O\left(T^{-2024}\right),
 \end{multline}
and where now $\mathcal{I}$ is slightly modified by absorbing the dyadic partitions into the inert weight function.

We next focus on the asymptotic behavior of $\mathcal{I}$. 
\begin{mylemma}\label{lemma:IntegralAnalysis}
Let $D = \eta_1 (m_1^2 m_2)^{1/3} + \eta_2 (n_1^2 n_2)^{1/3}$. There
exists a $T^{\varepsilon}$-inert function $w(\cdot)$ of all relevant parameters such that
\begin{equation}
 \label{eq:IformulaDecent}
\mathcal{I} 
 = 
 cdC
\frac{N^2}{T} 
\int_{t \asymp 1}
e_c\Big(3 D (\nu t)^{1/3}\Big)
 w(\cdot)
 dt
 +O(T^{-2024}).
\end{equation}
\end{mylemma}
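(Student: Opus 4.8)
The plan is to evaluate the triple integral $\mathcal{I}$ in \eqref{eq:mathcalI} directly, performing the $y$-, $z$-, and $x$-integrations in turn so as to reduce it to the one-dimensional integral over $t\asymp1$. Throughout I use the ranges $x,y\asymp N$; the restriction $z/x\ll T^{-1+\varepsilon}$ (so $z\ll (N/T)T^{\varepsilon}$) recorded after \eqref{eq:mathcalI}; the constraints $cd\leq C\leq N^{1/2}T^{-\varepsilon}$ and $c\gg N/T$ from \eqref{eq:clowerbound}; and the truncations $m_1^2m_2\asymp M'$, $n_1^2n_2\asymp N'$ with $M',N'\ll N^2(d^3C^3)^{-1}T^{\varepsilon}$, so that $(M')^{1/3},(N')^{1/3}\ll N^{2/3}(dC)^{-1}$.

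First I would collapse the delta-method weight $U$ via the substitution $v=(x-y+z)/(cdC)$ (keeping $x,z$ fixed), giving $dx\,dy\,dz=cdC\,dx\,dv\,dz$, $y=x+z-cdCv$, and $v\asymp1$. Inserting this into the phase and Taylor expanding $(x+z-cdCv)^{1/3}=x^{1/3}\big(1+(z-cdCv)/x\big)^{1/3}$, the leading term combines $\eta_1(m_1^2m_2)^{1/3}$ and $\eta_2(n_1^2n_2)^{1/3}$ into $3Dx^{1/3}/c$. Using the size bounds above, one checks that the part of the first-order term involving $z$, and all terms of order $\geq2$ in $z-cdCv$, contribute at most $O(T^{-\delta})$ to the phase for some fixed $\delta>0$; expanding the exponential and $w_N(x+z-cdCv)$ to a large fixed order then renders these, together with the remainder, negligible up to $O(T^{-2024})$ and absorbs them into a $T^{\varepsilon}$-inert weight. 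The only surviving correction is the first-order term linear in $v$, which has size $\ll(M')^{1/3}dC/N^{2/3}\ll1$ and is likewise absorbed into a $T^{\varepsilon}$-inert function of $v$. This yields
\begin{equation*}
\mathcal{I}=cdC\iiint w_N(x)\,U(v)\,\widehat{\omega}\Big(\frac{T'z}{x}\Big)\,\widetilde{w}(v,x,z,\cdot)\,e\Big(\frac{3Dx^{1/3}}{c}+\frac{k'z}{c}\Big)\,dx\,dv\,dz+O(T^{-2024}),
\end{equation*}
with $\widetilde w$ a $T^{\varepsilon}$-inert function.

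Next I would carry out the (now oscillation-free) $v$-integral $\int U(v)\widetilde w(v,x,z,\cdot)\,dv$, producing a $T^{\varepsilon}$-inert function of the remaining variables. For the $z$-integral I substitute $z=xu/T'$, so $k'z/c=xu/\nu$ with $x/\nu\asymp1$ (since $k'/c\asymp T/N$), the amplitude becoming $\widehat\omega(u)$ times a bounded $T^{\varepsilon}$-inert function of $u$; as $\widehat\omega$ is Schwartz and $x/\nu$ lies in a fixed dyadic range, the $u$-integral equals $\tfrac{x}{T'}$ times a bounded $T^{\varepsilon}$-inert function of $x$ at scale $N$ (essentially $\omega(\pm x/\nu)$, supported on $x\asymp\nu\asymp N$). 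Thus $\mathcal{I}=cdC\,\tfrac{N}{T}\int_x W(x,\cdot)\,e(3Dx^{1/3}/c)\,dx+O(T^{-2024})$ for a $T^{\varepsilon}$-inert $W$ supported on $x\asymp N$. Finally, the substitution $x=\nu t$ gives $x^{1/3}=(\nu t)^{1/3}$, $t\asymp1$, and Jacobian $\nu\asymp N$; absorbing the factor $\nu/N$ (a $T^{\varepsilon}$-inert function of $c,k'$) together with $W(\nu t,\cdot)$ into a single $T^{\varepsilon}$-inert weight $w(\cdot)$ gives \eqref{eq:IformulaDecent}. One verifies along the way that every weight produced is genuinely $T^{\varepsilon}$-inert in all the parameters it depends on, using the standard closure of inert families under integration and multiplication (cf.\ \cite{KPY,BKY}).

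The main obstacle is the Taylor analysis of the cube-root phase $(x+z-cdCv)^{1/3}$ after $U$ is collapsed: one must verify, using precisely the ranges $cd\leq C\leq N^{1/2}T^{-\varepsilon}$, $c\gg N/T$, and $M',N'\ll N^2(d^3C^3)^{-1}T^{\varepsilon}$, that the $z$-linear and all higher-order contributions to the phase are negligible while the $v$-linear contribution, though genuinely present, is of bounded size and hence harmless. The remaining integrations are routine; note in particular that the surviving phase $3Dx^{1/3}/c$ has nonvanishing $x$-derivative, so no stationary point arises --- the oscillation of size $\asymp N/(cdC)$ it carries is simply retained inside the final integral over $t$ (the formula is an identity, to be exploited only later).
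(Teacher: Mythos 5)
Your route is genuinely different from the paper's. The paper's proof of Lemma~\ref{lemma:IntegralAnalysis} proceeds in this order: (i) establish the truncation $M',N'\ll T^{\varepsilon}N^2/(d^3C^3)$ by integration by parts in the $y$-integral (Lemma~\ref{lemma:integrationbyparts} with $Y=(NM'/Q^3)^{1/3}$, $X=T^{\varepsilon}(1+N/(QdC))$, $Z=N$); (ii) observe $z=o(cdC)$, Taylor expand $U$ in $z$, and evaluate the $z$-integral by Fourier inversion, producing the factor $\frac{x}{T'}U(\frac{x-y}{cdC})\omega(\frac{xk'}{cT'})$; (iii) set $y=x+u$, linearize the cube-root phase in $u$, and evaluate the $u$-integral as a Fourier transform of an inert function (size $cdC$); (iv) substitute $x=\nu t$. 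You instead make the substitution $v=(x-y+z)/(cdC)$ up front to collapse the delta-weight, then do all the Taylor work once on $(x+z-cdCv)^{1/3}$, then integrate $v$, then $z$ (via $z=xu/T'$), then $x=\nu t$. Your size checks on the $v$-linear term ($\ll(M')^{1/3}dC/N^{2/3}\ll T^{\varepsilon}$), the $z$-linear term (negligible, using $c\gg N/T$ and $cdC\leq C^2\leq NT^{-2\varepsilon}$), and the quadratic term ($\ll T^{\varepsilon}cdC/N\ll T^{-\varepsilon}$) are correct and match the paper's estimate $\frac{(xm_1^2m_2)^{1/3}}{c}\cdot\frac{u^2}{x^2}\ll T^{\varepsilon}C^2/N\ll 1$; the accounting of the factors $cdC\cdot\frac{N}{T}\cdot\nu\asymp cdC\frac{N^2}{T}$ also agrees.

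However, there is a genuine gap. You list the truncation $M',N'\ll N^2(d^3C^3)^{-1}T^{\varepsilon}$ among your standing hypotheses, alongside $cd\leq C$ and $c\gg N/T$, and all your size checks invoke it. But this truncation is not given — it is precisely the first thing the paper's proof establishes (equations \eqref{eq:n1n2m1m2bound}--\eqref{eq:M'boundimplication}), and it is the mechanism by which the constraint $N'\ll\frac{N^2T^{\varepsilon}}{d^3C^3}$ enters Proposition~\ref{prop:SafterVoronoi}. Without it your Taylor expansions do not converge: the $v$-linear term is no longer $\ll T^{\varepsilon}$, the quadratic error is no longer $o(1)$, and you cannot absorb the residual phase into an inert weight. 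So as written the argument only covers a sub-range of $(M',N')$, and the lemma statement (an identity for all $M',N'$, with a $T^{\varepsilon}$-inert $w(\cdot)$ that implicitly encodes the restriction) is not proved. The fix is available to you inside your own framework — after the $v$-substitution, integration by parts in $v$ against the linear phase of size $(M')^{1/3}dC/N^{2/3}$ shows $\mathcal{I}\ll T^{-A}$ unless $(M')^{1/3}\ll T^{\varepsilon}N^{2/3}/(dC)$, i.e.\ $M'\ll T^{\varepsilon}N^2/(d^3C^3)$, exactly the desired bound, and similarly for $N'$ — but you need to do it rather than assume it, since it is the load-bearing part of the lemma.
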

 \begin{proof}
As a first step,  we will constrain the sizes of $M'$ and $N'$.  We claim that $\mathcal{I}$ is very small unless
\begin{equation}
\label{eq:n1n2m1m2bound}
 M' + N' 
 \ll T^\varepsilon\frac{N^2}{d^3C^3}. 
\end{equation}
To see this, consider first the $y$-integral.  
We use Lemma \ref{lemma:integrationbyparts}
(integration by parts)
 with $Y = (\frac{N M'}{Q^3})^{1/3}$, $X = T^{\varepsilon}(1 + \frac{N}{QdC})$, $Z=N$.
 Note that using \eqref{eq:RsizeDef} we have $\frac{N}{QdC} \gg \frac{N}{C^2} \gg T^{\varepsilon}$.
Hence this integral is very small unless
\begin{equation}
\label{eq:M'boundimplication}
 \left(\frac{NM'}{Q^3}\right )^{\frac13} \ll  \frac{N}{QdC} T^{\varepsilon} 
 \Longleftrightarrow 
 M'  \ll T^{\varepsilon}  \frac{N^2}{d^3C^3}. 
\end{equation}

The case with the $x$ integral is essentially the same.  The only difference is the presence of $\widehat{\omega}(T'z/x)$, but this does not change the value of $X$, so the integration by parts argument leads to the same truncation point on $N'$ as for $M'$.

Note that since we may assume $z \ll N T^{-1+\varepsilon}$ then $z = o(cdC)$ by \eqref{eq:RsizeDef}, so 
in practice we have $U(\frac{x-y+z}{cdC}) = U(\frac{x-y}{cdC}) + \dots$ as in \eqref{Taylor}, where the lower-order terms may be absorbed into the inert weight function.  This means the $z$-integral may be easily evaluated to give
\begin{equation*}
 \intR \widehat{\omega}\Big(\frac{T'z}{x}\Big) 
 U\Big(\frac{x-y+z}{cdC}\Big) 
 e\Big(\frac{k' z}{c}\Big) dz
 = \frac{x}{T'} U\Big(\frac{x-y}{cdC}\Big) 
 \omega \Big(\frac{x k'}{c T'} \Big) + \dots.
\end{equation*}
We can write $\frac{x}{T'} = \frac{N}{T} \cdot \frac{2\pi x}{N}$ and absorb the factor $\frac{2\pi x}{N}$ into the inert weight function.
We also change variables $y=x+u$.
Summarizing the progress so far, we have
\begin{multline} 
\label{eq:mathcalImodified}
 \mathcal{I} 
 = 
 \frac{N}{T} \intR \intR 
w_N(x) w_N(x+u)
   \omega \Big(\frac{x k'}{c T'} \Big)
w(\cdot)
\\
\times U\left(\frac{-u}{cdC}\right) e\left(\eta_1 \frac{3 ((x+u) m_1^2 m_2)^{1/3})}{c}+\eta_2\frac{3 (x n_1^2 n_2)^{1/3}}{c} \right) 
  du 
  dx + \dots.
\end{multline}

Now
the basic idea is that the $u$-integral is contained in a short enough interval that the exponential terms can be linearly approximated, though this takes some work to develop in detail.  
Note that
\begin{equation*}
 3 \frac{((x+u) m_1^2 m_2)^{1/3}}{c}
 = 3 \frac{(x m_1^2 m_2)^{1/3}}{c}
 \left(1 + \frac{u}{x}\right)^{1/3}.
\end{equation*}
We then write $(1+ \frac{u}{x})^{1/3} = 1 + \frac13 \frac{u}{x} + O(N^{-2} u^2)$.  
Since $u \ll c d C$,
and so using \eqref{eq:n1n2m1m2bound} and \eqref{eq:clowerbound}, this error term is of size
\begin{equation*}
\frac{(x m_1^2 m_2)^{1/3}}{c} \cdot \frac{u^2}{x^2} \ll 
 \frac{N T^{\varepsilon}}{cdC} \frac{(cdC)^2}{N^2} 
 \ll \frac{T^{\varepsilon} C^2}{N}\ll 1.
\end{equation*}
This may be absorbed into the inert weight function.  Recall $D = \eta_1 (m_1^2 m_2)^{1/3} + \eta_2 (n_1^2 n_2)^{1/3}$.  
Substituting $y=x+u$ gives that 
\begin{align}
\label{eq:exponentialtermwithcuberoots}
e_c(3(\eta_1 (y m_1^2 m_2)^{1/3} + \eta_2 ( x n_1^2 n_2)^{1/3})) 
= e_c\left(3 D x^{1/3} +\eta_1 u \frac{(m_1^2 m_2)^{1/3}}{x^{2/3}}\right) w(\cdot),
\end{align}
where $w(\cdot)$ is $1$-inert.
Applying these approximations into $\mathcal{I}$, we obtain
\begin{multline*}
 \mathcal{I} 
 = \frac{N}{T}
 \intR \intR 
e_c(3 D x^{1/3})
w_N(x) w_N(x+u) 
   \omega \Big(\frac{x k'}{c T'} \Big)
   \\
   \times
e_c\Big(\eta_1 u \frac{(m_1^2 m_2)^{1/3}}{x^{2/3}}\Big)
U\Big(\frac{-u}{cdC}\Big) 
w(\cdot)
 du dx,
\end{multline*}
up to a small error term.  The $u$-integral returns the Fourier transform of an inert function, has size $cdC$, and essentially constrains $M'$ according to \eqref{eq:n1n2m1m2bound}.  Finally, for the $x$-integral we change variables
$x = \nu t$, where $\nu = \frac{cT'}{k'}$, giving
\begin{equation*}
 \mathcal{I} 
 = 
 cdC
\frac{N^2}{T} 
\intR  e_c(3D (\nu t)^{1/3}) w(\cdot) 
\omega(t)
dt
  + O(T^{-2024}). \qedhere
\end{equation*}
\end{proof}

Now we are ready to prove Proposition \ref{prop:SafterVoronoi}.  We pick up with \eqref{eq:SandIsecondappearance}, insert \eqref{eq:IformulaDecent}, and recall the definition of $D = \eta_1 (m_1^2 m_2)^{1/3} + \eta_2 (n_1^2 n_2)^{1/3}$ to separate these variables.  Hence
\begin{multline*}
 \mathcal{S} = 
   cdC
\frac{N^2}{T}  
 \sum_{\eta_1,\eta_2, \eta_3, \eta_4 \in \{ \pm 1 \}}
 \sum_{\substack{N', M', Q \\ \text{dyadic}}}
\Big(\frac{N' M'}{N Q^3} \Big)^{2/3} 
    \int_{t \asymp 1}
  \sum_{k' \equiv h \shortmod{c}} 
  \\
\Big[ \sum_{\substack{n_1^2 n_2 \asymp N' \\ n_1 | c}} 
 \frac{\lambda_f(n_1, n_2)}{ n_1 n_2} S(\eta_4 \overline{h}, n_2 ;c/n_1) 
 e_c\Big (3 \eta_2 (n_1^2 n_2 \nu t)^{1/3} \Big)  w(n_1^2 n_2, \cdot) \Big]
 \\
\times \Big[
 \sum_{\substack{m_1^2 m_2 \asymp M' \\ m_1|c}} 
 \frac{\lambda_f(m_2, m_1)}{m_1 m_2}
S(\eta_3 \overline{h}, m_2;c/m_1) 
e_c\Big (3 \eta_1  (m_1^2 m_2 \nu t)^{1/3}  \Big)
w(m_1^2 m_2, \cdot)
 \Big]
 dt.
 \end{multline*}
Now substituting this into \eqref{eq:I00def} gives
 \begin{multline*}
  I_{0}(T) = 
 \frac{\Delta N}{T} 
 \sum_{\substack{Q \ll C \\ Q \text{ dyadic}}}
 \sum_{d \ll C/Q}
\sum_{\substack{M', N' \ll \frac{N^2}{d^3 C^3} T^{\varepsilon} \\M', N' \text{dyadic}}}
  \Big(\frac{N' M'}{N Q^3} \Big)^{2/3} 
  \sum_{\eta_1,\eta_2, \eta_3, \eta_4 \in \{ \pm 1 \}}
  \int_{t \asymp 1}
\sum_{k' \asymp \frac{QT}{N} } 
\\
      \sum_{\substack{c \asymp Q \\ (c, k') = 1}} 
    \Big[ \sum_{\substack{n_1^2 n_2 \asymp N' \\ n_1 | c}} 
 \frac{\lambda_f(n_1, n_2)}{ n_1 n_2} S(\eta_4 \overline{k'}, n_2 ;c/n_1) 
 e_c\Big (3 \eta_2 (n_1^2 n_2 \nu t)^{1/3} \Big) 
 w(n_1^2 n_2, \cdot)
 \Big]
 \\
 \times \Big[
 \sum_{\substack{m_1^2 m_2 \asymp M' \\ m_1|c}} 
 \frac{\lambda_f(m_2, m_1)}{m_1 m_2}
S(\eta_3 \overline{k'}, m_2;c/m_1) 
e_c\Big (3 \eta_1(m_1^2 m_2 \nu t)^{1/3}  \Big)
w(m_1^2 m_2, \cdot)
 \Big]
 dt.
 \end{multline*}
Finally, we apply the
AM-GM inequality in the form $|\sum_{n_1, n_2} |\times |\sum_{m_1, m_2} | \ll |\sum_{n_1,n_2}|^2 + |\sum_{m_1, m_2}|^2$.  By symmetry, each of these two terms leads to the same final bound.
We may also reduce to the case $\eta_2 = 1$ by replacing $f$ by $\overline{f}$ if necessary.
This proves Proposition \ref{prop:SafterVoronoi}.

\subsection{Simplifications}
We continue from the statement of Proposition \ref{prop:SafterVoronoi}.
Our goal is to present a processed form of the bound which is simplified in some aspects, and is in a more ready-made form for the upcoming steps.
Define the expression $\mathcal{J}^{+} = \mathcal{J}^{+}(N')$ by
 \begin{equation}
 \label{eq:JplusDef}
  \mathcal{J}^{+}
  = 
  \sum_{n_1} \frac{1}{n_1^2}
 \sum_{k' \asymp \frac{QT}{N}} \sum_{\substack{b \asymp \frac{Q}{n_1} \\ (b, k') = 1}} \int_{v \asymp 1}
 \Big| 
 \sum_{n_2 \asymp \frac{N'}{n_1^2}} 
\frac{\lambda_f(n_1, n_2)}{n_2}
S(\eta  \overline{k'}, n_2 ;b) 
e\Big(\frac{3  (Nn_1^2 n_2)^{1/3}}{Q}  v \Big)
w(\cdot)
\Big|^2 dv.
 \end{equation}
We may as well define at this time the multiplicative version
$\mathcal{J}^{\times}$ by
\begin{equation}
\label{eq:JtimesDef}
  \mathcal{J}^{\times}(N')
  = 
  \sum_{n_1} \frac{1}{n_1^2}
 \sum_{k' \asymp \frac{QT}{N}} \sum_{\substack{b \asymp \frac{Q}{n_1} \\ (b, k') = 1}}
 \int_{y \asymp \Phi}
 \Big| 
 \sum_{n_2 \asymp \frac{N'}{n_1^2}} 
\frac{\lambda_f(n_1, n_2)}{n_2}
S(\eta  \overline{k'}, n_2 ;b) 
n_2^{iy} 
w(\cdot)
\Big|^2 dy.
 \end{equation}
 
\begin{myprop}
\label{prop:JwrtJplus}
We have
\begin{equation}
\label{eq:JboundNonOsc}
 I_{0}(T)
\ll 
T^{\varepsilon}\max_{\eta \in \{\pm 1\}}
\max_{\substack{Q \ll C }}
\sum_{d \ll C/Q}\max_{\substack{N' \ll \frac{N^2}{d^3 C^3} T^{\varepsilon} }}
\frac{ N^{1/3}}{Q^2}(N')^{4/3} 
\cdot
\mathcal{J}^{+}(N').
\end{equation}
\end{myprop}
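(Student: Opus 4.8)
The plan is to derive \eqref{eq:JboundNonOsc} from Proposition \ref{prop:SafterVoronoi} by three routine manipulations: dropping the coprimality condition $n_1 \mid c$ in favor of the substitution $c = n_1 b$, converting the oscillatory factor $e_c(3(n_1^2 n_2 \nu t)^{1/3})$ into the clean archimedean phase appearing in $\mathcal{J}^{+}$, and collecting the powers of $N', Q, d$ and the $k'$-range. First I would take the bound \eqref{eq:I00TDeltaBound} and, for each divisor $n_1 \mid c$ appearing in $\mathcal{A}$, write $c = n_1 b$ with $b \asymp Q/n_1$ and $(b,k')=1$ (the condition $(c,k')=1$ forces $(b,k')=1$; the finitely many common factors of $n_1$ and $k'$ contribute negligibly or can be absorbed into $T^{\varepsilon}$). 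One then pulls the sum over $n_1$ outside the square at the cost of a factor $d(c) \ll T^{\varepsilon}$ via Cauchy--Schwarz, so that $|\mathcal{A}|^2$ is bounded by $T^{\varepsilon}$ times $\sum_{n_1 \mid c} n_1^{-2}$ times the square of the inner $n_2$-sum; this is exactly the shape of the summand defining $\mathcal{J}^{+}$.

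Next I would handle the archimedean phase. In \eqref{eq:A1def} the oscillatory factor is $e_c\big(3(n_1^2 n_2 \nu t)^{1/3}\big)$ with $\nu = cT'/k'$ and $t \asymp 1$; substituting $\nu = cT'/k'$ gives $\frac{3}{c}(n_1^2 n_2 \cdot \frac{cT'}{k'} t)^{1/3} = 3 (n_1^2 n_2 T' t)^{1/3} c^{-2/3} (k')^{-1/3}$. Using $c \asymp Q$ and $k' \asymp QT/N$, the quantity $c^{-2/3}(k')^{-1/3} T'^{1/3}$ is of order $Q^{-2/3}(QT/N)^{-1/3} T^{1/3} = Q^{-1} N^{1/3}$ up to the inert fluctuations in $c/Q$, $k' N/(QT)$, and $t$. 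Writing $v$ for the resulting variable of integration (which ranges over $v \asymp 1$ after absorbing these bounded factors into the $T^{\varepsilon}$-inert weight $w(\cdot)$), the phase becomes $e\big(\frac{3(N n_1^2 n_2)^{1/3}}{Q} v\big)$, matching \eqref{eq:JplusDef}. The change of variables from $t$ to $v$ is smooth with bounded Jacobian, and the Kloosterman sum $S(\eta\overline{k'}, n_2; c/n_1) = S(\eta\overline{k'}, n_2; b)$ is unaffected.

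Finally I would account for the prefactors. Proposition \ref{prop:SafterVoronoi} carries the weight $N\big(\frac{N'^2}{NQ^3}\big)^{2/3} = N^{1/3} (N')^{4/3} Q^{-2}$, and after the manipulations above the remaining integral and sums are precisely $\mathcal{J}^{+}(N')$ up to the $\lambda_f(n_1,n_2)/(n_1 n_2)$ weights, which in \eqref{eq:JplusDef} are recorded as $n_1^{-2} \cdot |\lambda_f(n_1,n_2)/n_2|^2$ inside the square (the extra $n_1^{-2}$ being the square of the $1/n_1$ together with the Cauchy--Schwarz divisor factor, all inert-absorbable up to $T^{\varepsilon}$). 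Taking maxima over $\eta$, over $Q \ll C$, over $N' \ll \frac{N^2}{d^3 C^3} T^{\varepsilon}$, and summing over $d \ll C/Q$ exactly as in \eqref{eq:I00TDeltaBound} yields \eqref{eq:JboundNonOsc}, with the negligible $O(T^{-100})$ error absorbed. The only step requiring genuine care is the phase conversion: one must check that the bounded quantities $c/Q$, $k' N/(QT)$, $t$, and the lower-order Taylor terms from Lemma \ref{lemma:IntegralAnalysis} can all be folded into a single $T^{\varepsilon}$-inert function of the listed variables without destroying inertness, but this is standard given Proposition \ref{prop:statphase} and the conventions on inert families, so I expect no real obstacle.
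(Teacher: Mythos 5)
Your proposal matches the paper's proof essentially step for step: Cauchy--Schwarz to pull the $n_1$-sum outside the square (costing a divisor factor $\ll T^{\varepsilon}$), the substitution $c = n_1 b$, and the change of variable $v = \frac{Q}{c}\left(\frac{\nu t}{N}\right)^{1/3}$ (equivalently, absorbing $c^{-2/3}(k')^{-1/3}T'^{1/3}t^{1/3}$ into $v \asymp 1$ with bounded Jacobian) to produce the clean phase $e\big(\frac{3(Nn_1^2n_2)^{1/3}}{Q}v\big)$ of $\mathcal{J}^{+}$. The only small imprecision is the parenthetical claiming the factor $n_1^{-2}$ sits ``inside the square''; in \eqref{eq:JplusDef} it sits outside, arising as the square of the $1/n_1$ weight combined with the divisor bound, but this does not affect the correctness of the argument.
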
 
\begin{proof}
We first use Cauchy's inequality to take the sum over $n_1$ (in $\mathcal{A}$, in \eqref{eq:I00TDeltaBound}) to the outside.  We then interchange the orders of summation and change variables $c = bn_1$.  
Finally, we change variables $v=\frac{Q}{c}\left(\frac{\nu t}{N}\right)^{1/3}$, which maintains an interval of size $\asymp 1$ (and we can over-extend to an interval independent of all parameters), to 
give \eqref{eq:JboundNonOsc}.
\end{proof}

Our next step is to relate $\mathcal{J}^{+}$ and $\mathcal{J}^{\times}$, since we prefer to work with the multiplicative characters. Let
$\Phi$ denote the size of the phase in \eqref{eq:JplusDef}, namely, 
\begin{equation}
\label{eq:PhiDef}
\Phi = \Phi(N') = \frac{(N' N)^{1/3}}{Q}.
\end{equation}
\begin{myprop}
\label{prop:JplusJtimesetc}
Let $\mathcal{J}^{+}$ be defined by \eqref{eq:JplusDef}.  Suppose $\Phi=\Phi(N') \gg T^{\varepsilon}$.
Then $\mathcal{J}^{+}(N') \ll \Phi^{-1}T^{\varepsilon} \cdot \mathcal{J}^{\times}(N') + O(T^{-100})$.
\end{myprop}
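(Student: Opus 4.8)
The plan is to replace the archimedean factor $e\big(3(Nn_1^2 n_2)^{1/3}v/Q\big)w(\cdot)$ in \eqref{eq:JplusDef} by a multiplicative character $n_2^{iy}$ via Mellin inversion in $n_2$, and then to exploit the near-orthogonality hidden in the $v$-integral to gain the factor $\Phi^{-1}$. First I would fix $n_1,k',b$ and $v\asymp1$ and write the weight as $g_v(x)=e(\beta_v x^{1/3})w(x,\cdot)$ with $\beta_v=3(Nn_1^2)^{1/3}v/Q$; since $n_2\asymp N'/n_1^2$ on the support of $w$ we have $\beta_v n_2^{1/3}\asymp(N'N)^{1/3}/Q=\Phi$ uniformly in $n_1$, where $\Phi=\Phi(N')$ is as in \eqref{eq:PhiDef}. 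Inserting a redundant $T^\varepsilon$-inert dyadic bump $\widetilde w$ at scale $N'/n_1^2$ that is identically $1$ on the support of $w$, Mellin inversion gives $g_v(n_2)\widetilde w(n_2)=\frac{1}{2\pi}\int_{\mathbb R}\widetilde g_v(iy)\,n_2^{-iy}\,dy$ with $\widetilde g_v(s)=\int_0^\infty e(\beta_v x^{1/3})\tfrac{w(x,\cdot)}{x}\,x^{s}\,dx$, so the inner sum of \eqref{eq:JplusDef} becomes $\frac{1}{2\pi}\int_{\mathbb R}\widetilde g_v(iy)A(y)\,dy$ with $A(y)=\sum_{n_2}\widetilde w(n_2)\tfrac{\lambda_f(n_1,n_2)}{n_2}S(\eta\overline{k'},n_2;b)\,n_2^{-iy}$.

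Next I would analyse $\widetilde g_v(iy)$ by stationary phase. The phase $2\pi\beta_v x^{1/3}+y\log x$ has $x$-derivative $\tfrac{2\pi\beta_v}{3}x^{-2/3}+\tfrac{y}{x}$, which vanishes only for $y\asymp-\Phi$; hence, by Lemma \ref{lemma:integrationbyparts} and the hypothesis $\Phi\gg T^\varepsilon$, $\widetilde g_v(iy)$ is $O(T^{-100})$ away from $y\asymp-\Phi$. On that range Proposition \ref{prop:statphase} (with oscillation parameter $\asymp\Phi$ and $x\asymp N'/n_1^2$), after normalising out the amplitude size $\asymp(N'/n_1^2)^{-1}$, yields
\[
\widetilde g_v(iy)=\Phi^{-1/2}e^{i\psi(y,v)}W(y,v,\cdot)+O(T^{-100}),
\]
where $W$ is a $T^\varepsilon$-inert family in $y$ (scale $\Phi$) and in $v$ (scale $1$), and, by the envelope theorem together with $\partial_v\beta_v=\beta_v/v$, one has $\partial_v\psi(y,v)=-3y/v$; thus $\psi(y_1,v)-\psi(y_2,v)$ equals, up to a $v$-independent constant, $-3(y_1-y_2)\log v$, whose $v$-derivative is $\asymp|y_1-y_2|$. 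Relabelling $y\mapsto-y$ turns $n_2^{-iy}$ into $n_2^{iy}$ and the $y$-range into $y\asymp\Phi$, matching \eqref{eq:JtimesDef}.

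Finally I would substitute this into \eqref{eq:JplusDef}, open the square, and perform the $v$-integral first:
\[
\int_{v\asymp1}\Big|\int\widetilde g_v(iy)A(y)\,dy\Big|^2dv=\int\!\!\int\Big(\int_{v\asymp1}\widetilde g_v(iy_1)\overline{\widetilde g_v(iy_2)}\,dv\Big)A(y_1)\overline{A(y_2)}\,dy_1dy_2+O(T^{-100}).
\]
Inserting the stationary-phase form and applying Lemma \ref{lemma:integrationbyparts} in $v$ (phase of size $\asymp|y_1-y_2|$, amplitude $T^\varepsilon$-inert) shows the inner $v$-integral is $\ll_A\Phi^{-1}(1+|y_1-y_2|)^{-A}$ once $|y_1-y_2|\gg T^\varepsilon$ and trivially $\ll\Phi^{-1}$ otherwise; bounding $|A(y_1)||A(y_2)|\le|A(y_1)|^2+|A(y_2)|^2$ and integrating out the near-diagonal then gives $\int_{v\asymp1}|\cdots|^2dv\ll\Phi^{-1}T^\varepsilon\int_{y\asymp\Phi}|A(y)|^2dy$. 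Summing trivially over $n_1$ (with weight $n_1^{-2}$), $k'$ and $b$, which are untouched by these steps, yields $\mathcal{J}^+(N')\ll\Phi^{-1}T^\varepsilon\mathcal{J}^\times(N')+O(T^{-100})$, the weight in $\mathcal{J}^\times$ being $\widetilde w(n_2)$.

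I expect the main obstacle to be getting the order of operations right in the last step: a direct Cauchy--Schwarz over $y$, using $\int_{|y|\asymp\Phi}|\widetilde g_v(iy)|\,dy\ll\Phi^{1/2}$ and $\int_{v\asymp1}|\widetilde g_v(iy)|\,dv\ll\Phi^{-1/2}$, recovers only $\mathcal{J}^+\ll\mathcal{J}^\times$ and loses exactly the factor $\Phi^{-1}$ one needs; the $v$-integration must be carried out before any Cauchy--Schwarz so as to use the near-orthogonality of the characters $v\mapsto v^{-3iy}$ encoded in $\widetilde g_v$. The rest is routine bookkeeping: keeping track of the $T^\varepsilon$-inert weights produced by stationary phase, and checking that $\Phi\gg T^\varepsilon$ makes every integration-by-parts savings parameter at least $T^\varepsilon$, so all discarded terms are genuinely $O(T^{-100})$.
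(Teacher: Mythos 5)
Your proof is correct and takes essentially the same approach as the paper: Mellin inversion converts the phase into multiplicative characters supported on $y\asymp\Phi$ carrying a stationary-phase weight of size $\Phi^{-1/2}$, and then performing the $v$-integration before any Cauchy--Schwarz localizes $|y_1-y_2|\ll T^{\varepsilon}$, after which AM--GM finishes. The only cosmetic difference is that the paper takes the Mellin transform treating $Xn_2^{1/3}v$ as a single variable, so the factor $(Xv)^{iy}$ exhibits the $v$-phase $(y_1-y_2)\log v$ directly without needing your envelope-theorem computation of $\partial_v\psi=-3y/v$ for $\widetilde g_v$.
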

\begin{proof}
 For $X, n_2 > 0$ let
 \begin{equation*}
  h(v, n_2, \cdot) = e(X n_2^{1/3} v) w(v,\cdot),
 \end{equation*}
where $ X n_2^{1/3} \asymp \Phi  \gg T^{\varepsilon}$.  By Mellin inversion,
\begin{equation*}
 h(v,n_2, \cdot) = \int_{-\infty}^{\infty} \widetilde{h}(-iy, \cdot) (X n_2^{1/3} v)^{iy} w(v, \cdot) dy,
\end{equation*}
where
\begin{equation*}
 \widetilde{h}(-iy, \cdot) = \frac{1}{2 \pi} \int_0^{\infty} h(t, n_2, \cdot) (X n_2^{1/3} t)^{-iy} w(t, \cdot) \frac{dt}{t}.
\end{equation*}
The standard integration by parts argument shows that $\widetilde{h}(-iy, \cdot)$ is small outside of $y \asymp \Phi$.
Moreover, the stationary phase bound implies $|\widetilde{h}(-iy, \cdot)| \ll \Phi^{-1/2}$.  Hence for arbitrary coefficients $a_n$ we have
\begin{multline*}
 \int_{v \asymp 1} \Big|\sum_{n_2} a_{n_2} h(v, n_2, \cdot) \Big|^2 dv
 \\
 = \int_{v \asymp 1} \Big|\int_{y \asymp \Phi} \widetilde{h}(-iy, \cdot) (Xv)^{iy} \sum_{n_2} a_{n_2} n_2^{iy/3} dy \Big|^2 dv + O( T^{-200}\| a_{n_2} \|^2 ).
\end{multline*}
To analyze this further, first attach a smooth nonnegative weight function $w(\cdot)$ to the outer $v$-integral, and then square it out.  We encounter an integral of the form $\int_{v} w(v) v^{iy_1 - iy_2} dv$, which localizes on $|y_1 - y_2| \ll T^{\varepsilon}$.  Hence the above expression is bounded by
\begin{equation*}
 \int_{\substack{|y_1 - y_2| \ll T^{\varepsilon} \\ y_1, y_2 \asymp \Phi }} |\widetilde{h}(-iy_1 , \cdot) \widetilde{h}(-iy_2 , \cdot)|
 \Big| \sum_{n_2, n_2'} a_{n_2} \overline{a_{n_2'}} n_2^{iy_1/3} (n_2')^{-iy_2/3} \Big| dy_1 dy_2
+ O(T^{-200} \| a_{n_2} \|^2) 
 .
\end{equation*}
Next we use the AM-GM inequality and symmetry to bound this in turn by
\begin{equation*}
 \int_{\substack{|y_1 - y_2| \ll T^{\varepsilon} \\ y_1, y_2 \asymp \Phi }} (|\widetilde{h}(-iy_1 , \cdot)|^2 + |\widetilde{h}(-iy_2 , \cdot)|^2) 
 \Big| \sum_{n_2} a_{n_2} n_2^{iy_1/3}\Big|^2 dy_1 dy_2 + O(T^{-200} \| a_{n_2} \|^2).
\end{equation*}
We use that $|\widetilde{h}|^2 \ll \Phi^{-1}$, and that the integral over the ``other'' variable is at most $T^{\varepsilon}$.  We finally change variables $y_1 \rightarrow 3 y_1$, giving the claimed bound once we make a suitable choice of the coefficients $a_{n_2}$.
\end{proof}

We have a similar result in the non-oscillatory range with $\Phi \ll T^{\varepsilon}$. 
\begin{myprop}
\label{prop:JplusJtimesetcNonOsc}
Let $\mathcal{J}^{+}$ be defined by \eqref{eq:JplusDef}.  Suppose $\Phi=\Phi(N') \ll T^{\varepsilon}$.
Then $\mathcal{J}^{+}(N') \ll   T^{\varepsilon} \cdot \mathcal{J}_0^{\times}(N') + O(T^{-100})$, where $\mathcal{J}_0^{\times}$ is defined by
\begin{equation}
\label{eq:JtimesDefNonOsc}
  \mathcal{J}_0^{\times}(N')
  = 
  \sum_{n_1} \frac{1}{n_1^2}
 \sum_{k' \asymp \frac{QT}{N}} \sum_{\substack{b \asymp \frac{Q}{n_1} \\ (b, k') = 1}}
 \Big| 
 \sum_{n_2 \asymp \frac{N'}{n_1^2}} 
\frac{\lambda_f(n_1, n_2)}{n_2}
S(\eta  \overline{k'}, n_2 ;b) 
w(\cdot)
\Big|^2 dy.
 \end{equation}
\end{myprop}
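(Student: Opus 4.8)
The plan is to run the argument of Proposition~\ref{prop:JplusJtimesetc} in a simpler form: when $\Phi \ll T^\varepsilon$ the exponential factor in \eqref{eq:JplusDef} no longer oscillates, so instead of expanding it in multiplicative characters via Mellin inversion we simply absorb it into the inert weight, and the $v$-integral over the bounded interval $v \asymp 1$ becomes a harmless $O(1)$ factor.

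First I would record the relevant inertness property of the phase. Write $\phi(v, n_2; \cdot) = \tfrac{3(N n_1^2 n_2)^{1/3}}{Q}\, v$ for the argument of the exponential in \eqref{eq:JplusDef}. On the support of the summand we have $n_1^2 n_2 \asymp N'$ and $v \asymp 1$, so $\phi \asymp \Phi \ll T^\varepsilon$ by \eqref{eq:PhiDef}. Differentiating, $\partial_v^a \phi = 0$ for $a \geq 2$ and $v\,\partial_v \phi = \phi \asymp \Phi$, while $n_2^b\, \partial_{n_2}^b \phi \ll \Phi$ for every $b \geq 0$ (each application of $n_2 \partial_{n_2}$ multiplies by a bounded constant). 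By the Leibniz rule it follows that $e(\phi)$, and hence $e(\phi)\, w(\cdot)$, is a member of a $T^\varepsilon$-inert family in the variables $v$ and $n_2$ (and trivially in the remaining parameters $n_1, k', b$). Denote this new $T^\varepsilon$-inert weight by $\widetilde w(\cdot)$.

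With the oscillation absorbed, $\mathcal{J}^+(N')$ equals the right-hand side of \eqref{eq:JtimesDefNonOsc} with $w$ replaced by $\widetilde w$ and with an extra integration $\int_{v \asymp 1}(\cdots)\,dv$ inserted. For each fixed $n_1, k', b$ the integrand is a continuous function of $v$ on the bounded interval $v \asymp 1$ (the support of $w$ being compact), so the $v$-integral is $\ll \max_{v \asymp 1} \big| \sum_{n_2}(\cdots)\widetilde w(v,\cdot) \big|^2$. Specializing the inert family $\widetilde w$ at a maximizing value $v = v_0$ produces a $T^\varepsilon$-inert function of the remaining variables, which is an admissible weight in \eqref{eq:JtimesDefNonOsc}. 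Summing the resulting terms over $n_1, k', b$ reproduces the shape of $\mathcal{J}_0^\times(N')$, so $\mathcal{J}^+(N') \ll T^\varepsilon \mathcal{J}_0^\times(N')$; the term $O(T^{-100})$ in the statement is included only for uniformity with Proposition~\ref{prop:JplusJtimesetc} and may be dropped, since every quantity in sight is nonnegative.

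I do not expect a genuine obstacle. The one point deserving care is the claim that multiplication by $e(\phi)$ preserves $T^\varepsilon$-inertness; this is the non-oscillatory counterpart of the stationary-phase bound $|\widetilde h(-iy,\cdot)| \ll \Phi^{-1/2}$ used in Proposition~\ref{prop:JplusJtimesetc}, with the gain $\Phi^{-1/2}$ now replaced by the trivial bound $1$. This is also why the two propositions differ by the factor $\Phi^{-1}$: in the oscillatory range one retains a $y$-integral of length $\asymp \Phi$ together with the saving $\Phi^{-1}$ coming from $|\widetilde h|^2$, of the same total size as the $O(1)$ cost of the bounded $v$-integral in the present, non-oscillatory range.
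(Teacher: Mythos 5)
Your proof is correct and takes exactly the same approach as the paper's (one-sentence) argument: in the range $\Phi \ll T^\varepsilon$ the exponential $e(\phi)$ is $T^\varepsilon$-inert and is absorbed into the weight, after which the $v$-integral over the bounded interval $v \asymp 1$ costs only an $O(1)$ factor. Your detailed verification of the scaled-derivative bounds and the reduction of the $v$-integral to a pointwise maximum are both sound and simply flesh out what the paper leaves implicit.
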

\begin{proof}
In this case, the exponential function is $T^{\varepsilon}$-inert, so it can be absorbed into the inert weight function.
\end{proof}

\subsection{Reduction to norms}
We define a norm
\begin{equation}
\mathcal{N}_1(N', Q, k, Y) = \max_{\| \alpha\| =1} 
\sum_{\substack{b \asymp Q \\ (b,k) = 1}} 
  \int_{|y| \asymp Y}
  \Big| \sum_{\substack{n_2 \asymp N'}} \alpha_{n_2} 
   S(\eta \overline{k}, n_2 ;b) 
    n_2^{iy} \Big|^2 dy.
\end{equation}
Here, $\alpha_{n_2}$ are arbitrary complex numbers and $\| \alpha \| = \left(\sum_{n_2}\abs{\alpha_{n_2}}^2 \right)^\frac12$. 
We also define a version of this norm without the integral:
\begin{equation*}
\mathcal{N}_1(N', Q, k ) =  \max_{\| \alpha \| =1} 
\sum_{\substack{b \asymp Q \\ (b,k) = 1}} 
  \Big| \sum_{\substack{n_2 \asymp N'}} \alpha_{n_2} 
   S(\eta \overline{k}, n_2 ;b) 
    \Big|^2.
\end{equation*}

We define a slight variant of $\mathcal{N}_1$ by
\begin{equation}
\label{eq:N2normdef}
\mathcal{N}(N', Q, k, Y) = \max_{X \geq 1} X \cdot\mathcal{N}_1 \Big(\frac{N'}{X^2}, \frac{Q}{X}, k, Y\Big),
\end{equation}
and similarly 
\begin{equation}
\label{eq:N2normdefVariant}
\mathcal{N}(N', Q, k ) = \max_{X \geq 1} X \cdot \mathcal{N}_1 \Big(\frac{N'}{X^2}, \frac{Q}{X}, k\Big).
\end{equation}

\begin{mylemma}
\label{lemma:JboundInTermsOfNorm}
 Let $\Phi$ be as in \eqref{eq:PhiDef}.  If $\Phi \gg T^{\varepsilon}$, then
 \begin{equation*}
  \mathcal{J}^{\times}(N') \ll \frac{  T^{\varepsilon}}{N'} \sum_{k'\asymp\frac{QT}{N}}\mathcal{N}(N', Q, k', \Phi).
 \end{equation*}
 If $\Phi \ll T^{\varepsilon}$, then
 \begin{equation*}
  \mathcal{J}_0^{\times}(N') \ll \frac{  T^{\varepsilon}}{N'} \sum_{k'\asymp\frac{QT}{N}}\mathcal{N}(N', Q, k').
 \end{equation*}
 \end{mylemma}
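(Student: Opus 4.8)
The plan is to reduce, for each fixed $n_1$ and $k'$, the inner double sum over $b \asymp Q/n_1$ and $y \asymp \Phi$ appearing in \eqref{eq:JtimesDef} to a single instance of the norm $\mathcal{N}_1$, with first argument $N'/n_1^2$ and second argument $Q/n_1$, the coefficient sequence being $\alpha_{n_2} \propto \lambda_f(n_1,n_2)/n_2$. The sum over $n_1$ is then absorbed using the dilation built into the definition \eqref{eq:N2normdef} of $\mathcal{N}$, and the accumulated $\ell^2$-mass of the coefficients across all $n_1$ is exactly the quantity estimated by Lemma \ref{lemma:RankinSelbergBound}.

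To carry this out, fix $n_1$ and $k' \asymp QT/N$. The inert weight $w(\cdot)$ in \eqref{eq:JtimesDef} belongs to a $T^\varepsilon$-inert family and may depend on $b$, $y$ and further suppressed parameters besides $n_2$, so it cannot simply be pulled out. I will Mellin-decompose $w$ in the $n_2$-variable alone: for $n_2 \asymp N'/n_1^2$ one writes $w = \frac{1}{2\pi}\int_{\mathbb{R}}\widehat{w}(\xi;\cdot)\,(n_2 n_1^2/N')^{-i\xi}\,d\xi$, where repeated integration by parts gives $\widehat{w}(\xi;\cdot) \ll_A T^{\varepsilon}(1+|\xi|)^{-A}$ \emph{uniformly} in the remaining variables, so the integral may be truncated to $|\xi| \ll T^{\varepsilon}$ with error $O(T^{-100})$. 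Substituting this, pulling the unimodular constant $(n_1^2/N')^{-i\xi}$ outside $|\cdot|^2$, applying Cauchy--Schwarz in $\xi$ together with the uniform bound on $\widehat{w}$, and shifting $y \mapsto y+\xi$ (which keeps $y \asymp \Phi$ precisely because $\Phi \gg T^{\varepsilon}$), one obtains
\[
\sum_{\substack{b \asymp Q/n_1\\(b,k')=1}}\int_{y\asymp\Phi}\Big|\sum_{n_2}\frac{\lambda_f(n_1,n_2)}{n_2}S(\eta\overline{k'},n_2;b)\,n_2^{iy}\,w(\cdot)\Big|^2 dy \ll T^{\varepsilon}\,\|\alpha\|^2\,\mathcal{N}_1\Big(\frac{N'}{n_1^2},\frac{Q}{n_1},k',\Phi\Big) + O(T^{-100}),
\]
where $\alpha_{n_2}=\lambda_f(n_1,n_2)/n_2$ for $n_2 \asymp N'/n_1^2$ and $0$ otherwise, so that $\|\alpha\|^2 \ll \sum_{n_2 \asymp N'/n_1^2}|\lambda_f(n_1,n_2)|^2/n_2^2$; the last inequality is the definition of $\mathcal{N}_1$ applied to the unit vector $\alpha/\|\alpha\|$.

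It remains to sum over $n_1$. Taking $X=n_1\geq 1$ in \eqref{eq:N2normdef} gives $\mathcal{N}_1(N'/n_1^2,Q/n_1,k',\Phi) \leq n_1^{-1}\mathcal{N}(N',Q,k',\Phi)$, so plugging back into \eqref{eq:JtimesDef},
\[
\mathcal{J}^{\times}(N') \ll T^{\varepsilon}\sum_{k'\asymp\frac{QT}{N}}\mathcal{N}(N',Q,k',\Phi)\sum_{n_1}\frac{1}{n_1^{3}}\sum_{n_2\asymp N'/n_1^2}\frac{|\lambda_f(n_1,n_2)|^2}{n_2^2} + O(T^{-100}).
\]
The double sum equals $\sum_{n_1^2 n_2 \asymp N'}|\lambda_f(n_1,n_2)|^2/(n_1^3 n_2^2)$, which is $\ll N'^{-1+\varepsilon}$ by Lemma \ref{lemma:RankinSelbergBound} (with $a=n_1$, $b=n_2$, $X=N'$), giving the first assertion. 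The case $\Phi \ll T^{\varepsilon}$ is handled identically with $\mathcal{J}_0^\times$ from \eqref{eq:JtimesDefNonOsc} and $\mathcal{N}(N',Q,k')$ from \eqref{eq:N2normdefVariant}: there is no $y$-integral, the oscillatory factor has already been absorbed into $w$, and the Mellin step is needed only to strip the residual $b$-dependence of $w$ off the coefficients.

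I expect the main obstacle to be the weight-separation step: one must decompose $w$ only in $n_2$ and exploit the \emph{uniformity} in the other variables of the decay of its Mellin transform, while checking that the induced additive shift of $y$ does not leave the dyadic block $y \asymp \Phi$ — which is exactly where the hypothesis $\Phi \gg T^{\varepsilon}$ is used. Everything else is bookkeeping: matching the four parameters against the definition of $\mathcal{N}_1$, applying the dilation \eqref{eq:N2normdef}, and invoking Lemma \ref{lemma:RankinSelbergBound}.
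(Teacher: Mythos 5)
Your proposal is correct and follows the same route as the paper: reduce the inner $(b,y)$-sum to $\mathcal{N}_1(N'/n_1^2, Q/n_1, k', \Phi)$ with $\alpha_{n_2} \propto \lambda_f(n_1,n_2)/n_2$, absorb $n_1$ via the $X$-dilation in \eqref{eq:N2normdef}, and invoke Lemma \ref{lemma:RankinSelbergBound}. The paper's proof is essentially two lines and does not spell out how the inert weight $w(\cdot)$ — which a priori can depend on $n_2$, $b$, and $y$ — is stripped off before taking $\max_{\|\alpha\|=1}$; your Mellin decomposition in $n_2$ followed by the $O(T^\varepsilon)$-shift of $y$ (legitimate because $\Phi \gg T^\varepsilon$) is a correct way to make that step rigorous, and is consistent with standard inert-function manipulations the paper treats as implicit.
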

 \begin{proof}
The proof for $\Phi \ll T^{\varepsilon}$ is easier, and the details are nearly identical, so we focus on $\Phi \gg T^{\varepsilon}$.
By the definitions of $\mathcal{J}^{\times}$ and $\mathcal{N}_1$, we have
 \begin{equation*}
  \mathcal{J}^{\times}(N') \ll 
  \sum_{n_1} \frac{1}{n_1^2}
\sum_{k'\asymp\frac{QT}{N}}\mathcal{N}_1\Big(\frac{N'}{n_1^2}, \frac{Q}{n_1}, k', \Phi\Big)
\cdot 
\sum_{n_2 \asymp \frac{N'}{n_1^2}} \frac{|\lambda_f(n_1, n_2)|^2}{  n_2^2}.
  \end{equation*}
  Writing this with the norm $\mathcal{N}$, we have 
   \begin{equation*}
  \mathcal{J}^{\times}(N') \ll 
\sum_{k'\asymp\frac{QT}{N}}\mathcal{N}(N', Q, k', \Phi)
\cdot 
 \sum_{n_1^2 n_2 \asymp N'} \frac{|\lambda_f(n_1, n_2)|^2}{n_1^3 n_2^2}.
 \end{equation*}
  Using Lemma \ref{lemma:RankinSelbergBound}  completes the proof.
  \end{proof}

Now we finally link the long second moment of interest with the norm $\mathcal{N}$ with the following.
\begin{myprop}
\label{prop:secondmomentVSnorm}
We have
\begin{multline}
\label{eq:secondmomentVSnorm}
\int_{T}^{2T} |L(f, 1/2+it)|^2 dt \ll 
T^\varepsilon
\max_{\eta_4 \in \{\pm 1\}}
\sum_{d}
\max_{\substack{Q \ll C/d \\ N' \ll \frac{N^2 T^{\varepsilon}}{d^3 C^3}}}
\sum_{k'\asymp\frac{QT}{N}}\Big(
\frac{1}{Q}
\mathcal{N}(N', Q, k', \Phi(N'))
\\
+\frac{1}{Q} \delta(\Phi(N') \ll T^\varepsilon)\mathcal{N}(N',Q,k')
\Big)
.
\end{multline}
\end{myprop}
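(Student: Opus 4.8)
The plan is to assemble \eqref{eq:secondmomentVSnorm} by chaining together the reductions already in place, keeping careful track only of the powers of $N'$ and of $\Phi$ that accumulate. As the starting point I would combine \eqref{eq:ITjsum} with the identity $I(T) = I_0(T) + O(NT^{-1+\varepsilon})$; since $N \ll T^{3/2+\varepsilon}$ this error is $O(T^{1/2+\varepsilon})$, and together with the term $T^{4/3+\varepsilon}$ it is absorbed into the right-hand side of \eqref{eq:secondmomentVSnorm}. It therefore suffices to bound $I_0(T)$, and for this I would first invoke Proposition~\ref{prop:JwrtJplus}: this bounds $I_0(T)$ by $T^\varepsilon$ times the maximum over $\eta$ and $Q \ll C$ of $\sum_{d \ll C/Q}\max_{N'}\frac{N^{1/3}}{Q^2}(N')^{4/3}\,\mathcal{J}^{+}(N')$, where the inner maximum runs over dyadic $N' \ll N^2 T^\varepsilon/(d^3C^3)$.

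Next, for each such $N'$ I would split on the size of $\Phi = \Phi(N') = (N'N)^{1/3}/Q$ from \eqref{eq:PhiDef}. In the oscillatory range $\Phi \gg T^\varepsilon$, Proposition~\ref{prop:JplusJtimesetc} replaces $\mathcal{J}^{+}(N')$ by $\Phi^{-1}T^\varepsilon\,\mathcal{J}^{\times}(N')$, and Lemma~\ref{lemma:JboundInTermsOfNorm} then replaces $\mathcal{J}^{\times}(N')$ by $(N')^{-1}T^\varepsilon\sum_{k'\asymp QT/N}\mathcal{N}(N',Q,k',\Phi)$; in particular the $k'$-sum hidden inside $\mathcal{J}^{+}$ resurfaces as the outer $k'$-sum in \eqref{eq:secondmomentVSnorm}. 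In the non-oscillatory range $\Phi \ll T^\varepsilon$, one instead uses Proposition~\ref{prop:JplusJtimesetcNonOsc} (giving $\mathcal{J}^{+}(N') \ll T^\varepsilon\mathcal{J}_0^{\times}(N')$) followed by the second half of Lemma~\ref{lemma:JboundInTermsOfNorm} (giving $\mathcal{J}_0^{\times}(N') \ll (N')^{-1}T^\varepsilon\sum_{k'\asymp QT/N}\mathcal{N}(N',Q,k')$); this is precisely the source of the term carrying $\delta(\Phi(N') \ll T^\varepsilon)$ in \eqref{eq:secondmomentVSnorm}.

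The heart of the argument is the exponent bookkeeping, and the point is that it collapses nicely. Using $N^{1/3}(N')^{1/3} = Q\Phi$, the accumulated prefactor in the oscillatory case is
\[
\frac{N^{1/3}}{Q^2}\,(N')^{4/3}\cdot\Phi^{-1}\cdot(N')^{-1} = \frac{N^{1/3}(N')^{1/3}}{Q^2\Phi} = \frac1Q,
\]
while in the non-oscillatory case it is
\[
\frac{N^{1/3}}{Q^2}\,(N')^{4/3}\cdot(N')^{-1} = \frac{N^{1/3}(N')^{1/3}}{Q^2} = \frac{\Phi}{Q} \ll \frac{T^\varepsilon}{Q};
\]
either way one arrives at the $1/Q$ weight in \eqref{eq:secondmomentVSnorm}.

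To finish I would tidy up the quantifiers: the arrangement $\max_{Q \ll C}\sum_{d \ll C/Q}(\cdots)$ is bounded by $\sum_{d}\max_{Q \ll C/d}(\cdots)$ (legitimate as all the summands are nonnegative), which simultaneously rewrites the constraints in the form $Q \ll C/d$, $N' \ll N^2 T^\varepsilon/(d^3 C^3)$ appearing in the proposition; the various tiny $O(T^{-100})$-type errors accrued along the way (from Propositions~\ref{prop:JplusJtimesetc} and \ref{prop:JplusJtimesetcNonOsc}, among others) are negligible, and the several $T^\varepsilon$ factors combine into one. I do not anticipate a real obstacle here, since all the analytic work has been done in the preceding results; the only thing that genuinely requires care is keeping the exponent cancellation exact while correctly propagating every parameter range, and a minor point to check is that the leftover $T^{4/3+\varepsilon}$ is indeed dominated by the right-hand side.
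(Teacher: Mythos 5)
Your proposal is correct and follows essentially the same route as the paper's proof: chain Proposition~\ref{prop:JwrtJplus}, Propositions~\ref{prop:JplusJtimesetc} and~\ref{prop:JplusJtimesetcNonOsc}, and Lemma~\ref{lemma:JboundInTermsOfNorm}, then observe (via $Q\Phi = N^{1/3}(N')^{1/3}$) that the accumulated prefactors collapse to $1/Q$. Your exponent bookkeeping matches the paper's verification $N\bigl(\tfrac{N'^2}{NQ^3}\bigr)^{2/3}\tfrac{1}{\Phi N'} \asymp Q^{-1}$, and the $\max_Q\sum_d \leq \sum_d\max_{Q\ll C/d}$ interchange you justify explicitly is carried out silently in the paper. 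The caveat you raise about the leftover $T^{4/3+\varepsilon}$ term from \eqref{eq:ITjsum} is a fair one that the paper glosses over (its proof really bounds $I_0(T)$ rather than the full second moment), but it is harmless for the theorem since the eventual bound is $T^{4/3+\varepsilon}$ regardless.
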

\begin{proof}
   Recall Proposition \ref{prop:JwrtJplus} for  the bound on the long second moment in terms of the quantity $\mathcal{J}^+$. Propositions \ref{prop:JplusJtimesetc} and \ref{prop:JplusJtimesetcNonOsc} translate it to $\mathcal{J}^{\times}$. Then Lemma \ref{lemma:JboundInTermsOfNorm} bounds $\mathcal{J}^{\times}$ in terms of the norm $\mathcal{N}$.   Chaining these results together gives that the second moment is (essentially) bounded from above via
\begin{multline*}
   I_{0}(T)
\ll 
T^\varepsilon
\max_{\eta  \in \{\pm 1\}}
\sum_{d}
\max_{\substack{Q \ll C/d \\ N' \ll \frac{N^2 T^{\varepsilon}}{d^3 C^3}}}
\frac{ N^{1/3}}{Q^2} (N')^{1/3}
 \\
 \times \sum_{k'\asymp\frac{QT}{N}}\left(\Phi^{-1}\mathcal{N}(N',Q,k',\Phi)+\delta(\Phi\ll T^\varepsilon)\mathcal{N}(N',Q,k')\right),
\end{multline*}
where recall from \eqref{eq:PhiDef} that $\Phi = \frac{(N' N)^{1/3}}{Q}$.
Note that
\begin{equation*}
N
\Big(\frac{N'^2}{N Q^3}\Big)^{2/3} \frac{1}{\Phi N'}
\asymp Q^{-1}.
\end{equation*}
Hence
this bound simplifies as stated in the proposition. 
\end{proof}

\subsection{Variant norm and the large sieve}
\label{section:largesieveinterlude}
Define the following variant norm
\begin{equation*}
\mathcal{N}_2(N', Q, K, Y) = \max_{\|\alpha \| =1} 
 \sum_{|k| \asymp K } 
\sum_{\substack{b \asymp Q \\ (b,k) = 1}} 
  \int_{y \asymp Y}
  \Big| \sum_{\substack{n_2 \asymp N'}} \alpha_{n_2} 
   S(\eta \overline{k}, n_2 ;b) 
    n_2^{iy} \Big|^2 dy.
\end{equation*}
One can similarly define a norm without the integral, and also define the variant
$\mathcal{N}_3(N', Q, K, Y) = \max_{X \geq 1} X \cdot \mathcal{N}_2(N'/X^2, Q/X, K, Y)$, etc.  
Then we could just as well replace 
$\sum_{k'} \mathcal{N}(N', Q, k', \Phi)$ in
the right hand side of 
\eqref{eq:secondmomentVSnorm} with $\mathcal{N}(N', Q, K, \Phi)$ with $K= QT/N$.
Our motivation for studying $\mathcal{N}_1$ in place of $\mathcal{N}_2$ was justified at the end of Section \ref{subsec:Outline}.

To help gauge the depth of our forthcoming methods, we record here a straightforward bound on $\mathcal{N}_2$ following from the large sieve inequality.

\begin{mylemma}
\label{lemma:largesievenormbound}
Suppose $K \ll Q$.  Then
 \begin{equation}
 \label{eq:N2largesieve}
  \mathcal{N}_2(N', Q, K, Y) \ll Q (Q^2 Y + N').
 \end{equation}
The same bound holds for $\mathcal{N}_3(N', Q, K, Y)$.
\end{mylemma}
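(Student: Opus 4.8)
The plan is to open the Kloosterman sum, extract from the sum over $k$ an operator norm that can be controlled using the shortness of the $k$-interval relative to the modulus, and reduce what remains to a hybrid large sieve inequality.

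I would begin by opening the Kloosterman sum, $S(\eta\overline{k},n_2;b)=\sumstar_{t\bmod b}e_b(\eta\overline{k}t+n_2\overline{t})$, so that for each $b$, $y$ and $k$ coprime to $b$,
\begin{equation*}
\sum_{n_2\asymp N'}\alpha_{n_2}S(\eta\overline{k},n_2;b)n_2^{iy}=\sumstar_{t\bmod b}e_b(\eta\overline{k}t)\,G_b(t,y),\qquad G_b(t,y):=\sum_{n_2\asymp N'}\alpha_{n_2}e_b(n_2\overline{t})n_2^{iy}.
\end{equation*}
For fixed $b$ and $y$ the map $(G_b(t,y))_t\mapsto\big(\sumstar_{t\bmod b}e_b(\eta\overline{k}t)G_b(t,y)\big)_{|k|\asymp K,\,(k,b)=1}$ is linear, so there is a quantity $A_b$ (the square of its $\ell^2\to\ell^2$ operator norm) with
\begin{equation*}
\sum_{\substack{|k|\asymp K\\(k,b)=1}}\Big|\sumstar_{t\bmod b}e_b(\eta\overline{k}t)G_b(t,y)\Big|^2\le A_b\sumstar_{t\bmod b}|G_b(t,y)|^2 .
\end{equation*}
Here $A_b$ equals the operator norm of the Hermitian matrix of dimension $O(K)$ whose $(k,k')$ entry is $\sumstar_{t\bmod b}e_b\big(\eta(\overline{k}-\overline{k'})t\big)$, a Ramanujan sum (equal to $\varphi(b)$ when $k=k'$, and of absolute value at most $(\overline{k}-\overline{k'},b)$ in general).

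The crux of the proof is the bound $A_b\ll(b+K)Q^{\varepsilon}$, which for $b\asymp Q$ and $K\ll Q$ gives $A_b\ll Q^{1+\varepsilon}$. Bounding the operator norm of this matrix by its largest absolute row sum (valid as it is Hermitian), the diagonal contributes $\varphi(b)\ll b$. For the off-diagonal entries, $\overline{k}-\overline{k'}\equiv\overline{kk'}(k'-k)\pmod{b}$ together with $(kk',b)=1$ gives $(\overline{k}-\overline{k'},b)=(k'-k,b)$, so the off-diagonal part of a row is at most $\sum_{0<|h|\ll K}(h,b)$. Since the window $|h|\ll K$ is no longer than the modulus $b\asymp Q$, only divisors $d\ll K$ of $b$ occur here, each contributing $d\cdot\#\{0<|h|\ll K:d\mid h\}\ll d\cdot(K/d)=K$, so $\sum_{0<|h|\ll K}(h,b)\ll K\tau(b)\ll KQ^{\varepsilon}$; this is exactly where the hypothesis $K\ll Q$ enters. (One must avoid applying Cauchy--Schwarz to the $t$-sum before summing over $k$: that replaces $A_b$ by $\varphi(b)\cdot\#\{|k|\asymp K\}\ll KQ$, losing a whole power of $Q$.)

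Finally, I would sum the second display over $b\asymp Q$, integrate over $y\asymp Y$, and re-index $a:=\overline{t}$, obtaining
\begin{equation*}
\mathcal{N}_2(N',Q,K,Y)\ll Q^{1+\varepsilon}\max_{\|\alpha\|=1}\sum_{b\asymp Q}\int_{y\asymp Y}\sumstar_{a\bmod b}\Big|\sum_{n_2\asymp N'}\alpha_{n_2}e_b(n_2a)n_2^{iy}\Big|^2dy\ll Q^{1+\varepsilon}(Q^2Y+N'),
\end{equation*}
the last inequality being a hybrid large sieve inequality for additive characters at Farey fractions (the additive analogue of Lemma~\ref{lem:largesieve}; alternatively, expand the additive characters into Dirichlet characters by Gauss sums and invoke Lemma~\ref{lem:largesieve} directly). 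This gives the bound for $\mathcal{N}_2$, the factor $Q^{\varepsilon}$ being harmless in context. For $\mathcal{N}_3$ one uses $A_b\ll(b+K)Q^{\varepsilon}$ with modulus $b\asymp Q/X$ and coefficient length $N'/X^2$; then $X\cdot\mathcal{N}_2(N'/X^2,Q/X,K,Y)\ll(Q/X^2+K/X)Q^{\varepsilon}(Q^2Y+N')$, which for $X\ge1$ is maximized (up to $Q^{\varepsilon}$) at $X=1$ and gives the same bound. The main difficulty is precisely the Gram-matrix estimate $A_b\ll Q^{1+\varepsilon}$: one has to keep the bilinear structure intact so that averaging over the short $k$-interval really exploits the cancellation in the (complete) Ramanujan sums, rather than collapsing the Kloosterman sum prematurely.
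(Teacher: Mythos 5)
Your proof is correct and arrives at the right conclusion (modulo a harmless $Q^{\varepsilon}$ factor, which you rightly dismiss as immaterial), but it takes a genuinely different and more elaborate route than the paper. The paper observes that since $K \ll Q \asymp b$, one may simply extend the $k$-sum to a \emph{complete} residue system modulo $b$ (and drop the coprimality condition), whereupon opening the Kloosterman sums and using exact orthogonality of additive characters in $k$ immediately produces
\begin{equation*}
 \sum_{k \shortmod{b}} S(\eta k, n;b)\,\overline{S(\eta k, n';b)} = b \sumstar_{t \shortmod{b}} e_b\bigl((n-n') t\bigr),
\end{equation*}
i.e.\ a clean factor of $b$ and a decoupling into additive characters, with no $\varepsilon$-loss. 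By contrast, you keep the $k$-sum short, form the $O(K) \times O(K)$ Gram matrix of Ramanujan sums $c_b(\overline{k}-\overline{k'})$, and bound its spectral norm by the largest row sum, using the identity $(\overline{k}-\overline{k'},b)=(k-k',b)$ together with the divisor bound; this requires the hypothesis $K\ll Q$ in the same place, but costs you a $\tau(b)\ll Q^{\varepsilon}$. Note in particular that the diagonal of your Gram matrix is already $\varphi(b)\asymp b$, so your operator-norm argument can never beat the paper's clean factor of $b$; your approach is therefore strictly a more complicated way to reach essentially the same intermediate inequality. Where it might offer an advantage is in a hypothetical situation where one could not afford the full positivity extension in $k$ — but here the extension is free. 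Both proofs then finish with the same hybrid large sieve for additive characters at Farey fractions, and your handling of the $\mathcal{N}_3$ variant by rescaling $b\asymp Q/X$, $n_2\asymp N'/X^2$ is correct.
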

\begin{proof}
Since $K \ll Q$, we can extend the sum over $k$ to a complete period modulo $b$, and also drop the condition that $\overline{k}$ is coprime to $b$.  Then
\begin{equation*}
 \sum_{\substack{k \asymp K \\ (k, b) = 1}} \Big| \sum_{n} \alpha_n S(\eta \overline{k}, n;b) n^{iy} \Big|^2
 \ll \sum_{k \shortmod{b}} \Big| \sum_n \alpha_n S(\eta k, n;b) n^{iy} \Big|^2.
\end{equation*}
We have from orthogonality of characters that
\begin{align*}
 \sum_{k \shortmod{b}} S(\eta k, n;b) S(\eta k, n';b)
= b \sumstar_{t \shortmod{b}} e_b((n-n') t).
 \end{align*}
Hence
\begin{equation*}
 \mathcal{N}_2(N', Q, K, Y) \ll  \max_{\|\alpha \| =1} 
\sum_{b \asymp Q} b 
 \sumstar_{t \shortmod{b}} 
  \int_{y \asymp Y}
  \Big| \sum_{\substack{n_2 \asymp N'}} \alpha_{n_2} 
   e_b(n_2 t)
    n_2^{iy} \Big|^2 dy.
\end{equation*}
The proof of \eqref{eq:N2largesieve} is completed using the hybrid large sieve inequality.
\end{proof}

Next we briefly check what Lemma \ref{lemma:largesievenormbound} implies for the long second moment.  
Ignoring minor technical details, 
we should obtain the bound
\begin{equation*}
    \int_T^{2T} |L(f, 1/2+it)|^2 dt
    \ll T^{\varepsilon} (Q^2 \Phi + N') \ll T^{\varepsilon} \Big(N + \frac{N^2}{C^3}\Big).
\end{equation*}
The term $N$ is the problem, as it only leads to a bound of the same quality as the mean value theorem for Dirichlet polynomials (as in \eqref{eq:MVTDPbound}).  
For what it is worth,
the other term $N^2/C^3$ would be acceptable for Theorem \ref{thm:mainthm}.

\subsection{Norm bound and deduction of Theorem \ref{thm:mainthm}}
The following theorem gives an improved estimate compared to Lemma \ref{lemma:largesievenormbound}.
\begin{mytheo}
\label{thm:dualnormbound}
 Suppose $k \ll Q$. Let $\mathrm{rad}(k)=\displaystyle\prod_{\substack{p|k\\\text{prime}}}p$. Then 
\begin{equation}
 \mathcal{N}(N', Q, k, Y) \ll T^{\varepsilon} Q\left( N' + \frac{k Q^2 Y^2}{\sqrt{\mathrm{rad}(k)N'}} + Q \sqrt{\frac{N'}{\mathrm{rad}(k)}} \right).
\end{equation}
In addition,
\begin{equation*}
\mathcal{N}(N', Q, k) \ll T^{\varepsilon} Q\left( N' + \frac{k Q^2}{\sqrt{\mathrm{rad}(k)N'}} + Q \sqrt{\frac{N'}{\mathrm{rad}(k)}} \right).
\end{equation*}
 \end{mytheo}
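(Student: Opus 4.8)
The plan is to prove the stated bound first for the ``building-block'' norm $\mathcal N_1(N',Q,k,Y)$, and then to deduce both displayed inequalities from the defining relations \eqref{eq:N2normdef}--\eqref{eq:N2normdefVariant}: each of the three terms in the target bound for $\mathcal N_1$, after the substitution $(N',Q)\mapsto(N'/X^2,Q/X)$ and multiplication by $X$, is non-increasing in $X\ge 1$, so taking $\max_{X\ge1}$ costs nothing; and $\mathcal N_1(N',Q,k)$ is the specialization in which the $y$-integral and the twist $n_2^{iy}$ are absent, i.e. effectively ``$Y\asymp1$ with no dual frequency $v$''. Thus it suffices to bound $\mathcal N_1$.

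For $\mathcal N_1$ I would run the argument sketched in \S\ref{subsec:Outline}. Invoke the duality principle to pass to the dual expression \eqref{eq:dualnorm}; attach a fixed smooth majorant to the dyadically localized variable $n\asymp N'$; open the square (the Kloosterman sum being real), producing sums over $b_1,b_2\asymp Q$ and $y_1,y_2\asymp Y$; and apply Poisson summation in $n$ to the modulus $\ell=\mathrm{lcm}(b_1,b_2)\ll Q^2$. The zero frequency forces $b_1=b_2$, since $\sum_{a\bmod\ell}S(\eta\overline k,a;b_1)S(\eta\overline k,a;b_2)$ vanishes unless $b_1=b_2$, and then the smoothness of the weight localizes $|y_1-y_2|\ll T^\varepsilon$; this produces the diagonal contribution $\ll T^\varepsilon QN'$, the first term of the bound. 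The nonzero frequencies form a short dual sum of length $\asymp YQ^2/N'$, whose generic term carries an archimedean integral of size $\asymp N'/\sqrt Y$ (by stationary phase, Proposition~\ref{prop:statphase}) together with, after the Chinese Remainder Theorem, the finite Fourier transform of the Kloosterman product.

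I would handle the coprime case $(b_1,b_2)=1$ first, where that transform is $\asymp b_1 b_2\,e_{b_1}(\cdot)e_{b_2}(\cdot)$. Reciprocity lowers the arithmetic modulus from $\asymp Q^2$ to $\asymp mk$, where $m$ denotes the dual variable ($m\ll YQ^2/N'$), at the cost of a mild slowly-varying factor absorbed into the archimedean analysis; stationary phase then yields a phase of shape $(\tfrac{b_1 b_2(y_1-y_2)}{m})^{i(y_1-y_2)}$ which separates $b_1$ and $b_2$ up to the remaining arithmetic factor $e_{mk}(b_2\overline{b_1}+b_1\overline{b_2})$. The key device is the multiplicative Fourier expansion \eqref{eq:padicsep}: $e_{mk}(b_2\overline{b_1}+b_1\overline{b_2})=\sum_{\chi\bmod mk}\widehat G(\chi)\chi(b_1)\overline\chi(b_2)$ with $\widehat G(\chi)=S_\chi(1,1;mk)/\varphi(mk)$, which fully separates $b_1$ from $b_2$. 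After Cauchy--Schwarz in the $m$- and $\chi$-sums the problem reduces to bounding $\sum_m\sum_{\chi\bmod mk}|\widehat G(\chi)|\int\!\!\int \big|\sum_{b\asymp Q}\beta(b,y)\chi(b)b^{iv}\big|^2\,dy\,dv$ with $|v|\ll Y$. I would bound $|\widehat G(\chi)|$ using Lemma~\ref{lemma:WeilTwisted}: multiplicativity of the twisted Kloosterman sum together with cases (1)--(4) gives a saving of at least $\sqrt p$ at each prime $p\mid mk$, hence $|\widehat G(\chi)|\ll(mk)^\varepsilon/\sqrt{\mathrm{rad}(mk)}$ uniformly in $\chi$; the hybrid large sieve (Lemma~\ref{lem:largesieve}) applied to the $b$-sum twisted by $\chi\bmod mk$ and $b^{iv}$ then contributes $(mkY+Q)\,\|\beta\|^2$. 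Summing over $m\ll YQ^2/N'$ with Lemma~\ref{lemma:radicalsum} (which is exactly what converts $\mathrm{rad}(mk)^{-1/2}$ into the factor $\mathrm{rad}(k)^{-1/2}$ on average) and multiplying by the $N'/\sqrt Y$ prefactor gives the remaining two terms $Q\cdot\tfrac{kQ^2Y^2}{\sqrt{\mathrm{rad}(k)N'}}$ and $Q\cdot Q\sqrt{N'/\mathrm{rad}(k)}$; the no-integral bound $\mathcal N(N',Q,k)$ drops out on taking $Y\asymp1$ and omitting $v$.

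The main obstacle I anticipate is the character-sum bookkeeping rather than the (now routine) large sieve. Two points in particular: first, the general case $(b_1,b_2)=g>1$, where the Kloosterman product must be decomposed over the common part $g$ and the coprime parts $b_1/g,b_2/g$, the Fourier transform is no longer a clean product, and one must verify this yields nothing larger than the coprime case; second, the absence of clean square-root cancellation for $\widehat G(\chi)=S_\chi(1,1;mk)/\varphi(mk)$ at prime-power moduli, which forces reliance on the refined bounds of Lemma~\ref{lemma:WeilTwisted} and on the fact that the sparsity of higher prime powers compensates for the weaker individual saving, so that the effective bound $|\widehat G(\chi)|\ll(mk)^{-1/2+\varepsilon}$ behaves as if Weil's bound held at every modulus. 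A further point of care is tracking the three scales — archimedean size $Y$, arithmetic modulus $\asymp Q^2$ before reciprocity and $\asymp mk$ afterwards — so that the truncation of the dual sum and the stationary-phase normalization remain valid across the admissible ranges of $N',Q,Y$ and $k\ll Q$.
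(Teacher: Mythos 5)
Your roadmap is the paper's roadmap: dualize, Poisson in $n_2$, separate the diagonal ($b_1=b_2$, giving $\ll T^\varepsilon QN'$), use reciprocity to lower the modulus to $\asymp mk$, separate $b_1$ from $b_2$ multiplicatively via Dirichlet characters mod $mk$, bound the Fourier coefficient $\widehat G(\chi)$ prime-by-prime using Lemma~\ref{lemma:WeilTwisted}, apply the hybrid large sieve, and sum over $m$ with Lemma~\ref{lemma:radicalsum}. Your reduction of $\mathcal N$ to $\mathcal N_1$ via monotonicity in $X$ is correct, and your accounting of sizes in the coprime case does reproduce the claimed bound.

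The gap is that you have only \emph{named}, not resolved, the two hard steps, and the first of them is the technical core of the theorem. The coprime scenario $(b_1,b_2)=1$ is the ``opposite extreme'' that the paper's introduction presents as a sketch precisely because it is the easy case. When $g=(b_1,b_2)>1$, the finite Fourier transform $\widehat S$ from \eqref{eq:ShatDef} does \emph{not} reduce to $e_{mk}(b_2\overline{b_1}+b_1\overline{b_2})$: Lemma~\ref{lemma:ShatEvaluationPartial} shows that after the CRT one is left with a residual double sum over $t_1\bmod g_1$, $t_2\bmod g_2$ constrained by a linear congruence, and the variable $m$ is now $n''/(g_1,g_2)$ rather than $n''$ itself. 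The $p$-adic analysis of the resulting function $G(x,g_1,g_2,k,m,A_1,A_2)$ (Lemma~\ref{lemma:GtwistedMultiplicative} for twisted multiplicativity, Lemma~\ref{lemma:Hbound} for the prime-by-prime bounds) is where the real work happens. In particular, your heuristic that ``the general case yields nothing larger than the coprime case'' is not immediate: at a prime with $\gamma_1\ne\gamma_2\geq 1$ the bound from Lemma~\ref{lemma:Hbound}(1) carries a factor $p^{\min(\gamma_1,\gamma_2)/2}$ which is \emph{larger} than the coprime-case saving, and this is only offset downstream by the shortened $m$-sum (since $(g_1,g_2)\mid n''$) together with the $1/g_1$ that appears in the $Q/(ag_1)$ term of the large sieve. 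Likewise, the trichotomy of prime types and the observation that type-II primes contribute no more than type-I primes (via the coset argument in the final section) is not something one can wave through. Your second flagged obstacle — the absence of uniform square-root cancellation in $S_\chi$ at higher prime powers — is handled correctly in spirit (sparsity compensates), but again the precise cases (2b), (3), (4) of Lemma~\ref{lemma:Hbound}, distinguishing $e(\chi)\leq\mu$ from $e(\chi)=\mu+1$ and $\gamma=1$ from $\gamma>1$, are load-bearing, not cosmetic. As written, your proposal establishes the result only for the (zero-measure) portion of the $(b_1,b_2)$-sum with $\gcd(b_1,b_2)=1$; to close the argument you need to carry out the analogues of Lemmas~\ref{lemma:ShatEvaluationPartial}, \ref{lemma:GtwistedMultiplicative}, and~\ref{lemma:Hbound} and verify that the large-sieve application survives the extra $g$-dependence, which is exactly what the paper spends most of Section~\ref{sect:boundnorm} doing.
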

We will prove Theorem \ref{thm:dualnormbound} in Section \ref{sect:boundnorm}.
Taking Theorem \ref{thm:dualnormbound} for granted, we now complete the proof of Theorem \ref{thm:mainthm}.  
Applying the estimates from Theorem \ref{thm:dualnormbound} into \eqref{eq:secondmomentVSnorm} 
and using Lemma \ref{lemma:radicalsum}, we obtain
\begin{multline}
\label{eq:secondmomentAfterNormBound}
I_0(T) \ll T^\varepsilon
\max_{\eta \in \{\pm 1\}}
\sum_d
\max_{\substack{Q \ll C/d \\ N' \ll \frac{N^2 T^{\varepsilon}}{d^3 C^3}}}\sum_{k'\asymp\frac{QT}{N}} \left(N' + 
\frac{k'}{\sqrt{\mathrm{rad}(k')}}
\frac{ Q^2 \Phi^2}{\sqrt{N'}} + Q \sqrt{\frac{N'}{\mathrm{rad}(k')}}\right)\\
\ll T^\varepsilon\left(\frac{NT}{C^2}+\frac{CT^{3/2}}{\sqrt{N}}+\sqrt{NT}\right).
\end{multline}
We make the optimal choice $C = N^{1/2} T^{-1/6}$ (which is consistent with \eqref{eq:RsizeDef}) to give
\begin{equation*}
    \int_{T}^{2T} |L(f, 1/2+it)|^2 dt
    \ll T^{\varepsilon} \left( T^{4/3} +\sqrt{NT}\right).
\end{equation*}
Note the latter term is relatively smaller, taking the form
$\sqrt{NT} \ll T^{5/4+\varepsilon}$.

\section{Bounding the norm: proof of Theorem \ref{thm:dualnormbound}}\label{sect:boundnorm}
This section is devoted to proving Theorem \ref{thm:dualnormbound}.

\subsection{Duality}
Our approach to estimate $\mathcal{N}_1$ is to use duality.  The dual definition of $\mathcal{N}_1$ is
\begin{equation}
\mathcal{N}_1(N', Q, k, Y) = \max_{\|\beta\| =1} 
\sum_{\substack{n_2 \asymp N'}}
\Big|
\sum_{b \asymp Q} 
  \int_{y \asymp Y}
  \beta(b,y) 
   S(\eta \overline{k}, n_2 ;b) 
    n_2^{iy} dy \Big|^2.
\end{equation}
Notice that $\beta$ may depend on $k$ and for convenience we assume that $\beta(b,y)$ is supported on integers with $(b, k) = 1$. We also assume $\eta = 1$, since the analysis of $\mathcal{N}_1$ from $\eta=-1$ is essentially the same.

\subsection{Poisson summation}
We derive a bound on $\mathcal{N}_1$ using Poisson summation. Opening the square we obtain
\begin{equation*}
\mathcal{N}_1(N', Q, k, Y)
\ll
\max_{ \|\beta\| =1}  
\sum_{\substack{b_1, b_2}}
\int_{y_1, y_2} \beta(b_1,  y_1)
\overline{\beta(b_2,  y_2)} 
\cdot
 S(\cdot) dy_1 dy_2,
\end{equation*}
where $S=S(\cdot) = S(b_1, b_2, k, y_1, y_2)$ is given by
\begin{equation*}
S = \sum_{n_2} w_{N'}(n_2) S(\overline{k}, n_2 ;b_1) S(\overline{k}, n_2 ;b_2 )
n_2^{iy_1 - i y_2}.
\end{equation*}
Applying Poisson summation modulo $b_1 b_2$ to $S$ gives
\begin{equation*}
S 
= \sum_{n'' \in \mz} \widehat{S} \cdot \widehat{I},
\end{equation*}
where
\begin{equation}
\label{eq:ShatDef}
\widehat{S} = 
\widehat{S}(b_1, b_2,k, n'') = 
\frac{1}{b_1 b_2} \sum_{x \shortmod{b_1 b_2}} S(\overline{k}, x ;b_1 ) S(\overline{k}, x ;b_2)
e_{b_1 b_2}(x n''),
\end{equation}
and
\begin{multline*}
\widehat{I} = \intR e\left(\frac{(y_1 -y_2)}{2 \pi} \log x  - \frac{x n''}{b_1 b_2}\right) w_{N'}( x) dx
\\
= (N')^{1+ iy_1 - i y_2} \intR e\left(\frac{(y_1 -y_2)}{2 \pi} \log x  - \frac{x n'' N'}{b_1 b_2}\right) w_{N'}(N' x) dx.
\end{multline*}
Let $\mathcal{N}_1 \leq \mathcal{N}_1^{(0)} + \sum_{\pm} |\mathcal{N}_1^{\pm}|$, where $\mathcal{N}_1^{(0)}$ consists of the term $n'' = 0$, and where $\mathcal{N}_1^{\pm}$ consists of the terms with $\pm n'' > 0$.  We will treat $\mathcal{N}_1^{(0)}$ and $\mathcal{N}_1^{+}$, since $\mathcal{N}_1^{-}$ is similar to the latter case.

It is convenient to apply a (partial) dyadic partition of unity of the form
\begin{equation*}
1 = \sum_{U} \omega\left(\frac{y_1 - y_2}{U}\right)=\sum_{Y^\varepsilon\ll U\ll Y} \omega\left(\frac{y_1 - y_2}{U}\right)+\omega_0\left(\frac{y_1-y_2}{Y^\varepsilon}\right),
\end{equation*}
where $\omega_0$ is some fixed smooth function supported on $[-1,1]$.
Let $\widehat{I}(U)$ and $\widehat{I}(1)$ be the portion of $\widehat{I}$ with $|y_1 - y_2| \asymp U$ and $|y_1-y_2|\ll Y^\varepsilon$ respectively.
Integration by parts shows $\widehat{I}(U)$ and $\widehat{I}(1)$ are very small except for
\begin{equation}
n'' \ll N'' := \frac{Q^2}{N'} (Y^{\varepsilon} + U).
\end{equation}
In addition, we have the essentially best-possible bound of
\begin{equation*}
\widehat{I}(1) \ll N'.
\end{equation*}
On the other hand for $U \gg Y^{\varepsilon}$, and $n'' > 0$ (as required in $\mathcal{N}_1^{+}$)
by Proposition \ref{prop:statphase} (stationary phase)
we have
\begin{equation}
\label{eq:IofUapproximation}
\widehat{I}(U) = \frac{N'}{\sqrt{U}} \Big(\frac{b_1 b_2 (y_1 -y_2)}{2 \pi e n''  } \Big)^{i(y_1 - y_2)} 
w
(\cdot) + O(T^{-2024}),
\end{equation}
for some inert function $w$ supported on $y_1-y_2\asymp U$ and 
\begin{equation}
\label{eq:n''size}
n'' \asymp \frac{Q^2 U}{N'}.
\end{equation}

The trickier part is to evaluate $\widehat{S}$.  As a partial evaluation, we have the following.
\begin{mylemma}
\label{lemma:ShatEvaluationPartial}
  Suppose $b_1 = g_1 b_1'$ and $b_2 = g_2 b_2'$ where $(b_1' b_2', g_1 g_2) = (b_1', b_2') = 1$, and $g_1$ and $g_2$ share all the same prime factors.  Let $g_0 = \frac{g_1g_2}{(g_1,g_2)}$ 
  and let $g_i' = \frac{g_i}{(g_1, g_2)}$. Then $\widehat{S} = 0$ unless $(g_1, g_2) | n''$.  Also, $\widehat{S} = 0$ unless $(n'', b_1' b_2') = 1$.
If $n'' = (g_1, g_2) m$, then
$\widehat{S}$ is given by
\begin{equation}
 \widehat{S} = 
 e_{b_1'}(-b_2' g_2' \overline{g_1 m k})
 e_{b_2'}(-b_1' g_1' \overline{g_2 m k})
 \mathop{
 \sumstar_{t_1 \shortmod{g_1}} \thinspace  \sumstar_{t_2 \shortmod{g_2}}}_{g_2' t_1 + g_1' t_2 + m \equiv 0 \shortmod{g_0}}
 e_{g_1}((b_2' \overline{b_1'}) \overline{t_1 k})
 e_{g_2}((b_1' \overline{b_2'}) \overline{t_2 k}).
\end{equation}
\end{mylemma}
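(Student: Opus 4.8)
The plan is to compute $\widehat S$ directly by opening both Kloosterman sums in \eqref{eq:ShatDef}, interchanging the order of summation so that the sum over $x$ runs innermost, and evaluating it by orthogonality. Writing $S(\overline k, x; b_i) = \sumstar_{u_i \bmod b_i} e_{b_i}(\overline k\, u_i + x\overline{u_i})$ and collecting the terms involving $x$, the innermost sum is $\sum_{x \bmod b_1 b_2} e_{b_1 b_2}\big(x\,(b_2\overline{u_1} + b_1\overline{u_2} + n'')\big)$, which equals $b_1 b_2$ times the indicator of the congruence $b_2\overline{u_1} + b_1\overline{u_2} + n'' \equiv 0 \pmod{b_1 b_2}$ (each summand being well-defined modulo $b_1 b_2$). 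After the substitution $v_i = \overline{u_i}$ this leaves
\[
\widehat S \;=\; \sumstar_{\substack{v_1 \bmod b_1,\ v_2 \bmod b_2 \\ b_2 v_1 + b_1 v_2 + n'' \,\equiv\, 0 \ (b_1 b_2)}} e_{b_1}(\overline{k v_1})\, e_{b_2}(\overline{k v_2}).
\]
In particular no multiplicativity property of the Kloosterman sum is needed; everything below is Chinese Remainder Theorem bookkeeping applied to this identity.

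First I would use the factorization $b_1 b_2 = b_1' \cdot b_2' \cdot (g_1 g_2)$ into pairwise coprime factors, which is precisely what the hypotheses $(b_1' b_2', g_1 g_2) = (b_1', b_2') = 1$ guarantee. Reducing the congruence modulo $b_1'$ kills the $b_1 v_2$ term and pins $v_1 \equiv -\overline{b_2}\, n'' \pmod{b_1'}$; this is solvable by a unit only when $(n'', b_1') = 1$, which gives the asserted vanishing, and symmetrically for $b_2'$. Reducing modulo $(g_1, g_2)$ forces $(g_1, g_2) \mid n''$, so we may set $n'' = (g_1,g_2)m$. Next I factor the additive characters as $e_{b_1}(\overline{k v_1}) = e_{g_1}(\overline{k v_1 b_1'})\, e_{b_1'}(\overline{k v_1 g_1})$ and likewise for $e_{b_2}$; evaluated on the pinned residue class the $b_1'$-factor becomes the constant $e_{b_1'}(-b_2\,\overline{k n'' g_1})$, and a short simplification (writing $b_2 = (g_1,g_2) g_2' b_2'$, $n'' = (g_1,g_2) m$, $g_1 = (g_1,g_2) g_1'$, and cancelling the common factor $(g_1,g_2)$) rewrites this as $e_{b_1'}(-b_2' g_2'\, \overline{g_1 m k})$; the $b_2'$-factor is handled in the same way.

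It remains to treat the $g_1 g_2$-part, where $g_1$ and $g_2$ share their prime factors and so cannot be separated by the CRT. Here I would instead use that, with $t_i := v_i \bmod g_i$, we have $g_2 b_2' v_1 \equiv g_2 b_2' t_1$ and $g_1 b_1' v_2 \equiv g_1 b_1' t_2 \pmod{g_1 g_2}$, and that the CRT isomorphism $v_i \leftrightarrow (t_i,\ v_i \bmod b_i')$ converts the two outer summations into sums over units $t_1 \bmod g_1$ and $t_2 \bmod g_2$, the $v_i \bmod b_i'$ components already having been pinned. Introducing $g_0 = g_1 g_2/(g_1,g_2)$ and $g_i' = g_i/(g_1,g_2)$ and using the identities $g_0 = g_1 g_2' = g_2 g_1'$ (so that $g_1 \mid g_0$ and $g_2 \mid g_0$), division of the congruence by $(g_1,g_2)$ yields $g_2' b_2' t_1 + g_1' b_1' t_2 + m \equiv 0 \pmod{g_0}$, while the surviving character pieces are $e_{g_1}(\overline{k t_1 b_1'})$ and $e_{g_2}(\overline{k t_2 b_2'})$. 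A final change of variables $t_1 \mapsto \overline{b_2'}\, t_1 \pmod{g_1}$ and $t_2 \mapsto \overline{b_1'}\, t_2 \pmod{g_2}$ — consistent with the congruence since $g_2' b_2' \overline{b_2'}\, t_1 \equiv g_2'\, t_1 \pmod{g_0}$ and symmetrically — turns this into exactly the stated constraint $g_2' t_1 + g_1' t_2 + m \equiv 0 \pmod{g_0}$ and the exponentials $e_{g_1}\big((b_2'\overline{b_1'})\,\overline{t_1 k}\big)$, $e_{g_2}\big((b_1'\overline{b_2'})\,\overline{t_2 k}\big)$, completing the identification.

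The main obstacle is not any single deep step but the sustained care required in the CRT bookkeeping: for each inverse symbol $\overline{\,\cdot\,}$ one must track the modulus with respect to which it is taken, and one must check that every reduction and change of variables is consistent with a congruence that is imposed modulo $g_0$ rather than modulo $g_1$ or $g_2$ individually. The identities $g_0 = g_1 g_2' = g_2 g_1'$ (equivalently $g_1 \mid g_0$ and $g_2 \mid g_0$) are exactly what make the final congruence and exponentials come out in the clean form stated.
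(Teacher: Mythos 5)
Your proposal is correct and follows essentially the same route as the paper's proof: open the Kloosterman sums, execute the $x$-sum by orthogonality to obtain a congruence $b_2 t_1 + b_1 t_2 + n'' \equiv 0 \pmod{b_1 b_2}$, split by CRT into the pairwise-coprime pieces $b_1'$, $b_2'$, $g_1 g_2$, pin $t_1 \bmod b_1'$ and $t_2 \bmod b_2'$, extract the constant exponential factors, and finish with the substitution $t_i \mapsto \overline{b_j'}\,t_i$ in the $g_1 g_2$-block. The only cosmetic difference is that you carry out the factorization of the additive characters $e_{b_i}$ a bit more explicitly than the paper does.
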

In subsequent developments, 
we will write $g_1 \sim g_2$ to mean that $g_1$ and $g_2$ share all the same prime factors.

\begin{proof}
  Opening the definitions of the Kloosterman sums in \eqref{eq:ShatDef} and summing over $x$ using orthogonality of characters, we obtain
 \begin{equation*}
  \widehat{S} = 
\mathop{\sumstar_{t_1 \shortmod{b_1}} \thinspace \sumstar_{t_2 \shortmod{b_2}}}_{b_2 t_1 + b_1 t_2 + n'' \equiv 0 \shortmod{b_1 b_2}}
e_{b_1}(\overline{t_1 k}) e_{b_2}(\overline{t_2 k}).
 \end{equation*}
By the Chinese remainder theorem, the linear congruence is equivalent to a system of congruences of moduli $b_1'$, $b_2'$, and $g_1 g_2$.  Modulo $b_1'$, it simplifies to read $b_2 t_1 + n'' \equiv 0 \pmod{b_1'}$, i.e., $t_1 \equiv - \overline{b_2} n'' \pmod{b_1'}$.  Similarly, we obtain $t_2 \equiv - \overline{b_1} n'' \pmod{b_2'}$.  
Note that there are no solutions to these congruences unless $(n'', b_1' b_2') = 1$.
 Hence
 \begin{equation*}
  \widehat{S} = 
e_{b_1'}(-b_2' g_2 \overline{g_1 n'' k}) e_{b_2'}(-  b_1' g_1 \overline{g_2 n'' k}) 
  \mathop{\sumstar_{t_1 \shortmod{g_1}} \thinspace \sumstar_{t_2 \shortmod{g_2}}}_{b_2' g_2 t_1 + b_1' g_1 t_2 + n'' \equiv 0 \shortmod{g_1 g_2}}
e_{g_1}(\overline{b_1' t_1 k}) 
e_{g_2}(\overline{b_2' t_2 k}).
 \end{equation*}
In addition, we note that the congruence implies $(g_1, g_2) | n''$.  Letting $m=\frac{n''}{(g_1, g_2)}$ we get
 \begin{equation*}
  e_{b_1'}(-b_2' g_2 \overline{g_1 n'' k})
  = e_{b_1'}(-b_2' g_2' \overline{g_1 m k}),
 \end{equation*}
and a similar equation holds for the other exponential factor, of modulus $b_2'$.  For the remaining sum, we change variables $t_1 \rightarrow (b_2')^{-1} t_1$ and $t_2 \rightarrow (b_1')^{-1} t_2$.  The congruence  is then equivalent to $g_2' t_1 + g_1' t_2 + m \equiv 0 \pmod{g_0}$.
This gives the result.
\end{proof}

\subsection{The diagonal term}
\begin{mylemma}\label{lemma:Diagonal}
We have
\begin{equation*}
\mathcal{N}_1^{(0)}(N', Q, k, Y) \ll 
Q N' Y^{\varepsilon}.
\end{equation*}
\end{mylemma}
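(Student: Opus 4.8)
The plan is to show that the zero frequency $n''=0$ forces the finite Fourier transform $\widehat S(b_1,b_2,k,0)$ to be genuinely diagonal in the pair $(b_1,b_2)$, after which the desired bound drops out from the normalization $\|\beta\|=1$. The first and only substantive step is to evaluate $\widehat S(b_1,b_2,k,0)$ from \eqref{eq:ShatDef}: opening the two Kloosterman sums and carrying out the complete sum over $x\bmod b_1b_2$ by orthogonality produces a linear congruence of the shape $b_2t_1+b_1t_2\equiv 0\pmod{b_1b_2}$ with $t_1,t_2$ ranging over reduced residues modulo $b_1,b_2$ respectively. Reducing this congruence modulo $b_1$ and modulo $b_2$ and using that $t_1,t_2$ are units forces $b_1\mid b_2$ and $b_2\mid b_1$, hence $b_1=b_2=:b$; in that case the congruence collapses to $t_2\equiv-t_1\pmod b$, the remaining exponential factor $e_b(\overline{t_1k})e_b(\overline{t_2k})$ becomes trivial, and one is left with $\widehat S(b,b,k,0)=\sumstar_{t_1\bmod b}1=\varphi(b)$. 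Thus
\[
\widehat S(b_1,b_2,k,0)=\varphi(b_1)\,\delta(b_1=b_2).
\]
(Equivalently, this is Lemma~\ref{lemma:ShatEvaluationPartial} specialised to $n''=0$: there the coprimality requirement forces $b_1'=b_2'=1$, and then $g_1\sim g_2$ together with the congruence $g_2't_1+g_1't_2\equiv0\pmod{g_0}$ forces $g_1=g_2$.)

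Next I would record what is needed about the archimedean factor. Since the $n''$-term of the phase vanishes, the integral $\widehat I$ with $n''=0$ is independent of $b_1,b_2$, obeys the trivial bound $\ll N'$, and --- its only oscillation being $x^{i(y_1-y_2)}$ on a region where $x\asymp1$, so that there is no stationary point once $|y_1-y_2|\gg Y^{\varepsilon}$ --- a routine integration by parts in $x$ shows it is $O(T^{-100})$ unless $|y_1-y_2|\ll Y^{\varepsilon}$. Substituting the diagonal evaluation of $\widehat S$ and these facts into the definition of $\mathcal{N}_1^{(0)}$ gives
\[
\mathcal{N}_1^{(0)}(N',Q,k,Y)\ll N'\max_{\|\beta\|=1}\sum_{b\asymp Q}\varphi(b)\int_{\substack{y_1,y_2\asymp Y\\ |y_1-y_2|\ll Y^{\varepsilon}}}\big|\beta(b,y_1)\big|\,\big|\beta(b,y_2)\big|\,dy_1\,dy_2+O(T^{-100}).
\]

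Finally I would close with crude estimates: bound $\varphi(b)\le b\ll Q$, apply $|\beta(b,y_1)||\beta(b,y_2)|\le\tfrac12(|\beta(b,y_1)|^2+|\beta(b,y_2)|^2)$, and exploit the symmetry $y_1\leftrightarrow y_2$ so that in each resulting term the integral over the free variable runs over a set of measure $\ll Y^{\varepsilon}$. This leaves $Q\,Y^{\varepsilon}\sum_{b\asymp Q}\int_{y\asymp Y}|\beta(b,y)|^2\,dy=QY^{\varepsilon}\|\beta\|^2=QY^{\varepsilon}$, hence $\mathcal{N}_1^{(0)}\ll QN'Y^{\varepsilon}$, as claimed. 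I expect the only point requiring care to be the evaluation of $\widehat S(b_1,b_2,k,0)$ --- namely, seeing that the underlying congruence pins $b_1=b_2$ --- since everything after that is the bookkeeping typical of a diagonal term.
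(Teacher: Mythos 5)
Your proof is correct and follows essentially the same route as the paper's: pin down the diagonal condition $b_1=b_2$ from the $n''=0$ congruence, evaluate $\widehat S(b,b,k,0)=\varphi(b)\ll Q$, use the $\widehat I$ restriction $|y_1-y_2|\ll Y^{\varepsilon}$ together with $\widehat I\ll N'$, and close with AM--GM and $\|\beta\|=1$. The only cosmetic difference is that you re-derive $\widehat S(b_1,b_2,k,0)$ directly from \eqref{eq:ShatDef} rather than specializing Lemma~\ref{lemma:ShatEvaluationPartial}, a point you yourself note.
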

\begin{proof}
 From Lemma \ref{lemma:ShatEvaluationPartial} we have that $n'' = 0$ implies that $b_1' = b_2' = 1$.  The congruence $g_2' t_1 + g_1' t_2 \equiv 0 \pmod{g_0}$ then implies that $g_1 = g_2$, since $(t_1, g_1) = (t_2, g_2) = 1$.  Then
\begin{equation*}
\widehat{S}(g, g, k, 0) = S(0, 0 ; g)=\phi(g)\ll Q.
\end{equation*}
From the earlier discussion on $\widehat{I}$ we may assume $|y_1 - y_2| \ll Y^{\varepsilon}$.  
Hence the contribution to $\mathcal{N}_1$ from $n''=0$ is bounded by
\begin{equation*}
QN' \max_{ \|\beta\| =1}  
\sum_{\substack{g}}
\int_{|y_1 - y_2| \ll Y^{\varepsilon}} \beta(g,  y_1)
\overline{\beta(g,  y_2)}
 dy_1 dy_2.
\end{equation*}
A simple use of the AM-GM inequality bounds this in turn by
\begin{equation*}
QN' Y^{\varepsilon} \max_{ \|\beta\| =1}  
\sum_{g} \int_{y}
|\beta(g,  y)|^2
 dy =  QN'Y^\varepsilon. \qedhere
\end{equation*}
\end{proof}

\subsection{Reciprocity}
Suppose $m \geq 1$ and $k \geq 1$.
Applying reciprocity gives 
\begin{equation*}
e_{b_1'}(-b_2' g_2' \overline{g_1 m k})
 e_{b_2'}(-b_1' g_1' \overline{g_2 m k})
 = 
 e_{g_1 m k}( b_2' \overline{b_1'} g_2')
 e_{g_2 m k}(b_1' \overline{b_2'} g_1')
 e\Big(\frac{-b_2' g_2'}{b_1' g_1 m k}\Big) e\Big(\frac{-b_1' g_1'}{b_2' g_2 m k}\Big).
\end{equation*}
Note the argument of one of the above exponential factors satisfies
\begin{equation*}
 \frac{b_2' g_2'}{b_1' g_1 m k}
 = \frac{b_2}{b_1  n'' k} \asymp \frac{1}{n'' k}  \ll 1,
\end{equation*}
and a similar bound holds for the other factor.  Hence these correction factors may be absorbed into the inert weight function.
For $x \in \mz$ with $\gcd(x, g_1 g_2 k m) = 1$, let
\begin{equation*}
 G(x, g_1, g_2, k, m)
 = e_{g_1 m k}(  \overline{x} g_2')
 e_{g_2 m k}(x g_1')
 \mathop{
 \sumstar_{t_1 \shortmod{g_1}} \thinspace  \sumstar_{t_2 \shortmod{g_2}}}_{g_2' t_1 + g_1' t_2 + m \equiv 0 \shortmod{g_0}}
 e_{g_1}( \overline{x t_1 k})
 e_{g_2}(x \overline{t_2 k}).
\end{equation*}
Then, (mixing some notation), we have
\begin{equation*}
 \widehat{S}(b_1, b_2, k, n'')
 = G(b_1' \overline{b_2'}, g_1, g_2, k, m) \cdot w(\cdot).
\end{equation*}
More generally, for
$A_1, A_2 \in \mz$ with
$(A_1, g_1 k m) = (A_2, g_2 k m)=1$, we define
\begin{multline}
\label{eq:GA1A2def}
 G(x, g_1, g_2, k, m, A_1, A_2) 
 \\
 = e_{g_1 m k}(  \overline{x} g_2' A_1)
 e_{g_2 m k}(x g_1' A_2)
 \mathop{
 \sumstar_{t_1 \shortmod{g_1}} \thinspace  \sumstar_{t_2 \shortmod{g_2}}}_{g_2' t_1 + g_1' t_2 + m \equiv 0 \shortmod{g_0}}
 e_{g_1}( \overline{x t_1 k} A_1)
 e_{g_2}(x \overline{t_2 k} A_2).
\end{multline}
Then $G(x, g_1, g_2, k, m) = G(x, g_1, g_2, k, m, 1, 1)$.

Summarizing the developments so far, we have
\begin{multline}
\label{eq:N1+ViaG}
\mathcal{N}_1^{+}(N', Q, k, Y)
\ll
\max_{ \|\beta\| =1}  
\max_{\substack{1 \ll U\ll Y}} \frac{N'}{\sqrt{U}}
\Big|
\mathop{\sum_{g_1, g_2}}_{g_1 \sim g_2}
\sum_{\substack{(b_1', b_2') = 1 \\ (b_1' b_2', g_1 g_2) = 1\\(b_1'b_2'g_1g_2,k)=1)}}
\sum_{m > 0} 
\int_{y_1, y_2} \beta(b_1' g_1,  y_1)
\overline{\beta(b_2' g_2,  y_2)} 
\\
G(b_1' \overline{b_2'}, g_1, g_2, k, m)
\Big(\frac{g_1 b_1' g_2 b_2' (y_1 -y_2)}{(g_1, g_2) m  \cdot 2 \pi e} \Big)^{i(y_1 - y_2)} 
\omega_U(y_1 - y_2)
w
(\cdot)
 dy_1 dy_2 \Big|.
\end{multline}
Here $\omega_U(x)=\omega(x/U)$ for $U\gg Y^\varepsilon$ and $w$ is some $T^\varepsilon$-inert function as before.

\subsection{Character sum bounds}
\begin{mylemma}
\label{lemma:GtwistedMultiplicative}
 Suppose for $i=1,2$, $g_i = g_i^{(1)} g_i^{(2)}$, $g_i'^{(1)}=g_i^{(1)}/(g_1^{(1)},g_2^{(1)})$, $g_i'^{(2)}=g_i^{(2)}/(g_1^{(2)},g_2^{(2)})$, $k = k^{(1)} k^{(2)}$, and $m = m^{(1)} m^{(2)}$.  For $i=1,2$, let $q^{(i)} = g_1^{(i)} g_2^{(i)} k^{(i)} m^{(i)}$, and suppose $(q^{(1)}, q^{(2)}) = 1$.  
 Let $A_1^{(1)} = A_1 \cdot(g_2')^{(2)}  \overline{g_1^{(2)} m^{(2)} k^{(2)}}$,
 $A_2^{(1)} =  A_2 \cdot  (g_1')^{(2)} \overline{g_2^{(2)} m^{(2)} k^{(2)}}$,
 $A_1^{(2)} = A_1 \cdot   (g_2')^{(1)} \overline{g_1^{(1)} m^{(1)} k^{(1)}} $, and $A_2^{(2)} = A_2 \cdot   (g_1')^{(1)} \overline{g_2^{(1)} m^{(1)} k^{(1)}}$.
 Then 
 \begin{equation*}
  G(x, g_1, g_2, k, m, A_1, A_2)
  = 
  \prod_{i=1}^{2} G(x, g_1^{(i)}, g_2^{(i)}, k^{(i)}, m^{(i)}, A_1^{(i)}, A_2^{(i)}).
 \end{equation*}
 \end{mylemma}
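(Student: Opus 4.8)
The plan is to derive this multiplicativity relation by applying the Chinese Remainder Theorem to every modulus occurring in \eqref{eq:GA1A2def} and tracking carefully the reciprocity twists that arise. First I would record the elementary structural facts. Since $g_1$ and $g_2$ share the same prime divisors and each of $g_1^{(i)}, g_2^{(i)}$ divides $q^{(i)}$ while $(q^{(1)}, q^{(2)}) = 1$, one checks that $g_1^{(i)} \sim g_2^{(i)}$, that $(g_1, g_2) = (g_1^{(1)},g_2^{(1)})(g_1^{(2)},g_2^{(2)})$, $g_0 = g_0^{(1)} g_0^{(2)}$ and $g_j' = (g_j')^{(1)}(g_j')^{(2)}$ all hold with coprime factors, and that $g_0^{(i)} = g_1^{(i)}(g_2')^{(i)} = g_2^{(i)}(g_1')^{(i)}$. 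Moreover, for $i \ne j$ each of $(g_\ell')^{(j)}, m^{(j)}, k^{(j)}$ divides $q^{(j)}$, hence is a unit modulo $g_1^{(i)}$, $g_2^{(i)}$, $g_0^{(i)}$, so that all the inverses written below exist.

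Next I would split the two prefactors of $G$. The modulus $g_1 m k$ of $e_{g_1 m k}(\overline{x}\, g_2' A_1)$ is the product of the coprime integers $g_1^{(1)} m^{(1)} k^{(1)}$ and $g_1^{(2)} m^{(2)} k^{(2)}$, so the identity $e_{uv}(a) = e_u(a\overline{v})\, e_v(a\overline{u})$, valid when $(u,v)=1$, gives
\[
 e_{g_1 m k}(\overline{x}\, g_2' A_1) = \prod_{i=1}^{2} e_{g_1^{(i)} m^{(i)} k^{(i)}}\!\big(\overline{x}\,(g_2')^{(i)} A_1^{(i)}\big),
\]
where the reciprocity factor $\overline{g_1^{(3-i)} m^{(3-i)} k^{(3-i)}}$, together with the part $(g_2')^{(3-i)}$ split off from $g_2' = (g_2')^{(1)}(g_2')^{(2)}$, is exactly what assembles into $A_1^{(i)}$ as defined in the statement. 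Treating $e_{g_2 m k}(x\, g_1' A_2)$ identically produces the factors $e_{g_2^{(i)} m^{(i)} k^{(i)}}\!\big(x\,(g_1')^{(i)} A_2^{(i)}\big)$ with $A_2^{(i)}$ as stated.

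Then I would split the congruence-constrained double sum. Writing $t_1 \bmod g_1 \leftrightarrow (t_1^{(1)} \bmod g_1^{(1)}, t_1^{(2)} \bmod g_1^{(2)})$ and likewise $t_2$, by the Chinese Remainder Theorem the coprimality conditions and the exponentials $e_{g_1}(\overline{x t_1 k}\, A_1)$, $e_{g_2}(x\overline{t_2 k}\, A_2)$ separate into their $(1)$- and $(2)$-parts. The congruence $(g_2') t_1 + (g_1') t_2 + m \equiv 0 \pmod{g_0}$ splits into the two congruences modulo $g_0^{(1)}$ and modulo $g_0^{(2)}$; because $g_0^{(i)}/(g_2')^{(i)} = g_1^{(i)}$, the term $(g_2') t_1$ depends, modulo $g_0^{(i)}$, only on $t_1^{(i)}$, and similarly for $(g_1') t_2$, so the $i$-th congruence involves only $t_1^{(i)}, t_2^{(i)}$. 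After the bijective substitutions $t_1^{(i)} \mapsto m^{(3-i)}\overline{(g_2')^{(3-i)}}\, t_1^{(i)}$ and $t_2^{(i)} \mapsto m^{(3-i)}\overline{(g_1')^{(3-i)}}\, t_2^{(i)}$ on the units --- which clear the cross-factors $m^{(3-i)}, (g_j')^{(3-i)}$ from the congruence and, combined with the cross-factor $\overline{k^{(3-i)} g_\ell^{(3-i)}}$ coming from the split exponential, fold everything into exactly $A_1^{(i)}$ and $A_2^{(i)}$ --- the collection of terms indexed by $i$ becomes precisely $G(x, g_1^{(i)}, g_2^{(i)}, k^{(i)}, m^{(i)}, A_1^{(i)}, A_2^{(i)})$. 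Multiplying the $i=1$ and $i=2$ contributions yields the lemma.

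The hard part is entirely the bookkeeping: one must verify that the unit twists produced by reciprocity in the prefactors and by the change of variables in the inner double sum combine into precisely the quantities $A_1^{(i)}, A_2^{(i)}$ written in the statement, rather than into some other unit multiple, and that the standing coprimality hypotheses guarantee that each inverse appearing above is well defined. There is no cancellation or analytic content: the identity is a formal consequence of the Chinese Remainder Theorem applied to the various moduli, exactly in the spirit of the splitting already used in the proof of Lemma \ref{lemma:ShatEvaluationPartial}.
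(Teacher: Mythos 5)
Your proposal is correct and mirrors the paper's proof: both split the additive characters of composite modulus via the CRT identity $e_{uv}(a)=e_u(a\overline{v})e_v(a\overline{u})$, factor the $t_1,t_2$-sum and its congruence constraint into their $(1)$- and $(2)$-parts, and then perform exactly the unit substitutions $t_j^{(i)}\mapsto m^{(3-i)}\overline{(g_{3-j}')^{(3-i)}}\,t_j^{(i)}$ to absorb the cross-factors into $A_1^{(i)},A_2^{(i)}$. The only difference is that you record the preliminary factorization facts ($g_0=g_0^{(1)}g_0^{(2)}$, $g_j'=(g_j')^{(1)}(g_j')^{(2)}$, etc.) explicitly, which the paper leaves implicit.
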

 
\begin{proof}
 This follows from the Chinese remainder theorem, though some points are worth discussing.  
 For
the additive character aspect, we have that the $(1)$-part of $e_{g_1 m k}(\overline{x} g_2' A_1)$ equals
 \begin{equation*}
 e_{g_1^{(1)}  m^{(1)} k^{(1)}} 
  (\overline{x} g_2' \overline{g_1^{(2)} m^{(2)} k^{(2)}}  A_1)
  =
  e_{g_1^{(1)}  m^{(1)} k^{(1)}} 
  (\overline{x} (g_2')^{(1)} \cdot \underbrace{(g_2')^{(2)} \overline{g_1^{(2)} m^{(2)} k^{(2)}}   A_1}_{\text{$A_1^{(1)}$}}).
 \end{equation*}
The other parts of these additive character components are similar.

 The sum over $t_1$ and $t_2$ modulo $g_1$ and $g_2$  factors by the CRT. Let $g_0^{(1)} = \frac{g_1^{(1)} g_2^{(1)}}{(g_1^{(1)}, g_2^{(1)})}$, and let $$F^{(1)}(t_1,t_2) =(g_2')^{(1)} (g_2')^{(2)} t_1 + (g_1')^{(1)} (g_1')^{(2)} t_2 + m^{(1)} m^{(2)}.$$ Then, the $(1)$-part of the sum takes the form
 \begin{equation*}
  \mathop{
 \sumstar_{t_1 \shortmod{g_1^{(1)}}} \thinspace  \sumstar_{t_2 \shortmod{g_2^{(1)}}}}_{F^{(1)}(t_1,t_2) \equiv 0 \shortmod{g_0^{(1)}}} 
 e_{g_1^{(1)}}(\overline{xg_1^{(2)}t_1 k^{(1)} k^{(2)}} A_1)
 e_{g_2^{(1)}}(x\overline{g_2^{(2)}t_2 k^{(1)} k^{(2)}} A_2).
 \end{equation*}
Changing variables $t_1 \rightarrow \overline{(g_2')^{(2)}} m^{(2)} t_1$ and $t_2 \rightarrow \overline{(g_1')^{(2)}} m^{(2)} t_2$ shows that the sum has the claimed format, which completes the proof.
 \end{proof}

Next we change basis to multiplicative functions.  With $q = q^{(1)} q^{(2)}$ as in Lemma \ref{lemma:GtwistedMultiplicative}, we have that $G(x, \cdot)$ is a periodic function in $x$ modulo $q$, with $\gcd(x,q) = 1$.  Then by multiplicative Fourier inversion, we have
\begin{equation}
\label{eq:GtoGhat}
 G(x, \cdot) = \sum_{\chi \shortmod{q}} \widehat{G}(\chi, \cdot) \chi(x),
\end{equation}
where
\begin{equation}
\label{eq:Ghatdef}
 \widehat{G}(\chi, \cdot)
 = \widehat{G}(\chi, g_1, g_2, k, m, A_1, A_2)
 =
 \frac{1}{\varphi(q)} 
 \sum_{y \shortmod{q}} G(y, g_1, g_2, k, m, A_1, A_2) \overline{\chi}(y).
\end{equation}

Then we define
\begin{equation}
 H(\chi, g_1, g_2, k, m)
 = \max_{A_i} |\widehat{G}(\chi, g_1, g_2, k, m, A_1, A_2)|,
\end{equation}
where the maximum is over integers $A_i$, coprime to $g_i km$.  Now the twisted multiplicativity of $G$ from Lemma \ref{lemma:GtwistedMultiplicative} implies that $H$ is multiplicative.  That is, if we write $\chi = \chi^{(1)} \chi^{(2)}$ with $\chi^{(i)}$ modulo $q^{(i)}$ then
\begin{equation*}
H(\chi, g_1, g_2, k, m) =
\prod_{i=1}^{2} H(\chi^{(i)}, g_1^{(i)}, g_2^{(i)}, k^{(i)}, m^{(i)}).
\end{equation*}
Hence to bound $H$, it suffices to consider one prime at a time.

We record some assumptions that we may impose in our analysis: $(g_1g_2, k) = 1$, and $g_1 \sim g_2$. 
\begin{mylemma}
\label{lemma:Hbound}
Let notation and conditions be as above.
Suppose $p$ is a prime and write the variables additively, replacing $g_1$ by $p^{\gamma_1}$, etc.  Write $e(\chi)$ for the conductor exponent of $\chi$.
\begin{enumerate}
\item Suppose that $\gamma_1 \neq \gamma_2 > 0$.  Then $H(\chi, p^{\gamma_1}, p^{\gamma_2}, 1, p^\mu) = 0$ unless $\mu=0$ and $\chi$ is trivial, in which case $H(\chi_0, p^{\gamma_1}, p^{\gamma_2}, 1, 1) \leq 2 p^{\min(\gamma_1, \gamma_2)/2}$.
\item Suppose that $\gamma_1 = \gamma_2 = \gamma > 0$.  Then $H(\chi, p^{\gamma}, p^{\gamma}, 1, p^\mu) = 0$ unless $e(\chi) \leq \mu + 1$.  
\begin{itemize}
    \item [(a)] Additionally  if $e(\chi) \leq \mu $, then
$H(\chi, p^\gamma, p^\gamma, 1, p^\mu) \leq 4p^{\frac{\gamma-\mu}{2}}.$ 
    \item [(b)] If $e(\chi)=\mu+1$, and $\gamma > 1$, then $H(\chi, p^{\gamma}, p^{\gamma}, 1, p^\mu) \leq 4\frac{p^{\frac{\gamma-\mu}{2}}}{p-1}$.   
\end{itemize}
 \item  If $\gamma_1 = \gamma_2 = 1$, and $e(\chi)=\mu+1$, we have $H(\chi, p, p, 1, p^\mu) \leq 2 \frac{p^{\frac12}}{p-1}$.
\item In all the remaining cases we have $\gamma_1=\gamma_2=0$ and
    $H(\chi, 1, 1, p^{\kappa}, p^\mu) \ll p^{-1/2}$.
\end{enumerate}
\end{mylemma}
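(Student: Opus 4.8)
The plan is a direct, prime-by-prime computation with the character sum $G$ of \eqref{eq:GA1A2def}. By the multiplicativity of $H$ already established through Lemma~\ref{lemma:GtwistedMultiplicative}, it suffices to fix a prime $p$ and write $g_1=p^{\gamma_1}$, $g_2=p^{\gamma_2}$, $k=p^{\kappa}$, $m=p^{\mu}$. Since $g_1\sim g_2$ one has $\gamma_1=0\iff\gamma_2=0$, and the standing assumption $(g_1g_2,k)=1$ forces $\kappa=0$ whenever $\gamma_1+\gamma_2>0$; thus there are two regimes, $\gamma_1=\gamma_2=0$ (item~(4)) and $\gamma_1,\gamma_2\ge1$, $\kappa=0$ (items~(1)--(3)). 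In item~(4) the $t_1,t_2$-sum in \eqref{eq:GA1A2def} is empty, so $G(x,1,1,p^{\kappa},p^{\mu},A_1,A_2)=e_{p^{\kappa+\mu}}(A_2x+A_1\overline{x})$ and hence $\widehat{G}(\chi,\cdot)=S_{\chi}(A_2,A_1;p^{\kappa+\mu})/\varphi(p^{\kappa+\mu})$; as $A_1,A_2$ are coprime to $p^{\kappa+\mu}$, even the weakest bound in Lemma~\ref{lemma:WeilTwisted}, $|S_{\chi}|\le2p^{\kappa+\mu-1/2}$, gives $|\widehat{G}(\chi,\cdot)|\le2p^{1/2}/(p-1)\ll p^{-1/2}$, which is~(4).

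For item~(1) I may assume $\gamma_1>\gamma_2$ (the opposite case is symmetric). Then $g_1'=p^{\gamma_1-\gamma_2}$, $g_2'=1$, $g_0=p^{\gamma_1}$, and the congruence in \eqref{eq:GA1A2def} forces $t_1\equiv-p^{\mu}-p^{\gamma_1-\gamma_2}t_2\pmod{p^{\gamma_1}}$, which cannot be coprime to $p$ unless $\mu=0$; hence $G\equiv0$ and $H=0$ when $\mu\ge1$. When $\mu=0$, pull the two prefactors of \eqref{eq:GA1A2def} inside the $t_2$-sum and use the identities $1+\overline{t_1}=p^{\gamma_1-\gamma_2}s(t_2)$ and $\overline{t_2}+p^{\gamma_1-\gamma_2}=\overline{s(t_2)}$, where $s(t_2)=t_2(1+p^{\gamma_1-\gamma_2}t_2)^{-1}$. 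The substitution $u=s(t_2)$, a bijection of $(\mathbb{Z}/p^{\gamma_2}\mathbb{Z})^{\times}$ onto itself, then collapses everything: $G(x,\cdot)=\sum_{u}^{*}e_{p^{\gamma_2}}(A_1\overline{x}\,u+A_2x\,\overline{u})=S(1,A_1A_2;p^{\gamma_2})$, independent of $x$. Therefore $\widehat{G}(\chi,\cdot)$ vanishes unless $\chi$ is trivial, and $|\widehat{G}(\chi_0,\cdot)|=|S(1,A_1A_2;p^{\gamma_2})|\le2p^{\gamma_2/2}=2p^{\min(\gamma_1,\gamma_2)/2}$ by Weil's bound; this is~(1).

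For items~(2) and~(3), $\gamma_1=\gamma_2=\gamma\ge1$, so $g_1'=g_2'=1$, $g_0=p^{\gamma}$, and the congruence reads $t_2\equiv-p^{\mu}-t_1\pmod{p^{\gamma}}$. Pulling the prefactors inside gives $G(x,\cdot)=\sum_{t_1}^{*}e_{p^{\gamma+\mu}}(B_1\overline{x}+B_2x)$ with $B_i=A_i(1+p^{\mu}\overline{t_i})$, and a short computation with the congruence yields the key identity $B_1B_2\equiv A_1A_2\pmod{p^{\gamma+\mu}}$. Writing $r=1+p^{\mu}\overline{t_1}$, which runs bijectively over the set $R$ of residues coprime to $p$ with $r\equiv1\pmod{p^{\mu}}$ but $r\not\equiv1\pmod{p^{\mu+1}}$, we get $G(x,\cdot)=\sum_{r\in R}e_{p^{\gamma+\mu}}(A_1r\overline{x}+A_2\overline{r}x)$. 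Since $G$ depends on $x$ only modulo $p^{\gamma+\mu}$, $\widehat{G}(\chi,\cdot)$ vanishes unless $e(\chi)\le\gamma+\mu$, and the substitution $x\mapsto rx$ then separates the sum as
\[
\widehat{G}(\chi,\cdot)=\frac{S_{\chi}(A_2,A_1;p^{\gamma+\mu})}{\varphi(p^{\gamma+\mu})}\sum_{r\in R}\overline{\chi}(r),
\]
where by orthogonality $\sum_{r\in R}\overline{\chi}(r)$ has absolute value $\le\varphi(p^{\gamma})$ when $e(\chi)\le\mu$, equals $-p^{\gamma-1}$ when $e(\chi)=\mu+1$, and equals $0$ when $e(\chi)>\mu+1$ (the last giving the vanishing statement in~(2)). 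Feeding in Lemma~\ref{lemma:WeilTwisted} finishes the proof: if $e(\chi)\le\mu$, or $e(\chi)=\mu+1$ and $\gamma\ge2$, then $\cond(\chi)\le p^{\gamma+\mu-1}$ so $|S_{\chi}|\ll p^{(\gamma+\mu)/2}$, yielding~(2a) and~(2b) after tracking the normalising powers of $p$; if $\gamma=1$ and $e(\chi)=\mu+1$ then $\chi$ is primitive modulo $p^{\mu+1}$ and the weaker bound $|S_{\chi}|\le2p^{\mu+1/2}$ (or the trivial bound $|S_{\chi}|\le2^{\mu+1}$ when $p=2$) yields~(3).

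I expect the only genuinely delicate points to be items~(2b) and~(3): there $\chi$ is ramified all the way to, or (when $\gamma=1$) beyond, the threshold at which $S_{\chi}(A_2,A_1;p^{\gamma+\mu})$ stops exhibiting square-root cancellation, so the entire gain of $\tfrac{1}{p-1}$ must be extracted from the orthogonality sum $\sum_{r\in R}\overline{\chi}(r)=-p^{\gamma-1}$; carrying the exact powers of $p$ through the normalisations, respecting the conductor thresholds in Lemma~\ref{lemma:WeilTwisted}, and checking the $p=2$ edge cases is where care is required. The other step needing care is the verification of the two algebraic collapses --- the substitution $u=s(t_2)$ in item~(1) and the identity $B_1B_2\equiv A_1A_2$ in items~(2)--(3) --- since these are what allow the auxiliary variables $t_1,t_2$ to be eliminated in closed form.
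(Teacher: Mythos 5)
Your proof is correct and takes essentially the same route as the paper's: both reduce to a prime-by-prime computation of $\widehat{G}$ via the congruence in \eqref{eq:GA1A2def}, collapse the $t$-variables using the same algebraic identity (your $B_1B_2\equiv A_1A_2$ is exactly the paper's $(1+mk\overline{t_1k})(g_1'+mk\overline{t_2k})\equiv g_1'$, and your substitution $u=s(t_2)$ plays the role of the paper's change of variable $y\to y(g_1'+mk\overline{kt_2})^{-1}$), and then feed the resulting (twisted) Kloosterman sum into Lemma~\ref{lemma:WeilTwisted} with the same case split on $e(\chi)$ versus $\mu$. The only difference is cosmetic --- you pull the prefactors into the $t_1$-sum and then separate by shifting $x\mapsto rx$, whereas the paper separates first by shifting $y$ --- and your coefficient tracking (the ratios $\varphi(p^\gamma)/\varphi(p^{\gamma+\mu})=p^{-\mu}$ and $p^{\gamma-1}/\varphi(p^{\gamma+\mu})=p^{-\mu}/(p-1)$) reproduces the paper's constants exactly.
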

\begin{proof}
For convenience we work with multiplicative notation, but keeping the assumption that all variables are powers of $p$.  
We have
by the definitions \eqref{eq:Ghatdef} and 
\eqref{eq:GA1A2def} that $\widehat{G}(\chi, g_1, g_2, k, m, A_1, A_2)$ equals
\begin{equation}
\label{eq:GhatWrittenOut}
\frac{1}{\varphi(q)} 
 \mathop{
 \sumstar_{t_1 \shortmod{g_1}} \thinspace  \sumstar_{t_2 \shortmod{g_2}}}_{g_2' t_1 + g_1' t_2 + m \equiv 0 \shortmod{g_0}}
  \sum_{y \shortmod{q}} \overline{\chi}(y)
 e_{g_1 m k}(  \overline{y} A_1 (g_2' + mk \overline{t_1 k}))
 e_{g_2 m k}(y  A_2 (g_1' + mk \overline{t_2 k}))).
\end{equation}
In the above $\overline{k}$ is the multiplicative inverse of $k$ modulo $g_1 g_2$.

By symmetry, say $g_2 | g_1$, so that $g_2' = 1$, $g_1' = \frac{g_1}{g_2}$, and $g_0 = g_1$. 
The congruence in the sum implies $g_1' t_2 + m$ is coprime to $p$.  Hence if $p|g_1'$ then $p \nmid m$, and similarly if $p|m$ then $p \nmid g_1'$ (that is, $g_1' = 1$).  Therefore, we may assume $(g_1' + mk \overline{k t_2}, p) = 1$.
We will then change variables $y \rightarrow y (g_1'+mk \overline{k} \overline{t_2})^{-1}$.  To aid in the resulting simplification, note that, using the congruence $t_1 + g_1' t_2 + m \equiv 0 \pmod{g_1}$ implies
\begin{equation*}
    (1+mk \overline{t_1 k})(g_1' + mk \overline{t_2 k})
    \equiv g_1' + mk \overline{k t_1 t_2} (t_1 + g_1' t_2 + mk \overline{k}) 
    \equiv g_1' \pmod{g_1 mk}.
\end{equation*}
Hence \eqref{eq:GhatWrittenOut} simplifies as
\begin{equation}
\label{eq:GhatWrittenOut2}
\frac{1}{\varphi(g_1 mk)} 
 \mathop{
 \sumstar_{t_1 \shortmod{g_1}} \thinspace  \sumstar_{t_2 \shortmod{g_2}}}_{ t_1 + g_1' t_2 + m \equiv 0 \shortmod{g_0}}
 \chi(g_1'+mk \overline{k t_2})
  \sum_{y \shortmod{g_1 mk}} \overline{\chi}(y) 
 e_{g_1 m k}(  \overline{y} A_1 g_1')
 e_{g_2 m k}(y  A_2).
\end{equation}

 {\bf Case (1)}. In this case, $\kappa=0$ ($k=1$) and $p|g_1'$.     
The congruence in \eqref{eq:GhatWrittenOut2} implies that the sum vanishes unless $\mu = 0$.
Hence \eqref{eq:GhatWrittenOut2} simplifies as
\begin{equation*}
\Big(
\mathop{\sumstar_{t_1 \shortmod{g_1}} \thinspace  \sumstar_{t_2 \shortmod{g_2}}}_{  t_1 + g_1' t_2 + 1 \equiv 0 \shortmod{g_1}}
\chi(g_1' +\overline{ t_2})
\Big)
\Big( 
 \frac{1}{\varphi(g_1)} 
 \sum_{y \shortmod{g_1}} \overline{\chi}(y)
 e_{g_1}(\overline{y} A_1 g_1')
 e_{g_2}(y A_2)\Big).
\end{equation*}
Note the factorization on display.  For each $t_2$, the congruence uniquely determines $t_1$.  Hence the sum over $t_1$ and $t_2$ in parentheses simplifies as
\begin{equation*}
\sumstar_{t_2 \shortmod{g_2}} \chi(g_1' + t_2) 
= \sum_{t_2 \shortmod{g_2}} \chi(g_1' + t_2)
= \sum_{t_2 \shortmod{g_2}} \chi(t_2),
\end{equation*}
where we used that $p | g_1'$ to relax the condition that $t_2$ is coprime to $g_2$. Now the sum over $t_2$ vanishes unless $\chi$ is trivial.  Next observe $g_1'/g_1 = 1/g_2$, so the sum over $y$ is seen as a Kloosterman sum repeated $g_1/g_2$ times, giving
\begin{equation*}
\widehat{G}(\chi_0, g_1, g_2, 1, 1, A_1, A_2)
= S(A_1, A_2, g_2).
\end{equation*} 
The Weil bound finishes this case.  

{\bf Case (2)}.  Here $g_1' = g_2' = 1$, and again $k =1$.  
Then
\eqref{eq:GhatWrittenOut2} 
simplifies as
\begin{equation*}
\Big(
\mathop{
 \sumstar_{t_1 \shortmod{g}} \thinspace  \sumstar_{t_2 \shortmod{g}}}_{ t_1 +  t_2 + m \equiv 0 \shortmod{g}}
\chi(1 + m \overline{t_1})
\Big)
\Big( 
 \frac{1}{\varphi(gm)}
\sum_{y \shortmod{gm}} \overline{\chi}(y) 
e_{g m}(  \overline{y}  A_1 )
 e_{g m}(y A_2) \Big).
\end{equation*}
 Given any $t_1$ coprime to $g$, there is a unique $t_2$ (coprime to $g$) solving the congruence.  Hence the sum over $t_1$ and $t_2$ simplifies as
 \begin{equation*}
 \sumstar_{t_1 \shortmod{g}} \chi(1 + m \overline{t_1})
 = \sumstar_{t_1 \shortmod{g}} \chi(1+ m t_1).
 \end{equation*}
Now we switch temporarily to additive notation.  This sum over $t_1$ becomes
\begin{equation}
\label{eq:charactersumsimp}
\sum_{t \shortmod{p^{\gamma}}} \chi(1 + p^{\mu} t)
- \sum_{t \shortmod{p^{\gamma-1}}} \chi(1 + p^{\mu+1} t).
\end{equation}
If $\mu>0$, \eqref{eq:charactersumsimp} simplifies as $p^{\gamma} \delta_{e(\chi) \leq \mu}
 - p^{\gamma-1} \delta_{e(\chi) \leq \mu+1}$. 
If $\mu=0$, then instead \eqref{eq:charactersumsimp} evaluates to $(p^{\gamma}-p^{\gamma-1}) \delta_{e(\chi)=0}  - p^{\gamma-1} \delta_{e(\chi) \leq 1}.$
Hence if $e(\chi) > \mu + 1$ we have $H(\chi, p^{\gamma}, p^{\gamma}, 1, p^\mu) = 0$. 
Overall, the sum in \eqref{eq:charactersumsimp} is bounded by
\begin{equation*}
\begin{cases}
0, \qquad & e(\chi) > \mu+1 \\
p^{\gamma-1}, \qquad & e(\chi) = \mu+1 \\
\varphi(p^{\gamma}), \qquad & e(\chi) \leq \mu.
\end{cases}
\end{equation*}

Case 2(a).  If $e(\chi) \leq \mu$,
\begin{equation*}
    |\widehat{G}(\chi, \cdot)| \leq p^{-\mu} |S_{\overline{\chi}}(A_1, A_2;p^{\gamma+\mu})|.
\end{equation*}
Note that as $\gamma >0$ and $e(\chi) \leq \mu$, we have $\mathrm{cond}(\chi) \leq p^{\gamma+\mu-1}$. Now, if $\gamma+\mu \geq 2$, we can use Lemma \ref{lemma:WeilTwisted} (2) and (4) to get the required bound. Otherwise we must have $\gamma=1, \mu =0$, in which case the bound in Lemma \ref{lemma:WeilTwisted} (1) suffices. This proves the case 2(a).

If $e(\chi) = \mu+1$ ,  
\begin{equation}
    \label{eq:echimuplus1}
    |\widehat{G}(\chi, \cdot)| \leq \frac{p^{-\mu}}{(p-1) } |S_{\overline{\chi}}(A_1, A_2;p^{\gamma+\mu})|.
\end{equation}
Case 2(b).
Once again we use Lemma \ref{lemma:WeilTwisted} (2) and (4) to get the required bounds on \eqref{eq:echimuplus1} (when $\gamma > 1$).

{\bf Case (3)}. We use Lemma \ref{lemma:WeilTwisted} (3) to bound \eqref{eq:echimuplus1} when $\gamma=1$.

{\bf Case (4)}.  We have 
directly from the definition \eqref{eq:GhatWrittenOut} that
\begin{equation*}
\widehat{G}(\chi, \cdot)
= \frac{p^{-\mu-\kappa}}{1-p^{-1}} 
S_{\overline{\chi}}(A_1, A_2 ; p^{\mu+\kappa}).
\end{equation*}
By Lemma \ref{lemma:WeilTwisted} (3), the proof is complete.
\end{proof}

\begin{myremark}
    We note that for the proof of Lemma \ref{lemma:Hbound}, we used square root cancellation of the twisted Kloosterman sum in cases $1, 2(a), 2(b)$, and a (possibly smaller) saving of $\sqrt{p}$ in cases $(3)$ and $(4)$. This distinction will be used to split primes in the next section.
\end{myremark}

\subsection{Final steps}
We pick up from \eqref{eq:N1+ViaG}
and apply \eqref{eq:GtoGhat}, giving
\begin{multline*}
\mathcal{N}_1^{+}(N', Q, k, Y)
\ll
\max_{ \|\beta\| =1}  
\max_{\substack{1 \ll U\ll Y}}
\Big|
\frac{N'}{\sqrt{U}}
\sum_{g_1 \sim g_2}
\sum_{\substack{(b_1', b_2') = 1 \\ (b_1' b_2', g_1 g_2) = 1\\(b_1'b_2'g_1g_2,k)=1)}}
\sum_{m > 0} 
\int_{y_1, y_2} \beta(b_1' g_1,  y_1)
\overline{\beta(b_2' g_2,  y_2)} 
\\
\sum_{\chi \shortmod{g_1 g_2 m k}}
\widehat{G}(\chi, g_1, g_2, k, m)
\Big(\frac{g_1 b_1' g_2 b_2' (y_1 -y_2)}{2 \pi e (g_1, g_2) m  } \Big)^{i(y_1 - y_2)} 
\chi(b_1' \overline{b_2'})
\omega_U(y_1 - y_2)
w
(\cdot)
 dy_1 dy_2 \Big|.
\end{multline*}
Now we apply M\"{o}bius inversion to remove the condition $(b_1', b_2') = 1$, rearrange, 
and use $|\widehat{G}(\chi, \cdot)| \leq H(\chi, \cdot)$,
giving
\begin{multline}
 \label{eq:N1plusBilinearBound}
\mathcal{N}_1^{+}(N', Q, k, Y)
\ll
\max_{ \|\beta\| =1}  
\max_{\substack{1 \ll U\ll Y}}
\frac{N'}{\sqrt{U}}
\sum_{g_1 \sim g_2}
\sum_{(a,g_1g_2k)=1}
\\
\times \int_{y_1, y_2} \omega_U(y_1 -y_2) 
\sum_{m > 0} 
\sum_{\chi \shortmod{g_1 g_2 m k}} 
H(\chi, g_1, g_2, k, m)
\cdot 
| \mathcal{C}_1| \cdot | \mathcal{C}_2| dy_1 dy_2,
\end{multline}
where
\begin{equation*}
\mathcal{C}_1
= \sum_{\substack{ (b_1', g_1 g_2 k) = 1}}
\beta(a b_1' g_1,  y_1) (b_1')^{i(y_1 -y_2)} \chi(b_1'),
\end{equation*}
and
\begin{equation*}
\mathcal{C}_2
= \sum_{\substack{ (b_2', g_1 g_2 k) = 1}}
\beta(a b_2' g_2,  y_2) (b_2')^{i(y_1 -y_2)} \overline{\chi}(b_2').
\end{equation*}
In order to simplify this expression, we need bounds on $H$, which are recorded locally (prime-by-prime) in Lemma \ref{lemma:Hbound}.  We enumerate the types of primes as follows:
\begin{enumerate}[label=\Roman*.]
\item \begin{itemize}
        \item[(a)]  $\gamma_1 \neq \gamma_2 \geq 1$, (hence necessarily $\mu=0, \ \chi = \chi_0$),
        \item[(b)]  $\gamma_1 = \gamma_2 = \gamma \geq 1, \ e(\chi) \leq \mu$,
    \end{itemize} 
\item $\gamma_1 = \gamma_2 = \gamma > 1, \ e(\chi) = \mu+1$,
\item \begin{itemize}
        \item[(a)]  $\gamma_1 = \gamma_2 = 1, \ e(\chi) = \mu+1$,
        \item[(b)] $\gamma_1=\gamma_2=0, \ \kappa \geq 0, \ \mu\geq 0$.
    \end{itemize}
\end{enumerate}
Lemma \ref{lemma:Hbound} implies that every prime may be assumed to fall into Type I, II or III. For example, a case not appearing in the list is $\gamma_1=\gamma_2\geq2$, $\mu=0$, $e(\chi) \geq 2$, but this case has $H=0$. Note that Type I and II correspond to cases (1) and (2) in Lemma \ref{lemma:Hbound}.

Next we factor each of the variables $g_1$, $g_2$, and $m$ according to the types of primes.  That is, we write $g_1 = \prod_{i=1}^3 g_{1,i}$, 
$g_2= \prod_{i=1}^{3} g_{2,i}$, 
and $m = \prod_{i=1}^3 m_i$, where the $i$-th factor corresponds to the product of primes of type $i$ from the above list. Observe $k$ would equal $k_3$ so there was no need to factor it.
We correspondingly factor $\chi = \prod_{i=1}^3 \chi_i$.  Next we record information about $H$ in these cases, which is simply translating the bounds from Lemma \ref{lemma:Hbound} into the above (global) notation.
Below we use the definition $\mathrm{rad}(x)=\displaystyle\prod_{\substack{p|x \\ \text{prime}}}p$.
\begin{enumerate}[label=\Roman*.]
\item $\cond(\chi_1) \mid m_1$, and
$H(\chi_1, g_{1,1}, g_{2,1}, 1, m_1) \ll \sqrt{\frac{(g_{1,1}, g_{2,1})}{m_1}} ((g_{1,1}, g_{2,1})m_1)^{\varepsilon}$.
\item $g_{1,2} = g_{2,2}$ squarefull, $\cond(\chi_2) = m_2 \cdot \mathrm{rad}(g_{1,2})$, and 
$$H(\chi_2, g_{1,2}, g_{1,2}, 1, m_2) \ll \frac{1}{\mathrm{rad}(g_{1,2})}\sqrt{\frac{g_{1,2}}{m_{2}}} (g_{1,2} m_2)^{\varepsilon}.$$
\item $g_{1,3} = g_{2,3} =: g_3$ squarefree, $\cond(\chi_3) \mid m_3 g_3 k$, $(g_3,k)=1$ and
 $$H(\chi_3, g_3,g_3,k,m_3) \ll \mathrm{rad}(m_3 g_3 k)^{-1/2 + \varepsilon}. $$
\end{enumerate}

We return to \eqref{eq:N1plusBilinearBound}, with these factorizations. 
By the AM-GM inequality, we have
\begin{multline*}
    \mathcal{N}_1^{+}(N', Q, k, Y) \ll \max_{ \|\beta\| =1}  
\max_{\substack{1 \ll U\ll Y}} \frac{N'}{\sqrt{U}}
\sum_{g_1 \sim g_2}
\sum_{(a,g_1g_2k)=1}
\\
\times 
\int_{y_1, y_2} \omega_U(y_1 -y_2) 
\sum_{m > 0} 
\sum_{\chi \shortmod{g_1 g_2 m k}} 
\sum_{\text{(factorizations)}} 
H(\chi, g_1, g_2, k, m)
\cdot 
| \mathcal{C}_1|^2 dy_1 dy_2.
\end{multline*}
A priori, the term $|\mathcal{C}_1|^2$ should be replaced by $|\mathcal{C}_1|^2+|\mathcal{C}_2|^2$, but the symmetry between $\mathcal{C}_1$ and $\mathcal{C}_2$ shows that their contributions are the same.

As a small simplification, we argue that it suffices to estimate the terms with $g_{1,2} = g_{2,2} = m_2 = 1$, i.e., where there are no primes of type II. At such a prime, the size of $H$ is $p^{-1}$ times its size at a prime of type I, so there is a relative savings by this factor of $p$. The set of characters of conductor $p^{\mu + 1}$ may be covered by $p$ cosets of the subgroup of characters of modulus $p^{\mu}$.  We can take this sum over coset representatives to the outside, and absorb them into the definition of the coefficient $\beta$.  The net result is that the bound we get from primes of type II is no worse than the bound from primes of type I. 

It is very awkward to write all the conditions that define the types of primes, so we will not write them inside the summation signs, and instead keep it implicit that the subscripts on the variables indicate the types of primes with which they correspond.  

We observe that with the above factorizations, we have
\begin{equation*}
H(\chi, g_1, g_2, k, m)
\ll 
(Qk)^{\varepsilon}
\sqrt{\frac{(g_{1,1}, g_{2,1})}
	{m_1 \mathrm{rad}(m_3 g_3 k)}}
 =  
(Qk)^{\varepsilon}
\sqrt{\frac{(g_{1}, g_{2})}
	{m_1 g_3 \mathrm{rad}(m_3 g_3 k)}}.
\end{equation*}
Applying these bounds and simplifications, and changing variables $y_2 \rightarrow y_1 + v$, we have
\begin{multline*}
    \mathcal{N}_1^{+}(N', Q, k, Y) \ll (QkY)^{\varepsilon}\max_{ \|\beta\| =1} \max_{\substack{1 \ll U\ll Y}}
    \frac{N'}{\sqrt{U}}
\sum_{g_1 \sim g_2}
\sum_{(a,g_1g_2k)=1}\int_{y_1} 
\sum_{m_1 m_3 \asymp \frac{N''}{(g_1, g_2)}} 
\\
\sum_{\substack{\chi_1\chi_3 \shortmod{m_1 m_3  g_3  k }}} 
\sqrt{\frac{(g_{1}, g_{2})}
	{m_1 g_3 \mathrm{rad}(g_3 m_3 k)}}
\int_{\substack{|v|\ll Y^\varepsilon \\ \text{ or }
|v| \asymp U}}
\Big|
\sum_{\substack{ (b_1', g_1 g_2) = 1}}
\beta(a b_1' g_1,  y_1) (b_1')^{-iv} \chi(b_1') \Big|^2 
 dv dy_1.
\end{multline*}
By the large sieve (Lemma \ref{lem:largesieve}) with $d=m_1 m_3 g_3 k$ we obtain
\begin{multline*}
    \mathcal{N}_1^{+}(N', Q, k, Y) \ll (QkY)^{\varepsilon}\max_{ \|\beta\| =1} \max_{\substack{1 \ll U\ll Y}}
    \frac{N'}{\sqrt{U}}
\sum_{g_1 \sim g_2}
\sum_{(a,g_1g_2k)=1}\int_{y_1} 
\sum_{m_1 m_3 \asymp \frac{N''}{(g_1, g_2)}} 
\\
\sqrt{\frac{(g_{1}, g_{2})}
	{m_1 g_3 \mathrm{rad}(g_3 m_3 k)}}
\Big(m_1 m_3 g_3 k U 
+ \frac{Q}{a g_1}
\Big)
\sum_{b_1'} |\beta(a b_1' g_1, y_1)|^2 
 dy_1.
\end{multline*}
Estimating the $m_1,m_3$-sum using Lemma \ref{lemma:radicalsum}, we obtain
\begin{multline*}
    \mathcal{N}_1^{+}(N', Q, k, Y) \ll (QkY)^{\varepsilon}
   \max_{\substack{1 \ll U\ll Y}}
    \frac{N'}{\sqrt{U}}
\sum_{g_1 \sim g_2}
\sum_{(a,g_1g_2k)=1}
\\
\mathrm{rad}(k)^{-1/2}\Big(kU(N'')^{3/2} 
+ \frac{Q\sqrt{N''}}{a}
\Big)
\int_{y_1} 
\sum_{b_1'} |\beta(a b_1' g_1, y_1)|^2 
 dy_1.
\end{multline*}
Finally we turn to $g_2$. Given $g_1$, the number of $g_2$ is $O(Q^{\varepsilon})$, uniformly in $g_1$.  We can then absorb the sums over $g_1$ and $a$, as well as the integral over $y_1$, into $\beta$, which is assumed to have norm $1$.  Hence
\begin{equation*}
    \mathcal{N}_1^{+}(N', Q, k, Y) \ll (QkY)^{\varepsilon} \max_{U\ll Y} \frac{N'}{\sqrt{\mathrm{rad}(k)U}} (kU (N'')^{3/2} + Q \sqrt{N''}).
\end{equation*}
Inserting $N''=\frac{Q^2}{N'}(Y^{\varepsilon}+U)$, we conclude that
\begin{equation*}
    \mathcal{N}_1^{+}(N', Q, k, Y) \ll T^{\varepsilon} \left(\frac{kQ^3Y^2}{\sqrt{\mathrm{rad}(k)N'}} + Q^2 \sqrt{\frac{N'}{\mathrm{rad}(k)}}\right).
\end{equation*}
The same bound holds for $\mathcal{N}_1^{-}(N', Q, k, Y)$. 
Using Lemma \ref{lemma:Diagonal}, we then have 
\begin{equation}
\label{eq:N1boundFinal}
    \mathcal{N}_1(N', Q, k, Y)\ll T^{\varepsilon} \left(QN'+\frac{kQ^3Y^2}{\sqrt{\mathrm{rad}(k)N'}} + Q^2 \sqrt{\frac{N'}{\mathrm{rad}(k)}}\right),
\end{equation}
Applying \eqref{eq:N1boundFinal} into \eqref{eq:N2normdef} leads to the same bound for $\mathcal{N}$ as holds for $\mathcal{N}_1$.
The same analysis and bound (with $Y=1$) hold for $\mathcal{N}(N',Q,k)$ and 
thus concludes the proof of Theorem \ref{thm:dualnormbound}.

\section{Proof of Theorem \ref{thm:shiftedSum}}\label{sect:ShiftedSum}

While the proof of Theorem \ref{thm:shiftedSum} almost follows directly from the proof of Theorem \ref{thm:mainthm}, there are some technical differences. In this section, we highlight the adjustments needed to complete the proof of Theorem \ref{thm:shiftedSum}.
Let
\begin{align}
    Z=\sum_{n,k}\lambda_f(n)\lambda_f(n+k)W\left(\frac{n}{N},\frac{k}{H}\right).
\end{align}
Applying the delta method to detect $m=n+k$ as in Section \ref{subsect:ApplyingDelta} with $H\leq C\leq N^{1/2-\varepsilon}$, we arrive at 
\begin{equation}
  Z= 
  \sum_{i=1}^{2}
  \frac{1}{C}
 \sum_{cd \leq C} \frac{1}{cd} F_i\left(\frac{cd}{C} \right)
 \thinspace \thinspace
 \sumstar_{h \shortmod{c}} \mathcal{S},
 \end{equation}
with
\begin{multline}
 \mathcal{S} = \mathcal{S}_i = 
 \sum_{m,n,k} \lambda_f(m) e_c(-hm) \lambda_f(n) e_c(hn) e_c(hk)
 \\
 \times 
 w_N(m) W\left(\frac{n}{N},\frac{k}{H}\right)
  U_i \left(
 \frac{n+k-m}{cdC}\right).
 \end{multline}
We again replace $U_i$ by $U$ as all the bounds will be independent of $i$.

We now proceed to apply dual summations to $m,n,k$ as in Section \ref{sect:DualSummations}, with the only difference being the weight function $W(n/N,k/H)$ behaves less nicely in terms of $k$ compared to $\widehat{\omega}(Tk/n)$. Indeed, the Poisson summation in $k$ yields 
\begin{multline*}
    \sum_ke_c(hk)W\left(\frac{n}{N},\frac{k}{H}\right)
  U \left(
 \frac{n+k-m}{cdC}\right)
 \\
 =\sum_{k'\equiv h\shortmod{c}}\intR W\left(\frac{n}{N},\frac{z}{H}\right)
  U \left(
 \frac{n-m+z}{cdC}\right)e\left(\frac{k'z}{c}\right)dz.
\end{multline*}
With the choice of $C\geq H$, repeated integration by parts 
shows the integral is very small
unless $|k'|\ll cN^\varepsilon/H$. With this deviation highlighted, we follow the rest of the proof of Proposition \ref{prop:SafterVoronoi} to arrive at\footnote{Except the last change of variable has to be done with $x=Nt^3$ instead of $x=\nu t$.} 
\begin{equation*}
    Z \ll N^{\varepsilon}
\max_{\substack{Q \ll C \\ \eta \in \{ \pm 1 \} }}
\sum_{d \ll C/Q}
\max_{\substack{N' \ll \frac{N^{2+\varepsilon}}{d^3 C^3} }}
HN\left(\frac{N'^2}{NQ^3}\right)^{2/3}  
\int_{t \asymp 1}   \sum_{c \asymp Q} 
 \sum_{|k'|\ll \frac{QN^\varepsilon}{H}} 
|\mathcal{A} |^2
 + O(N^{-100}),
\end{equation*}
where $w(\cdot)$ is $T^\varepsilon$-inert and
\begin{equation*}
\mathcal{A} = 
\mathop{\sum_{n_1 | c} \sum_{n_2}}_{n_1^2 n_2 \asymp N'} 
\frac{\lambda_f(n_1, n_2)}{n_1 n_2}
S(\eta   \overline{k'}, n_2 ;c/n_1) 
e_c\Big(3   (n_1^2 n_2 N )^{1/3} t  \Big)
w(\cdot).
\end{equation*}

We first treat the case $k'=0$. Since $k'\equiv h\quad\shortmod{c}$ and $(h,c)=1$, $k'=0$ forces $c=1$ (and hence $n_1 = 1$). Opening the square,
the $t$-integral evaluates as 
\begin{align*}
    \int_{t\asymp 1} \omega(t, \cdot)e(((m_2-n_2)N)^{1/3}t)dt=\hat{\omega}(((m_2-n_2)N)^{1/3},\cdot).
\end{align*}
This restricts $|m_2-n_2|\ll N'^{2/3+\varepsilon}/N^{1/3}$ up to a small error. Using the AM-GM inequality with Lemma \ref{lemma:RankinSelbergBound}, we see that the contribution of $k'=0$ to $Z$ is bounded by 
\begin{align*}
    \ll N^{\varepsilon}
\max_{\substack{Q \ll C \\ \eta \in \{ \pm 1 \} }}
\sum_{d \ll C/Q}
\max_{\substack{N' \ll \frac{N^2}{d^3 C^3} T^{\varepsilon}}}
HN\left(\frac{N'^2}{N}\right)^{2/3}\mathop{\sum\sum}_{\substack{m_2,n_2\asymp N'\\|m_2-n_2|\ll N'^{2/3+\varepsilon}/N^{1/3}}}\frac{|\lambda_f(1,n_2)|^2}{n_2^2}\ll \frac{HN^{2+\varepsilon}}{C^3}.
\end{align*}
By symmetry, we may assume $k'>0$.
This leads to 
\begin{equation*}
    Z \ll N^{\varepsilon}
\max_{\substack{Q \ll C \\ \eta \in \{ \pm 1 \} }}
\sum_{d \ll C/Q}
\max_{\substack{N' \ll \frac{N^{2+\varepsilon}}{d^3 C^3} }}
HN\left(\frac{N'^2}{NQ^3}\right)^{2/3}  
\int_{t \asymp 1}   \sum_{c \asymp Q} 
 \sum_{k'\ll \frac{QN^\varepsilon}{H}} 
|\mathcal{A} |^2
 + O\left(\frac{HN^{2+\varepsilon}}{C^3}\right).
\end{equation*}
Following the rest of Section \ref{sect:DualSummations} leads directly to
\begin{multline*}
Z \ll 
\frac{H N^{2+\varepsilon}}{C^3}
+
HN^\varepsilon
\max_{\eta_4 \in \{\pm 1\}}
\sum_{d}
\max_{\substack{Q \ll C/d \\ N' \ll \frac{N^{2+\varepsilon}}{d^3 C^3}}}
\sum_{k'\ll \frac{QN^\varepsilon}{H}}\Big(
\frac{1}{Q}
\mathcal{N}(N', Q, k', \Phi(N'))
\\
+\frac{1}{Q} \delta(\Phi(N') \ll N^\varepsilon)\mathcal{N}(N',Q,k')
\Big)
,
\end{multline*}
with $\Phi(N')=\frac{(N'N)^{1/3}}{Q}$. Applying Theorem \ref{thm:dualnormbound} 
yields 
\begin{multline*}
Z \ll 
\frac{H N^{2+\varepsilon}}{C^3}
+
HN^\varepsilon
\max_{\eta \in \{\pm 1\}}
\sum_d
\max_{\substack{Q \ll C/d \\ N' \ll \frac{N^{2+\varepsilon}}{d^3 C^3}}}\sum_{k'\ll\frac{QN^\varepsilon}{H}} \left(N' + \frac{k'}{\sqrt{\mathrm{rad}(k)}}\frac{ Q^2 \Phi^2}{\sqrt{N'}} + Q \sqrt{\frac{N'}{\mathrm{rad}(k)}}\right)\\
\ll N^\varepsilon\left(\frac{N^2}{C^2}+\frac{CN}{\sqrt{H}}+\sqrt{H}N\right),
\end{multline*}
which holds for any $H\leq C\leq N^{1/2-\varepsilon}$.  We will choose $C=N^{1/3}H^{1/6}+H$ to conclude Theorem \ref{thm:shiftedSum}. 
Note that the assumption $H^2 \leq N^{1-\varepsilon}$ ensures that $N^{1/3} H^{1/6} \leq N^{1/2-\varepsilon}$, so this choice of $C$ is valid.

\section{Proof sketches of the corollaries}\label{sect:CorProofSketch}
\begin{proof}[Proof of Corollary \ref{coro:RankinSelberg} (Rankin-Selberg Problem)]
It suffices to show the bound for the dyadic segment $x/2 < n \leq x$.  Let $w = w_{+}$ be a smooth nonnegative function that is identically $1$ on $[x/2, x]$, and supported on $[x/2-y, x+y]$, with $y = o(x)$ to be chosen later. We can also choose a smooth test function $w_{-}$ that is supported on $[x/2,x]$ which is identically $1$ on $[x/2+y, x-y]$.  Then the characteristic function of $[x/2,x]$ is squeezed between $w_{-}$ and $w_{+}$.  We focus on $w = w_{+}$, but the same ideas work for $w_{-}$ as well.  It is easy to check that 
for $j \geq 1$ that $w^{(j)}(t) \ll y^{-j}$.  
By repeated integration by parts we deduce that the Mellin transform of $w$ satisfies the bound
\begin{equation*}
    |\widetilde{w}(s)| \ll_A \frac{y}{x} x^{\sigma} \Big(1 + \frac{|s| y}{x} \Big)^{-A},
\end{equation*}
for arbitrary $A > 0$, and
assuming say $1/2 \leq \sigma \leq 3/4$. 
By standard Mellin inversion, 
\begin{equation*}
\sum_{x/2 < n \leq x} |\lambda_g(n)|^2
\leq \sum_{n=1}^{\infty} |\lambda_g(n)|^2 w_{+}(n)
= \frac{1}{2 \pi i} \int_{(2)} \widetilde{w}(s) \frac{L(g \otimes g, s)}{\zeta(2s)} ds.
\end{equation*}
Here $L(g \otimes g, s) = L(\mathrm{sym}^2 g, s) \zeta(s)$. We shift contours to the line $1/2+\varepsilon$, crossing a pole at $s=1$ giving rise to the residue $\widetilde{w}(1) \frac{L(\mathrm{sym}^2 g, 1)}{\zeta(2)}$.  Note $\widetilde{w}(1) = x + O(y)$.  On the $1/2+\varepsilon$ line we obtain a bound of the form
\begin{equation}
\label{eq:RankinSelbergIntegralBoundSketch}
\frac{y}{x} x^{1/2+\varepsilon}   \int_{|s| \ll \frac{x^{1+\varepsilon}}{y}} |\zeta(s) L(\mathrm{sym}^2 g, s)| |ds|
\ll \frac{y}{x} x^{1/2+\varepsilon} \Big(\frac{x}{y}\Big)^{1/2} \Big(\Big(\frac{x}{y}\Big)^{4/3}\Big)^{1/2} 
= \frac{x^{2/3+\varepsilon}}{ y^{1/6}},
\end{equation}
using the Cauchy-Schwarz inequality, followed by \eqref{eq:mainthm} and the easily obtained second moment bound for the Riemann zeta function.
The two error terms balance when $y = x^{2/3} y^{-1/6}$, i.e., $y=x^{4/7}$, giving Corollary \ref{coro:RankinSelberg}.
\end{proof}

\begin{proof}[Proof of Corollary \ref{coro:LNQ}]
Lin, Nunes, and Qi \cite[Section 1.5]{LNQ} explicitly point out that any improvement on the second moment leads to a better bound in their problem.  It is a simple matter of bookkeeping to see how Theorem \ref{thm:mainthm} translates into their work.
\end{proof}

\begin{proof}[Proof of Corollary \ref{coro:nonlinear}]
The idea here is very similar to that in the Rankin-Selberg problem.  By Mellin inversion and stationary phase, we have that
\begin{equation*}
    w(n) e(P (n/N)^{\beta})
    \sim P^{-1/2} \int_{|\mathrm{Im}(s)| \asymp P} F(s) n^{-s} ds,
\end{equation*}
for some function $F(s) \ll N^{\sigma}$ with $\sigma = \mathrm{Re}(s)$, and with a very small error term.  Then the sum at hand equals
\begin{equation*}
    P^{-1/2} \int_{|\mathrm{Im}(s)| \asymp P} F(s) \frac{L(g \otimes g, s)}{\zeta(2s)} ds.
\end{equation*}
We shift the contour to the line $\sigma = 1/2 + \varepsilon$.  The pole at $s=1$ is not encountered here since $F$ is very small at this point.  On the new contour we use the same method as in \eqref{eq:RankinSelbergIntegralBoundSketch}, leading to the bound in Corollary \ref{coro:nonlinear}.  
\end{proof}

\printbibliography

\end{document}